\providecommand{\U}[1]{\protect\rule{.1in}{.1in}}
\numberwithin{equation}{section}
\newtheorem{theorem}{Theorem}[section]
\newtheorem{lemma}[theorem]{Lemma}
\newtheorem{corollary}[theorem]{Corollary}
\newtheorem{proposition}[theorem]{Proposition}
\newtheorem{definition}[theorem]{Definition}
\def\<{\langle}
\def\>{\rangle}
\def\d{{\rm d}}
\def\div{{\rm div}}
\def\E{\mathbb{E}}
\def\N{\mathbb{N}}
\def\P{\mathbb{P}}
\def\R{\mathbb{R}}
\def\T{\mathbb{T}}
\def\Z{\mathbb{Z}}
\def\eps{\varepsilon}
\begin{document}

\title{Stochastic inviscid Leray-$\alpha$ model with transport noise:\\ convergence rates and CLT}

\author{Dejun Luo\footnote{Email: luodj@amss.ac.cn}\quad Bin Tang\footnote{Email: tbdsj@amss.ac.cn} \bigskip \\
{\footnotesize Key Laboratory of RCSDS, Academy of Mathematics and Systems Science,}\\
{\footnotesize Chinese Academy of Sciences, Beijing 100190, China} \\
{\footnotesize School of Mathematical Sciences, University of Chinese Academy of Sciences, Beijing 100049, China}}

\maketitle

\vspace{-20pt}

\begin{abstract}
We consider the stochastic inviscid Leray-$\alpha$ model on the torus driven by transport noise. Under a suitable scaling of the noise, we prove that the weak solutions converge, in some negative Sobolev spaces, to the unique solution of the deterministic viscous Leray-$\alpha$ model. This implies that transport noise regularizes the inviscid Leray-$\alpha$ model so that it enjoys approximate weak uniqueness. Interpreting such limit result as a law of large numbers, we study the underlying central limit theorem and provide an explicit convergence rate.
\end{abstract}

\textbf{Keywords:} Leray-$\alpha$ model, transport noise, scaling limit, convergence rate, central limit theorem

\textbf{MSC (2020):} 60H15, 60H50

\section{Introduction}

We consider the inviscid Leray-$\alpha$ model (or the $\alpha$-approximation of Euler equations) on the torus $\T^d= \R^d/\Z^d\ (d=2,3)$, perturbed by multiplicative transport noise:
\begin{equation}\label{stoch-NS-eq}
  \left\{ \aligned
  & \d u +v\cdot \nabla u\,\d t + \d \nabla P = \nabla u \circ \d W , \\
  & u= v + \alpha^{2\gamma_0}(-\Delta)^{\gamma_0} v, \\
  & \nabla\cdot u= \nabla\cdot v=0.
  \endaligned \right.
\end{equation}
Here $u$ stands for the fluid velocity, and $P$ is the turbulent pressure whose precise form will be discussed in Section \ref{subs-precise-model} below; $\Delta$ is the Laplacian operator on $\T^d$, $\alpha> 0$ is a given length-scale, and $\gamma_0> 0$ is the regularizing parameter; $\circ\,\d$ means the Stratonovich stochastic differential and $W= W(t,x)$ is a space-time noise defined on some probability space $(\Omega,\mathcal F, \P)$, white in time, colored and divergence free in space, modelling transport-type perturbations of fluid small scales on larger ones. When $\alpha=0$, the above system reduces to the stochastic Euler equations with transport noise. It is easy to know that, for any $\gamma_0>0$, system \eqref{stoch-NS-eq} admits weak solutions $\{u_t\}_{t\in [0,T]}$ for all initial data $u_0\in H_\sigma$, the latter being the subspace of divergence free vector fields in $L^2(\T^d,\R^d)$ with zero mean; moreover, $\P$-a.s. $\|u_t \|_{L^2} \le \|u_0 \|_{L^2}$ for all $t\in [0,T]$. For fixed $\gamma_0 > \frac{d-2}4$ and suitably chosen noises $W=W^N(t,x),\, N\ge 1$, we will prove that weak solutions $u^N$ of \eqref{stoch-NS-eq} are close to the unique weak solution of the deterministic viscous Leray-$\alpha$ type model
  \begin{equation}\label{determ-NS-alpha}
  \left\{ \aligned
  & \partial_t \tilde u +\tilde v\cdot \nabla \tilde u + \nabla \tilde P = \kappa \Delta \tilde u, \\
  & \tilde u= \tilde v + \alpha^{2\gamma_0} (-\Delta)^{\gamma_0} \tilde v, \\
  & \nabla\cdot\tilde  u= \nabla\cdot \tilde v=0,
  \endaligned \right.
  \end{equation}
where $\kappa>0$ comes from the intensity of noise; we will provide explicit estimate on the distance $\|u^N -\tilde u\|_{C^0_t H^{-a}_x}$ for appropriate $a>0$, in terms of the parameters of noise $W^N(t,x)$. Moreover, interpreting the convergence of $u^N$ to the deterministic limit $\tilde u$ as a law of large numbers, we shall establish a corresponding central limit theorem for the Gaussian type fluctuation $U^N= (u^N -\tilde u)/\sqrt{\epsilon_N}$ with suitable $\epsilon_N\to 0$, identifying the equation fulfilled by the limit $U$ and studying the strong convergence rate of $U^N$ to $U$. Before proceeding to more detailed statements of our main results, we briefly recall some previous works in the literature.

We start from the deterministic setting. To show the existence of solutions to the classical Navier-Stokes equations, i.e. $\gamma_0=0$ in \eqref{determ-NS-alpha}, Leray considered in the seminal work \cite{Leray33} the following approximating equations:
  \begin{equation*}\label{determ-NS-alpha-1}
  \left\{ \aligned
  & \partial_t u + v^\alpha \cdot \nabla u + \nabla P = \kappa \Delta u, \\
  & v^\alpha = \phi_\alpha\ast u, \\
  & \nabla\cdot u= 0,
  \endaligned \right.
  \end{equation*}
where $\phi_\alpha$ are smoothing kernels. Inspired by the Lagrangian averaged Navier-Stokes-$\alpha$ (LANS-$\alpha$) model of turbulence (also known as the viscous Camassa-Holm equations, see e.g. \cite{CFHOTW98, FHT02}), Cheskidov et al. \cite{CHOT05} adopted the smoothing operator associated with the Green function of the Helmholtz operator:
  $$v^\alpha= (1-\alpha^2\Delta )^{-1} u, $$
and they called the new system as the Leray-$\alpha$ model; here, $\alpha$ is considered as a given length scale. It turns out that, for a wide range of Reynolds numbers, this model compares successfully with empirical data from turbulent channel and pipe flows. In the subsequent paper \cite{OT07}, Olson and Titi considered a family of equations which interpolates between the Navier-Stokes equations and the LANS-$\alpha$ model:
  $$  \left\{ \aligned
  & \partial_t u - v^\alpha \times(\nabla\times u) + \nabla P = -\nu\, (-\Delta u)^{\gamma_1} u + f, \\
  & v^\alpha = \big(1+ (-\alpha^2 \Delta)^{\gamma_0} \big)^{-1} u, \\
  & \nabla\cdot u= \nabla\cdot v^\alpha=0.
  \endaligned \right. $$
Note that if $\gamma_1=1$ and $\gamma_0=0$, then the above system reduces to the Navier-Stokes equations, while if $\gamma_1= \gamma_0=1$, it is the LANS-$\alpha$ model. They proved that if $\nu>0$ and $2\gamma_0 + 4\gamma_1 \ge 5$, or if $\nu=0$ and $2\gamma_0 \ge 5$, then the above system admits unique global smooth solutions depending continuously on the initial data. We refer to the introductions of \cite{CHOT05, OT07} for more discussions on the background and related models for turbulence, and \cite{Tao09, Yam12, BMR14, BF17} for studies on more general models. In particular, a special case of \cite[Theorem 1.2]{BMR14} shows that if $\gamma_0\ge (d-2)/4$, then \eqref{determ-NS-alpha} admits a global smooth solution for every smooth initial condition.

There have also been numerous studies of Leray-$\alpha$ type models in the stochastic setting. An early work by Deugoue and Sango \cite{DS09} established the existence of probabilistic weak solutions to the stochastic 3D Navier-Stokes-$\alpha$ model and discussed also the uniqueness. Chueshov and Millet \cite{CM10} studied an abstract stochastic evolution equation of the form
  $$\d u + (Au + B(u,u) + R(u))\,\d t = \sqrt{\eps} \, \sigma(t,u)\,\d W_t $$
in a Hilbert space; the conditions on the linear operator $A$, the bilinear mapping $B$ and the operator $R$ cover many 2D fluid dynamical models, as well as the 3D Leray-$\alpha$ model and some shell models of turbulence. We refer to \cite{BHR15, FHR16} for results on the well-posedness, irreducibility and exponential mixing of stochastic 3D Leray-$\alpha$ model driven by pure jump noise, to \cite{BR16} for $\alpha$-approximation of stochastic Leray-$\alpha$ model to the stochastic Navier-Stokes equations, to \cite{HLL21} for asymptotic log-Harnack inequality and ergodicity of 3D Leray-$\alpha$ model with degenerate type noise. We also mention that Barbato et al. \cite{BBF14} considered the stochastic inviscid Leray-$\alpha$ model \eqref{stoch-NS-eq} with $\gamma_0=1$, and proved that suitable transport noise restores uniqueness in law of weak solutions.

Stochastic fluid dynamical equations with transport noise have been studied for a long time, see e.g. \cite{BCF92, MikRoz04, MikRoz05} for some early works; recently there are some rigorous justifications of such noise in fluid equations, cf. \cite{Holm15} for arguments based on variational principles, \cite{FlaPap21, FlaPap22} for results in 2D case and also \cite{DP22} where the examples include 3D Navier-Stokes equations and primitive equations. In several papers \cite{BFM16, FL19, BM19, LC22}, well-posedness of the stochastic 2D Euler equations in vorticity form and driven by transport noise was established for initial data of various regularity; see \cite{GoodCris} for studies on the Navier-Stokes equations with stochastic Lie transport. In recent years, there have been intensive investigations \cite{Galeati20, FGL21a, Luo21a} on scaling limits of SPDEs with transport noise to deterministic limit equations with an additional viscous term, cf. the earlier work \cite{FL20} where the limit equation is driven by additive space-time white noise. The enhanced dissipation was used in \cite{FL21} to suppress possible blow-up of vorticity for the 3D Navier-Stokes equations, see also \cite{FGL21b} for related results on more general nonlinear equations and \cite{Agresti22} on systems of reaction-diffusion equations. The weak convergence results of \cite{Galeati20, FGL21a, Luo21a} have been improved in \cite{FGL21c} by establishing quantitative convergence rates; the method makes use of mild formulations of both approximating and limit equations, and it works as well for producing explicit estimates on blow-up probabilities of solutions to various stochastic equations with transport noise, see \cite[Theorem 1.5]{FGL21c} for the 2D Keller-Segel model and \cite[Theorem 2.4]{Luo21b} for the vorticity form of 3D Navier-Stokes equations. There are also results on dissipation enhancement by  transport noise for heat equations in bounded domains \cite{FGL21d} or infinite channels \cite{FlaLuongo22}. In the recent paper \cite{GL22}, we studied the large deviation principle (LDP) and central limit theorem (CLT) associated with the above scaling limit results; in particular, using the approach of mild formulations, we were able to find the strong convergence rate for the CLT. Similar results were obtained in \cite{LW22} for a class of stochastic dyadic models, where the noise is allowed to transfer energy among distant components. For related studies on other models with transport noise, we refer to \cite{Lange} for an averaged version of the Navier-Stokes equations which was motivated by \cite{Tao16}, and \cite{CarLuon} for a two-layer quasi-geostrophic model.

The purpose of this work is to establish similar results for the stochastic inviscid Leray-$\alpha$ model \eqref{stoch-NS-eq} with transport noise. In particular, as a direct consequence of Theorem \ref{thm-main} below, one can easily prove an analogue of \cite[Corollary 1.4]{FGL21c}, namely, the Wasserstein distance between probability laws of weak solutions to \eqref{stoch-NS-eq} vanishes in the limit. This result implies that transport noise regularizes asymptotically the Leray-$\alpha$ model so that it enjoys ``approximate weak uniqueness'' of solutions; see \cite[Section 6.1]{FGL21a} for related discussions.

In the rest of this section, we describe the precise equations we are going to study in the paper and give more precise statements of the main results. The exact value of $\alpha>0$ is not important and we will fix it to be 1 in the sequel.

\subsection{The precise model} \label{subs-precise-model}

As in \cite{Galeati20, FL21}, the space-time noise used to perturb the equations has the form
  \begin{equation}\label{noise-1}
  W(t, x)= \sqrt{C_{d} \kappa}\, \sum_{k \in \Z_0^d} \sum_{i=1}^{d-1} \theta_{k} \sigma_{k, i}(x) W^{k, i}_{t} ,
  \end{equation}
where $C_d=d/(d-1)$ is a normalizing constant, $\kappa>0$ is the noise intensity, and $\Z_0^d$ is the set of nonzero lattice points; $\{\theta_k\}$ are coefficients of noise, $\{\sigma_{k,i}\}$ are divergence free vector fields on $\T^d$ and $\{W^{k,i}\}$ are independent complex Brownian motions, see Section \ref{subs-noise} for more details. We shall write $\sum_{k \in \Z_0^d} \sum_{i=1}^{d-1}$ simply as $\sum_{k,i}$ in the sequel.

Inspired by discussions in \cite[Section 2.1]{MikRoz04}, we set the pressure term in \eqref{stoch-NS-eq} as follows:
\begin{equation*}
  \d \nabla P= \nabla P_1\,\d t + \sum_{k,i} \nabla P_2^{k,i} \circ \d W^{k, i}_{t};
\end{equation*}
as mentioned at the beginning, we take $\alpha=1$, and thus SPDE \eqref{stoch-NS-eq} can be rewritten as
\begin{equation}\label{stoch-NS-eq2}
  \left\{ \aligned
  & \d u +(v\cdot \nabla u + \nabla P_1)\,\d t = \sqrt{C_{d} \kappa}\, \sum_{k,i} \theta_{k} \sigma_{k, i} \cdot \nabla u \circ \d W^{k,i}_t - \sum_{k,i} \nabla P^{k,i}_2 \circ \d W^{k,i}_t , \\
  & u= v + (-\Delta)^{\gamma_0} v, \\
  & \nabla\cdot u= \nabla\cdot v=0.
  \endaligned \right.
\end{equation}
We see that the pressures $P_1,P_2$ should satisfy the following equations:
\begin{equation}\label{eq-pressures}
  \left\{ \aligned
    & \nabla\cdot (v\cdot \nabla u) + \Delta P_1 = 0 ,\\
    & \sqrt{C_{d} \kappa} \, \theta_{k} \nabla \cdot \left( \sigma_{k,i} \cdot \nabla u \right) - \Delta P_2^{k,i}= 0, \quad \forall\, k \in \Z_0^{d}, \,  i=1,2,\dots ,d-1.
  \endaligned \right.
\end{equation}
Applying the Helmholtz-Leray projector $\Pi$ to the first equation of \eqref{stoch-NS-eq2} yields
  $$ \d u + \Pi(v\cdot \nabla u)\,\d t =\sqrt{C_{d} \kappa} \, \sum_{k,i} \theta_{k} \Pi(\sigma_{k,i} \cdot \nabla u) \circ \d W^{k,i}_t; $$
it has the equivalent It\^o form:
\begin{equation*}
  \d u + \Pi(v\cdot \nabla u)\,\d t =\sqrt{C_{d} \kappa} \, \sum_{k,i} \theta_{k} \Pi(\sigma_{k,i} \cdot \nabla u) \, \d W^{k,i}_t + S^{(d)}_{\theta}(u) \, \d t,
\end{equation*}
where the Stratonovich-It\^o corrector $S_\theta^{(d)}(\cdot)$ is defined as
\begin{equation} \label{def-Stratonovich-Ito-correctors}
  S_\theta^{(d)}(u) = C_d \kappa \sum_{k,i} \theta_{k}^2 \Pi \left[ \sigma_{k,i} \cdot \nabla \Pi \left( \sigma_{-k,i} \cdot \nabla u \right)  \right].
\end{equation}
So the equations considered in this paper can be written as
  \begin{equation}\label{stoch-NS-eq3}
  \left\{ \aligned
  & \d u + \Pi(v\cdot \nabla u)\,\d t =\sqrt{C_{d} \kappa} \, \sum_{k,i} \theta_{k} \Pi(\sigma_{k,i} \cdot \nabla u) \, \d W^{k,i}_t + S^{(d)}_{\theta}(u) \, \d t, \\
  & u= v + (-\Delta)^{\gamma_0} v.
  \endaligned \right.
  \end{equation}
For any $\gamma_0>0$ and $u_0\in H_\sigma$ (see Section \ref{sec-notations-noise} for its definition), the existence of (probabilistically and analytically) weak solutions to \eqref{stoch-NS-eq3} will be briefly proved in Appendix \ref{sec-appendix-A}; once we have a solution $u$ to this system, it is possible to recover the pressure terms $P_1$ and $P_2^{k,i}$ through the equations \eqref{eq-pressures}.
If $\gamma_0\geq\frac{d+2}{4}$, the deterministic inviscid Leray-$\alpha$ model is already globally well posed for smooth initial data, hence we always assume $\gamma_0 < \frac{d+2}{4}$ in \eqref{stoch-NS-eq3}.

In the sequel, we will often write only the first equation in \eqref{stoch-NS-eq3}, and keep in mind that $v=K(u)$, where the linear operator $K:H_{\sigma} \mapsto H_{\sigma}$ is defined as
$$ K(u):=(1+ (-\Delta)^{\gamma_0})^{-1} u=\sum_{k \in \Z_0^d} \frac{\hat{u}_k}{1+|2 \pi k|^{2 \gamma_0}} e_k,$$
where $\{\hat{u}_k\}_{k \in \Z_0^d}$ are the Fourier coefficients of $u$. Then for any $s \in \R $, we have
$$ \|K(u)\|_{H^s}^2 =\sum_{k \in \Z_0^d} |k|^{2s} \Big(\frac{ |\hat{u}_k|}{1+|2 \pi k|^{2 \gamma_0}} \Big)^2 \leq \sum_{k \in \Z_0^d} |k|^{2(s-2\gamma_0)} |\hat{u}_k|^2 = \|u\|_{H^{s-2\gamma_0}}^2 .$$

\subsection{Main results}

The main results proved in the paper consist of (i) quantitative convergence rates of stochastic Leray-$\alpha$ model of Euler equations to the deterministic Leray-$\alpha$ model for Navier-Stokes equations, and (ii) a CLT type result concerning the Gaussian type fluctuations underlying the scaling limit in (i).

To this end, we choose in \eqref{noise-1} a special sequence of noise coefficients $\{\theta^N \}_N \subset \ell^2(\Z^d_0)$:
\begin{equation} \label{theta-N-def}
  \theta^N_k = \sqrt{\epsilon_N} \frac{1}{|k|^\gamma} {\bf 1}_{\{1  \leq |k|\leq N\}}, \quad k\in \Z^d_0 ,
\end{equation}
where $0< \gamma < \frac{d}{2}$ and
\begin{equation}\label{def-epsilon}
  \epsilon_N := \bigg(\sum_{1 \leq |k| \leq N} \frac{1}{|k|^{2\gamma}} \bigg)^{-1} \sim  N^{2 \gamma-d}.
\end{equation}
Note that this choice is different from those in \cite{FL21, Luo21b}, where we assumed, for any fixed $\gamma>0$,
  $$\theta^N_k = \sqrt{\epsilon_N} \frac{1}{|k|^\gamma} {\bf 1}_{\{N  \leq |k|\leq 2N\}}, \quad k\in \Z^d_0 $$
for a corresponding $\epsilon_N$. We stress that the choice \eqref{theta-N-def} is necessary for proving CLT type result (clearly the upper bound $N$ is not essential); however, allowing lower modes of noise makes it more difficult to show the convergence rates of Stratonovich-It\^o correctors $S^{(d)}_{\theta^N}(\cdot)$, see the proof of Theorem \ref{thm-Ito-corrector} in  Appendix \ref{sec-appendix-B}.

Let $u^N$ be a weak solution to \eqref{stoch-NS-eq3} with $\theta= \theta^N$, namely,
  \begin{equation} \label{eq-inviscid-Leray}
  \d u^N + \Pi(v^N\cdot \nabla u^N)\,\d t =\sqrt{C_{d} \kappa} \, \sum_{k,i} \theta^N_{k} \Pi(\sigma_{k,i} \cdot \nabla u^N) \, \d W^{k,i}_t + S^{(d)}_{\theta^N}(u^N) \, \d t,
  \end{equation}
where $v^N = K(u^N)$ and $u^N_0=u_0 \in H_\sigma$. Thanks to Theorem \ref{thm-Ito-corrector}, we expect that the solutions $\{u^N\}_{N\ge 1}$ are close to the solution of the deterministic viscous Leray-$\alpha$ model:
\begin{equation} \label{regular-limit-eq}
  \partial_t \tilde u + \Pi(\tilde v\cdot \nabla\tilde u )= C_d^{\prime} \kappa\Delta \tilde u
\end{equation}
with the same initial condition $\tilde u_0 =u_0 \in H_\sigma$; here $\tilde v= K(\tilde u)$ and
  \begin{equation} \label{C-d-prime}
  C_d^{\prime}= \begin{cases}\frac{1}{4}, & d=2, \\
  \frac{3}{5}, & d=3.
  \end{cases}
  \end{equation}
This strange constant is due to the Helmholtz-Leray projection $\Pi$ in the corrector $S^{(d)}_{\theta^N}(\cdot)$, see Theorem \ref{thm-Ito-corrector}; if there were no $\Pi$, then $C'_d$ would be 1 in both cases. As mentioned above, for $\gamma_0\ge \frac{d-2}4$, equation \eqref{regular-limit-eq} admits a unique solution $\tilde u$ satisfying the energy estimate
  \begin{equation} \label{energy-ineq}
  \|\tilde u_t \|_{L^2}^2 + \kappa \int_0^t \|\nabla \tilde u_s \|_{L^2}^2\,\d s \lesssim \|u_0 \|_{L^2}^2 \quad \mbox{for all } t>0.
  \end{equation}
We remark that, although \eqref{regular-limit-eq} is well posed also for $\gamma_0= \frac{d-2}4$, due to technical reasons, we will work under the condition $\gamma_0> \frac{d-2}4$, see Theorems \ref{thm-main} and \ref{thm:CLT} below.

Now we are ready to state the first main result which will be proved in Section \ref{sec-proof-converg-rates}.

\begin{theorem}[Quantitative convergence rates]\label{thm-main}
Fix $\gamma_0 \in (\frac{d-2}{4},\frac{d+2}{4})$, $\alpha\in (0,(2\gamma_0) \wedge 1)$ and $q>\max\{2,\frac{4}{4\gamma_0-d+2}\}$. Then for any $T<\infty$ there exists a constant $C=C(T,q,\gamma_0)>0$ such that for any $\delta \in (0,\alpha)$ it holds
  $$\E \bigg[\sup_{t\leq T} \|u^N_t -\tilde u_t\|_{H^{-\alpha}}^q \bigg] \lesssim \kappa^{q\delta/2} \epsilon_N^{q(\alpha-\delta)/d} \|u_0\|_{L^2}^q \exp\bigg[ C \frac{\|u_0 \|_{L^2}^q}{\kappa^{q}} (\kappa^{q (\gamma_0-\frac{d-2}{4})} T+1) \bigg]. $$
Furthermore, if $\gamma >\frac{d-2}{2}$, we have a slightly better estimate:
$$\E \bigg[\sup_{t\leq T} \|u^N_t -\tilde u_t\|_{H^{-\alpha}}^q \bigg] \lesssim \kappa^{q\delta/2} \epsilon_N^{q(\alpha-\delta)/2} \|u_0\|_{L^2}^q \exp\bigg[ C \frac{\|u_0 \|_{L^2}^q}{\kappa^{q}} (\kappa^{q (\gamma_0-\frac{d-2}{4})} T+1) \bigg]. $$
\end{theorem}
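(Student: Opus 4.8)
The plan is to compare $u^N$ and $\tilde u$ through their mild formulations, following the strategy of \cite{FGL21c, GL22}. Write $w^N = u^N - \tilde u$ and let $\Lambda = (1 + (-\Delta)^{\gamma_0})$, so that $v = K(u) = \Lambda^{-1} u$. Using Duhamel's formula with the semigroup $e^{t C_d' \kappa \Delta}$ generated by the limiting viscous operator, the difference $w^N$ satisfies
\begin{equation*}
  w^N_t = \int_0^t e^{(t-s) C_d' \kappa \Delta}\, \big[ \Pi(\tilde v_s \cdot \nabla \tilde u_s) - \Pi(v^N_s \cdot \nabla u^N_s) \big]\,\d s + M^N_t + R^N_t,
\end{equation*}
where $M^N_t = \sqrt{C_d\kappa}\sum_{k,i}\theta^N_k \int_0^t e^{(t-s)C_d'\kappa\Delta}\,\Pi(\sigma_{k,i}\cdot\nabla u^N_s)\,\d W^{k,i}_s$ is the stochastic convolution and $R^N_t = \int_0^t e^{(t-s)C_d'\kappa\Delta}\,\big[S^{(d)}_{\theta^N}(u^N_s) - C_d'\kappa\Delta u^N_s\big]\,\d s$ is the corrector-remainder term. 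The first step is to estimate $M^N$ and $R^N$ separately: for $R^N$, I would invoke Theorem \ref{thm-Ito-corrector} (the quantitative estimate on the Stratonovich--It\^o corrector) to bound the operator $S^{(d)}_{\theta^N} - C_d'\kappa\Delta$ in an appropriate negative Sobolev norm by something like $\epsilon_N^{\beta}$ times $\|u^N_s\|_{L^2}$ (with $\beta$ depending on $\gamma$ and $d$, giving the two cases $\alpha/d$ vs.\ $\alpha/2$ in the statement), then use the a priori bound $\|u^N_s\|_{L^2}\le\|u_0\|_{L^2}$ together with smoothing of the semigroup to convert spatial regularity into a small prefactor; for $M^N$, a Burkholder--Davis--Gundy / maximal inequality argument combined with the It\^o isometry and the normalization $\sum_k (\theta^N_k)^2 = 1$ reduces the quadratic variation to $\kappa\,\epsilon_N$-type smallness, again using $\|u^N_s\|_{L^2}\le\|u_0\|_{L^2}$. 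The factor $\kappa^{\delta/2}$ and the interpolation loss $\delta\in(0,\alpha)$ come from trading off semigroup smoothing (which costs negative powers of $\kappa$) against the gain in $\epsilon_N$.

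The second step handles the nonlinear drift. I would write $\Pi(\tilde v\cdot\nabla\tilde u) - \Pi(v^N\cdot\nabla u^N) = \Pi(\tilde v\cdot\nabla w^N) + \Pi((\tilde v - v^N)\cdot\nabla u^N)$, and estimate each piece in $H^{-\alpha}$ using product/commutator estimates and the smoothing $\|e^{t C_d'\kappa\Delta}f\|_{H^{-\alpha}}\lesssim (\kappa t)^{-(1-\delta+\alpha')/2}\|f\|_{H^{-\alpha-1+\delta-\alpha'}}$ type bounds, exploiting crucially that $v = K(u) = \Lambda^{-1}u$ gains $2\gamma_0$ derivatives over $u$ (this is where $\gamma_0 > \frac{d-2}{4}$ enters, ensuring $v^N$ lies in a space good enough to close the estimate against the merely-$L^2$ bound on $u^N$). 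The bilinear term $\Pi(\tilde v\cdot\nabla w^N)$ is the one that must be absorbed via Gr\"onwall; it produces a factor involving $\|\tilde v\|$ in some norm, which by the energy inequality \eqref{energy-ineq} is controlled in $L^2_tH^1_x$ (after the $\Lambda^{-1}$ gain), contributing the $\kappa^{-1}$ and $T$-dependence inside the exponential. The $\kappa^{q(\gamma_0 - \frac{d-2}{4})}T$ term in the exponent reflects the extra room available when $\gamma_0$ exceeds the critical threshold, which lets one afford a slightly worse time-integrability of the coefficient at the cost of a $\kappa$-power.

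The third step is to assemble: raising to the $q$-th power, applying $\sup_{t\le T}$, using the maximal inequality for the stochastic term and H\"older in time for the deterministic terms, one obtains an integral inequality of the form $\E\sup_{s\le t}\|w^N_s\|_{H^{-\alpha}}^q \le (\text{small term}) + C\int_0^t \varphi(s)\,\E\sup_{r\le s}\|w^N_r\|_{H^{-\alpha}}^q\,\d s$ with $\int_0^T\varphi\lesssim \frac{\|u_0\|_{L^2}^q}{\kappa^q}(\kappa^{q(\gamma_0 - \frac{d-2}{4})}T + 1)$, and Gr\"onwall's lemma yields the claimed bound; the requirement $q > \max\{2, \frac{4}{4\gamma_0 - d + 2}\}$ is exactly what makes the time-singularities from the semigroup integrable after raising to the power $q$ and applying H\"older. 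The main obstacle I anticipate is the nonlinear estimate in step two: because $u^N$ is controlled only in $L^\infty_t L^2_x$ (no gain of regularity, since the approximating equation is inviscid), one has very little room, and the whole argument hinges on the $2\gamma_0$-smoothing in $v = K(u)$ being enough — combined with the parabolic smoothing of the limit semigroup — to make the product $v^N\cdot\nabla u^N$ land in a negative Sobolev space where the Duhamel integral converges; getting the exponents to line up (the interplay of $\gamma_0$, $\alpha$, $\delta$, $q$ and the singularity exponent of the semigroup) is the delicate bookkeeping, and handling the corrector $S^{(d)}_{\theta^N}$ when low modes of the noise are present (per the remark after \eqref{def-epsilon}) is the other genuinely technical point, deferred to Theorem \ref{thm-Ito-corrector}.
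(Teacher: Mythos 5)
Your proposal follows the same route as the paper: a mild-formulation Duhamel decomposition into the nonlinear drift, a corrector remainder (handled via Theorem~\ref{thm-Ito-corrector} plus semigroup smoothing), and a stochastic convolution (handled by Burkholder--Davis--Gundy/maximal estimates), with the drift split as $\tilde v\cdot\nabla w^N + (\tilde v-v^N)\cdot\nabla u^N$, use of the $2\gamma_0$-gain of $K$ against the bare $L^2$ control of $u^N$, and a Gr\"onwall step whose time-integrability constraints produce the stated lower bound on $q$ and the exponential factor. The one small inaccuracy is that you attribute the two rates $\epsilon_N^{(\alpha-\delta)/d}$ vs.\ $\epsilon_N^{(\alpha-\delta)/2}$ to the corrector remainder alone, whereas the improved rate for $\gamma>\tfrac{d-2}{2}$ requires a sharper bound on \emph{both} the corrector (via Proposition~\ref{pro-D_N}) and the stochastic convolution (Proposition~\ref{prop-stoch-convol-1}).
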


Note that the second estimate improves the first one in the 3D case since the denominator $d$ in the exponent of $\epsilon_N$ is replaced by 2. We point out that we have assumed for simplicity that $u^N_0 =u_0$ for all $N\ge 1$; in general, if $\sup_{N\geq 1}\|u^N_0\|_{L^2} \leq R$ and $u^N_0 \in H_{\sigma}$ converges weakly to some $u_0$, then the right-hand side of the first estimate should be replaced by
  $$\Big(\|u^N_0- u_0\|_{H^{-\alpha}}^q + \kappa^{q\delta/2} \epsilon_N^{q(\alpha-\delta)/d} R^q \Big) \exp\bigg[ C \frac{R^q}{\kappa^{q}} (\kappa^{q (\gamma_0-\frac{d-2}{4})} T+1) \bigg]. $$
Similar change applies to the second estimate.

Next we turn to study the Gaussian type fluctuation
  $$U^N:= \frac{u^N- \tilde u}{\sqrt{\epsilon_N}}, \quad N\ge 1;$$
one easily sees that $U^N$ satisfies the following equation in a weak sense:
  \begin{equation*}
  \aligned
  \d U^N + \Pi \left( V^N \cdot \nabla u^N + \tilde{v} \cdot \nabla U^N \right) \, \d t
  &= C_d^{\prime}\kappa \Delta U^N \, \d t + \frac{1}{\sqrt{\epsilon_N}} \bigl(  S_{\theta^N}^{(d)} (u^N)-C_d^{\prime}\kappa \Delta u^N \bigr) \, \d t \\
   &\quad + \sqrt{C_d \kappa} \sum_{|k| \leq N} \sum_{i=1}^{d-1} \frac{1}{|k|^{\gamma}} \Pi \bigl( \sigma_{k,i} \cdot \nabla u^N \bigr)\, \d W_{t}^{k,i},
  \endaligned
  \end{equation*}
where $V^N = K(U^N)= (v^N-\tilde{v} ) / \sqrt{\epsilon_N}$. Letting $N \to \infty$, any limit point $U$ of $U^N$ is expected to solve
  \begin{equation*}
  \d U  + \Pi \left( V \cdot \nabla \tilde{u} + \tilde{v} \cdot \nabla U \right) \d t =C_d^{\prime} \kappa \Delta U \, \d t  + \sqrt{C_d \kappa} \sum_{k,i} \frac{1}{|k|^{\gamma}} \Pi(\sigma_{k,i}\cdot \nabla \tilde{u} )\, \d W_{t}^{k,i}
  \end{equation*}
with $U_0=0$; here $V = K(U)$ and $\tilde v= K(\tilde u)$. Recall that $\tilde u$ is the unique solution to \eqref{regular-limit-eq} with $\tilde u_0 = u_0$. It is not difficult to establish the existence and uniqueness of probabilistically strong solutions to the limit equation, see Corollary \ref{cor:CLT-limit-eq} below. This fact is important since it enables us to define the solution $U$ on the same probability space as for $U^N$, the latter being only a weak solution. As a consequence, we can estimate the convergence rate of $U^N$ to $U$ in suitable negative Sobolev norm, by following some ideas in \cite[Section 3.2]{GL22}. It turns out to be quite complicated to treat 2D and 3D cases at the same time, due to the presence of several parameters like dimension $d$, regularizing parameter $\gamma_0$, and also $\gamma$ in \eqref{theta-N-def}. We are mainly interested in the 3D case and want to keep $\gamma_0$ as small as possible, therefore, we restrict $\gamma$ in a slightly special range.

\begin{theorem}[Central limit theorem]\label{thm:CLT}
Let $d=3$, $u_0 \in H_\sigma$ be given, $N \in\N$ and $u^N$ be a weak solution to \eqref{eq-inviscid-Leray}; define $U^N,\,U$ as above. We assume
  $$ \frac{1}{4}<\gamma_0 < \frac{5}{4} \quad \mbox{and}\quad 1<\gamma<\frac{3}{2}. $$
For any $\alpha_0 \in \big(\frac{1}{2}, 1 \wedge (2\gamma_0) \big)$, $q > \max \{ 2, \frac{4}{4\gamma_0-1} \}$ and $\eps>0$ small enough, it holds
  \begin{equation} \label{eq:rate-CLT-3D}
    \sup_{t\in [0,T]} \E \big[\| U^N_t-U_t\|_{H^{-\alpha_0}}^q \big] \lesssim N^{-q \frac{3-2\gamma}{2}(\alpha_0- \frac12)+\eps}.
  \end{equation}
\end{theorem}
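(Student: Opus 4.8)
The plan is to derive a closed energy-type inequality for the difference $D^N_t := U^N_t - U_t$ in the negative Sobolev norm $H^{-\alpha_0}$, following the mild-formulation strategy of \cite[Section 3.2]{GL22}. Subtracting the (weak) equation for $U$ from that for $U^N$, the difference $D^N$ solves, with $D^N_0 = 0$,
\begin{equation*}
\aligned
\d D^N + \Pi\bigl(V^N\cdot\nabla u^N + \tilde v\cdot\nabla U^N - V\cdot\nabla\tilde u - \tilde v\cdot\nabla U\bigr)\,\d t
&= C_d'\kappa\,\Delta D^N\,\d t + \mathcal R^N\,\d t + \d M^N_t,
\endaligned
\end{equation*}
where $\mathcal R^N := \epsilon_N^{-1/2}\bigl(S^{(d)}_{\theta^N}(u^N) - C_d'\kappa\Delta u^N\bigr)$ is the corrector remainder and $\d M^N_t$ collects the martingale terms
$\sqrt{C_d\kappa}\sum_{k,i}|k|^{-\gamma}\Pi\bigl(\sigma_{k,i}\cdot\nabla(u^N-\tilde u)\bigr)\,\d W^{k,i}_t$. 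I would rewrite the transport difference as a sum of a ``good'' part controlled by $\tilde v$ acting on $D^N$, a part involving $V^N\cdot\nabla D^N$ (where $V^N = K(U^N)$ has $H^1$-type bounds coming from the uniform energy estimate $\|u^N_t\|_{L^2}\le\|u_0\|_{L^2}$), and a ``source'' part of the form $\Pi(V^N\cdot\nabla\tilde u) - \Pi(V\cdot\nabla\tilde u)$ plus $\Pi((v^N-\tilde v)\cdot\nabla U^N)/\sqrt{\epsilon_N}$-type leftover terms that are either proportional to $D^N$ or genuinely small. Applying $(-\Delta)^{-\alpha_0/2}$ and Itô's formula to $\|D^N_t\|_{H^{-\alpha_0}}^2$ (and then, to reach the $q$-th moment with $q>2$, applying Itô to the $q/2$ power, as in the proof of Theorem \ref{thm-main}) produces a dissipation term $-C_d'\kappa\|D^N\|_{H^{1-\alpha_0}}^2$, a quadratic variation term of order $\kappa\sum_{k}|k|^{-2\gamma}\|\sigma_{k,i}\cdot\nabla(u^N-\tilde u)\|_{H^{-\alpha_0-?}}^2$, and the drift contributions.

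The heart of the argument is to absorb or bound each term. The transport nonlinearities: using $\Pi$ boundedness on $L^p$, the product estimates in negative Sobolev spaces, and the smoothing $\|K(w)\|_{H^s}\le\|w\|_{H^{s-2\gamma_0}}$ stated in the excerpt, the terms $\langle(-\Delta)^{-\alpha_0}\Pi(\tilde v\cdot\nabla D^N),D^N\rangle$ and $\langle(-\Delta)^{-\alpha_0}\Pi(V^N\cdot\nabla D^N),D^N\rangle$ should be controllable by $\epsilon\|D^N\|_{H^{1-\alpha_0}}^2 + C(\|\tilde u\|,\|u^N\|)\|D^N\|_{H^{-\alpha_0}}^2$ — here the condition $\gamma_0>\frac14$ (i.e. $\gamma_0>\frac{d-2}4$ with $d=3$) is exactly what makes $2\gamma_0$ large enough to win a derivative back, and the constraint $\alpha_0>\frac12$ ensures the bilinear form on $H^{-\alpha_0}\times H^{1-\alpha_0}$ closes in $d=3$ (Sobolev embedding $H^{1-\alpha_0}\hookrightarrow$ the right Lebesgue space only for $\alpha_0$ above the critical threshold). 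The source term $\Pi(V^N\cdot\nabla\tilde u)-\Pi(V\cdot\nabla\tilde u) = \Pi(V^N\cdot\nabla\tilde u) - \Pi(V\cdot\nabla\tilde u)$ is linear in $D^N$ through $V^N - V = K(D^N)$, so it too is bounded by $C\|\tilde u\|_{H^1}\|D^N\|_{H^{-\alpha_0}}\|D^N\|_{H^{1-\alpha_0}}$ after using the smoothing of $K$. The genuinely inhomogeneous inputs are: (i) the corrector remainder $\mathcal R^N$, for which Theorem \ref{thm-Ito-corrector} / the estimates behind Theorem \ref{thm-main} give $\|\mathcal R^N\|_{H^{-1-\alpha_0}}\lesssim\epsilon_N^{?}\|u^N\|$-type decay — this is where the new choice \eqref{theta-N-def} with lower modes included, and the restriction $1<\gamma<\frac32$, enters, furnishing a rate $N^{-(3-2\gamma)/2\,\cdot\,(\alpha_0-1/2)+\epsilon}$; and (ii) the martingale bracket, whose expectation after the $\|u^N-\tilde u\|\lesssim\sqrt{\epsilon_N}$-type control from Theorem \ref{thm-main} contributes at the same order. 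I would collect these into $\frac{\d}{\d t}\E\|D^N_t\|_{H^{-\alpha_0}}^q \le C\,\E\|D^N_t\|_{H^{-\alpha_0}}^q + (\text{error}_N)$ with a time-dependent constant $C$ that is integrable on $[0,T]$ thanks to \eqref{energy-ineq} and $\int_0^T\|\nabla\tilde u_s\|_{L^2}^2\,\d s<\infty$, and then Grönwall's inequality with $D^N_0=0$ yields the claimed bound \eqref{eq:rate-CLT-3D}.

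Two points I expect to be the main obstacles. First, and most seriously, controlling the corrector remainder $\mathcal R^N$ in the correct negative-regularity norm: unlike in \cite{FL21,Luo21b}, the coefficients \eqref{theta-N-def} are supported on all modes $1\le|k|\le N$, so the cancellation identity in Theorem \ref{thm-Ito-corrector} leaves a remainder whose high-frequency part decays like a power of $N$ but whose handling requires a careful Littlewood–Paley / Fourier decomposition — the gain $N^{-(3-2\gamma)/2}$ per derivative (rather than $N^{-1}$) is precisely the price of the low modes, and matching the exponent $(\alpha_0-\frac12)$ means tracking exactly how many derivatives one can afford to lose given $\alpha_0\in(\frac12,1\wedge2\gamma_0)$ and $q>\frac{4}{4\gamma_0-1}$. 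Second, closing the transport estimates uniformly in $N$ requires the a priori bound on $U^N$ in $L^q_\Omega C^0_t H^{-\alpha_0}_x$ (not just $u^N$), which one gets by running the same energy argument on the $U^N$-equation alone, using $\|V^N\|_{H^1}\lesssim\|U^N\|_{H^{1-2\gamma_0}}$ and the uniform $L^2$ bound on $u^N$; I would establish this as a lemma first. Everything else — the $q$-th moment Itô expansion, the Burkholder–Davis–Gundy step for the martingale, and the final Grönwall — is routine and parallels the proof of Theorem \ref{thm-main}.
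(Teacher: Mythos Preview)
Your proposal has a genuine methodological gap. Although you open by invoking ``the mild-formulation strategy of \cite[Section 3.2]{GL22}'', the actual plan you describe is an It\^o/energy argument: apply $(-\Delta)^{-\alpha_0/2}$, compute $\d\|D^N_t\|_{H^{-\alpha_0}}^2$, absorb transport terms into the dissipation $-C_d'\kappa\|D^N\|_{H^{1-\alpha_0}}^2$, and close with Gr\"onwall. This does not work in the parameter range of the theorem. The martingale part of $\d D^N_t$ contains the term $\sqrt{C_d\kappa}\sum_{|k|>N}|k|^{-\gamma}\Pi(\sigma_{k,i}\cdot\nabla\tilde u)\,\d W^{k,i}$ coming from the $U$-equation, and its quadratic variation in $H^{-\alpha_0}$ is controlled by
\[
\sum_{k}|k|^{-2\gamma}\|e_k\,\tilde u_s\|_{H^{1-\alpha_0}}^2
\ \gtrsim\ \|\tilde u_s\|_{L^2}^2\sum_{k}|k|^{-2\gamma+2(1-\alpha_0)},
\]
which diverges whenever $\gamma+\alpha_0\le \tfrac{5}{2}$; under the hypotheses $\gamma<\tfrac32$ and $\alpha_0<1$ this is always the case. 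Hence It\^o's formula for $\|D^N\|_{H^{-\alpha_0}}^2$ is not applicable, and there is no dissipation term $\|D^N\|_{H^{1-\alpha_0}}^2$ to exploit (indeed $U$ itself lives only in $H^{-\beta}$ for $\beta>\tfrac32-\gamma>0$ by Corollary~\ref{cor:CLT-limit-eq}, so $D^N\notin H^{1-\alpha_0}$ in general).

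The paper avoids this obstruction by working entirely in the mild formulation: both $U^N$ and $U$ are written via Duhamel with the semigroup $P_t=e^{C_d'\kappa t\Delta}$, and the difference is split into five pieces $I^N_1,\ldots,I^N_5$. The point is that the stochastic convolutions $\tilde Z^N,\tilde Z$ are estimated directly in $C^0_tH^{-\alpha_0}_x$ using semigroup smoothing inside the BDG bound (Lemma~\ref{lem:stoch-convol} and Step~4 of the proof), which effectively gains nearly one derivative on the noise at the cost of an integrable time singularity---something the bare It\^o identity cannot do. Two further corrections: your transport decomposition is off---the natural splitting is $K(D^N)\cdot\nabla u^N + \tilde v\cdot\nabla D^N + V\cdot\nabla(u^N-\tilde u)$, and no term $V^N\cdot\nabla D^N$ appears; and the rate bottleneck $N^{-q\frac{3-2\gamma}{2}(\alpha_0-\frac12)}$ comes from the stochastic-convolution difference ($I^N_5$, specifically $J^N_2$, via Theorem~\ref{thm-main}), not from the corrector remainder $I^N_4$, which actually enjoys the better rate $\epsilon_N^{q(\alpha_0-\frac12)}$.
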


We finish the introduction with the structure of the paper. In section \ref{sec-notations-noise} we introduce some notation used frequently in the sequel, and give the precise definitions of $\sigma_{k,i}$ and $W^{k,i}$ in the noise \eqref{noise-1}. Section 3 contains some preliminary results, including elementary estimates on the transport term (Section \ref{subs-classical-estim}), convergence rates of Stratonovich-It\^o correctors $S^{(d)}_{\theta^N}$ (Section \ref{subs-correctors}) and some estimates on stochastic convolutions in the last subsection. Theorems \ref{thm-main} and \ref{thm:CLT} will be proved in Sections \ref{sec-proof-converg-rates} and \ref{sec-CLT}, respectively. Finally, we present in Appendix \ref{sec-appendix-A} a brief proof of the existence of weak solutions to \eqref{stoch-NS-eq3}, and treat in Appendix \ref{sec-appendix-B} the convergence rates of the correctors $S^{(d)}_{\theta^N}$, proving Theorem \ref{thm-Ito-corrector}.

\section{Functional setting and choice of noise} \label{sec-notations-noise}

In this section, we fix some notation used in the paper and give the precise choice of noises. Let $\T^d=\R^d/\Z^d$ be the $d$-dimensional torus; $\Z_0^d = \Z^d \setminus \{0\}$ is the set of nonzero lattice points.
The notation $a_N \sim b_N$ means that the limit $\lim_{N \rightarrow +\infty} \frac{a_N}{b_N}=C>0$.
By $a \lesssim b$ we mean that there exists a constant $C>0$ such that $a \leq C b$; if we want to emphasize the dependence of $C$ on some parameters, e.g. $\delta,T$, then we write $a \lesssim_{\delta,T} b $.

As we assume the noise is spatially divergence free, it is clear that the spatial average of solutions to \eqref{stoch-NS-eq} is preserved; hence, we shall assume for simplicity that the function spaces in this paper consist of functions on $\T^d$ with zero average.

Let $\{e_k \}_k$ be the usual complex basis of $L^2(\T^d,  \mathbb C)$. Let $\mathcal{H}$ be the space of formal Fourier series $\mathcal{H}=\big\{ u=\sum_{k \in \Z_0^d} \hat{u}_k e_k: \hat{u}_k \in \mathbb{C}^d \big\}$ and $\mathcal{H}_{\sigma}$ be the subspace of $\mathcal{H}$ consisting of divergence free vector fields $ \mathcal{H}_{\sigma}=\left\{ u \in \mathcal{H} : k \cdot \hat{u}_k =0, \, \forall k \in \Z_0^d \right\} $. The Helmholtz-Leray projection operator $\Pi$ maps $\mathcal{H}$ to $\mathcal{H}_{\sigma}$:
  $$ \Pi u := \Pi \sum_{k \in \Z_0^d} \hat{u}_k e_k = \sum_{k \in \Z_0^d} \Big(\hat{u}_k-\frac{k \cdot \hat{u}_k}{|k|^2}k \Big) e_k .$$ 
For $s \in \R$, we define Sobolev space $H^{s}(\T^d;\R^d)$ as
$$  H^{s}(\T^d;\R^d)=\left\{ u \in \mathcal{H}: \| u \|_{H^s} < \infty, \, \overline{\hat{u}_k} =\hat{u}_{-k}\right\},$$
where $\|u\|_{H^s}^2= \sum_{k \in \Z_0^d} |2 \pi k|^{2s} |\hat{u}_k|^2 $, and we define $H^{s}_{\sigma}(\T^d;\R^d)=H^{s}(\T^d;\R^d) \cap \mathcal{H}_{\sigma} $. For simplicity, we denote $H^{s}(\T^d;\R^d)$ and $H^{s}_{\sigma}(\T^d;\R^d)$ by $H^s$ and $H^s_{\sigma}$, respectively.
We denote the spaces $H^{0}$ and $H^{0}_{\sigma}$ by $H$ and $H_{\sigma}$, respectively, with the same norm $\|u\|_{L^2}=\|u\|_{H^0}$. We identify the continuous dual space of $H^{s}$ as $H^{-s}$ with the pairing given by $\langle u, v\rangle=\sum_{k \in \Z_0^d}\left(\hat{u}_{k} \cdot \hat{v}_{-k}\right)$.
Note that $H^{s+\varepsilon}$ is compactly embedded in $H^{s}$ for any $\varepsilon>0$. Similarly, $H_{\sigma}^{s+\varepsilon}$ is compactly embedded in $H_{\sigma}^{s}$. For simplicity, we denote the space $C([0,T];H^s(\T^d;\R^d))$ by $C^0_t H^{s}_x$ with norm $\|u\|_{C^0 H^s}= \sup_{t \in [0,T]} \|u(t)\|_{H^s}$.

For $1 \leq p < +\infty $, we define $\ell^p (\Z_0^d)$ as the space of $p$-order summable sequences indexed by $\Z_0^d$ with the norm $ \|a\|_{\ell^p}: = \big(\sum_{k \in \Z_0^d} |a_k|^p \big)^{1/p}$; $\ell^{\infty} (\Z_0^d)$ is the space of bounded sequences with the norm $ \|a\|_{\ell^{\infty}}: = \sup_{k \in \Z_0^d} |a_k| $.

\subsection{Choice of noise}\label{subs-noise}

As mentioned in Section \ref{subs-precise-model}, the space-time noise used to perturb \eqref{stoch-NS-eq} takes the form
\begin{equation}\label{noise}
    W(t, x)= \sqrt{C_{d} \kappa}\, \sum_{k \in \Z_0^d} \sum_{i=1}^{d-1} \theta_{k} \sigma_{k, i}(x) W^{k, i}_{t} ,
\end{equation}
where $C_{d}=d/(d-1)$ is a normalizing constant,  $\kappa>0$ is the noise intensity and $\theta \in \ell^{2}(\Z^d_0)$. $\{W^{k, i}:k\in\mathbb{Z}^{d}_{0},  i=1, \ldots, d-1\}$
are standard complex Brownian motions defined on a filtered probability space $(\Omega,  \mathcal F,  (\mathcal F_t),  \P)$, satisfying
\begin{equation}\label{noise.1}
    \overline{W^{k, i}} = W^{-k, i},  \quad\big[W^{k, i}, W^{l, j} \big]_{t}= 2t \delta_{k, -l} \delta_{i, j} .
\end{equation}
$\{\sigma_{k, i}: k\in\mathbb{Z}^{d}_{0},  i=1, \ldots, d-1\}$ are divergence free vector fields on $\T^d$ defined as
\begin{equation}\label{sigma}
    \sigma_{k, i}(x) = a_{k, i} e_{k}(x) ,
\end{equation}
where $\{a_{k, i}\}_{k, i}$ is a subset of the unit sphere $\mathbb{S}^{d-1}$ such that: (i) $a_{k, i}=a_{-k, i}$ for all $k\in \mathbb{Z}^{d}_{0}, \,  i=1, \ldots, d-1$; (ii) for fixed $k$,  $\{a_{k, i}\}_{i=1}^{d-1}$ is an ONB of $k^{\perp}=\{y\in\mathbb{R}^{d}:y\cdot k=0 \}$.
It holds that $\sigma_{k, i}\cdot \nabla e_k = \sigma_{k, i}\cdot \nabla e_{-k} \equiv 0$ for all $k\in \Z^d_0$ and $1\leq i\leq d-1$.
We shall always assume that $\theta $ is symmetric, i.e. $\theta_k = \theta_l$ for all $k, l\in \Z^d_0$ satisfying $|k|=|l|;$ and $\|\theta \|_{\ell^2} =1$.

In the main results of this article, we consider the special sequence of coefficients $\theta^N$ defined in \eqref{theta-N-def}; nevertheless, we shall provide in Appendix \ref{sec-appendix-A} a brief proof of existence of weak solutions to \eqref{stoch-NS-eq} with the above general noise.

\section{Some preparations}

This section contains some lemmas and estimates that will be used frequently in the sequel.

\subsection{Some classical estimates} \label{subs-classical-estim}

The following two lemmas will be useful when dealing with convolution sums. One can jump to Lemma \ref{cor-convolution sum} if one is not interested in the technical proof of Lemma \ref{lem-convolution sum}. We say that a sequence $A=\{ A_k \}_{k \in \Z_0^d} \in \R^{\Z_0^d}$ is  non-negative if $A_k \ge 0$ for all $k \in \Z_0^d$; we make the convention that $A_0 =0$, a condition fulfilled in applications below.

\begin{lemma} \label{lem-convolution sum}
Given $s \geq 0$ and $p_0 \geq 1$, assume $p_1\in [1, +\infty]$ and $p_2 \in \big[1, \frac{p_0}{p_0-1} \big]$ satisfy
  \begin{equation} \label{condition-convolution sum}
  \begin{cases}
  0 \leq  1-\frac{1}{p_0 p_1}-\frac{1}{p_2}< \frac{s}{d}, & \mbox{if } s>0 ; \\
  \frac{1}{p_0 p_1}+\frac{1}{p_2}=1, &  \mbox{if } s=0.
  \end{cases}
  \end{equation}
Let $A=\{ A_k \}_{k \in \Z_0^d} $, $B=\{ B_k \}_{k \in \Z_0^d} $ and $C=\{ C_k \}_{k \in \Z_0^d} $ be non-negative sequences; assume the new sequences  $ A^{(s)}= \{ |k|^{p_0 s} A_k \}_{k\in \Z_0^d} \in \ell^{p_1}(\Z_0^d)$, $ B^{(s)}= \{ |k|^{s} B_k \}_{k\in \Z_0^d} \in \ell^{p_2}(\Z_0^d)$ and $ \ C^{(s)}= \{ |k|^{-s} C_k \}_{k\in \Z_0^d} \in \ell^{p_0}(\Z_0^d)$. Then  we have
  $$ \sum_{k \in \Z_0^d} A_{k} \Big( \sum_{j \in \Z_0^d} B_{j} C_{k-j} \Big)^{p_0} \lesssim \big\|A^{(s)} \big\|_{\ell^{p_1}} \big\|B^{(s)} \big\|_{\ell^{p_2}}^{p_0} \big\|C^{(s)} \big\|_{\ell^{p_0}}^{p_0} .$$
\end{lemma}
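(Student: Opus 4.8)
The goal is a weighted discrete Young-type inequality: we want to bound a triple convolution sum in terms of $\ell^{p_i}$ norms of the weighted sequences. The plan is to start from the inner sum $\sum_j B_j C_{k-j}$ and split the weight $|k|^s$ appropriately among the two factors. The key algebraic fact is that for $s \ge 0$ and $j, k-j \in \Z_0^d$ one has $|k|^s \lesssim |j|^s + |k-j|^s$ (since $|k| \le |j| + |k-j|$ and $x \mapsto x^s$ is subadditive for $s \in [0,1]$, and for general $s \ge 0$ one uses $(a+b)^s \lesssim a^s + b^s$). Hence
\[
  \sum_j B_j C_{k-j} \lesssim |k|^{-s}\sum_j \big( |j|^s B_j \big) C_{k-j} + |k|^{-s}\sum_j B_j \big(|k-j|^s C_{k-j}\big) = |k|^{-s}\big( (B^{(s)} * C)_k + (B * C^{(s)})_k\big),
\]
where $*$ denotes convolution on $\Z_0^d$. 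Raising to the power $p_0$ and multiplying by $A_k = |k|^{-p_0 s} A^{(s)}_k$, the sum becomes controlled by
\[
  \sum_k A^{(s)}_k \big( (B^{(s)} * C)_k \big)^{p_0} + \sum_k A^{(s)}_k \big( (B * C^{(s)})_k \big)^{p_0},
\]
up to a constant depending on $p_0$.

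For each of these two terms I would apply Hölder's inequality in $k$ with exponents $(p_1, p_0/(p_0-1))$ (note $p_2 \le p_0/(p_0-1)$ is assumed, and when $s>0$ strict inequality is allowed because on a lattice $\ell^{p_2} \hookrightarrow \ell^{p_2'}$ for $p_2 \le p_2'$, so it suffices to work with the conjugate exponent) to separate $\|A^{(s)}\|_{\ell^{p_1}}$ from the convolution raised to the power $p_0$. This reduces matters to estimating $\big\| (B^{(s)} * C)^{p_0} \big\|_{\ell^{p_0/(p_0-1)}} = \| B^{(s)} * C \|_{\ell^{p_0 p_0/(p_0-1)}}^{p_0}$ and the symmetric quantity $\| B * C^{(s)} \|_{\ell^{p_0 p_0/(p_0-1)}}^{p_0}$. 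Now I invoke Young's convolution inequality on the group $\Z_0^d$: $\| f * g \|_{\ell^r} \le \| f \|_{\ell^a} \| g \|_{\ell^b}$ whenever $\tfrac1a + \tfrac1b = 1 + \tfrac1r$. With $f = B^{(s)} \in \ell^{p_2}$ and the target $r = p_0^2/(p_0-1)$, the required exponent $b$ for $C$ satisfies $\tfrac1b = 1 + \tfrac1r - \tfrac1{p_2}$; one checks, using the arithmetic constraint $1 - \tfrac1{p_0 p_1} - \tfrac1{p_2} \ge 0$ (equality when $s=0$) together with Hölder's exponent bookkeeping, that $\tfrac1b \ge \tfrac1{p_0}$, so $\ell^{p_0} \hookrightarrow \ell^b$ and $\| C^{(s)} \|_{\ell^b} \lesssim \| C^{(s)} \|_{\ell^{p_0}}$ — again using that we are on a lattice. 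The symmetric term with $B$ and $C^{(s)}$ swapped is handled identically. Collecting the pieces gives exactly the claimed bound $\lesssim \| A^{(s)}\|_{\ell^{p_1}} \| B^{(s)}\|_{\ell^{p_2}}^{p_0} \| C^{(s)}\|_{\ell^{p_0}}^{p_0}$.

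The main obstacle is the exponent bookkeeping: verifying that the interplay of the two applications of Hölder (first in $k$ to peel off $A^{(s)}$, then inside Young's inequality) is consistent with the hypotheses \eqref{condition-convolution sum}, and in particular that the strict inequality $1 - \tfrac1{p_0 p_1} - \tfrac1{p_2} < \tfrac{s}{d}$ (rather than just $\le$) is what is actually needed — I expect the role of $s/d$ to enter not through a subadditivity estimate but through a separate regime where, instead of distributing the weight as above, one bounds $|k|^{-s}$ crudely and the convergence of the resulting sum $\sum_k |k|^{-p_0 s \cdot(\text{something})}$ requires the exponent to exceed $d$; the write-up would need to split into the cases $s > 0$ and $s = 0$ accordingly. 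The lattice embeddings $\ell^p \hookrightarrow \ell^{p'}$ for $p \le p'$ are used repeatedly and should be stated once at the outset.
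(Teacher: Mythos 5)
Your high-level plan (split a weight via the triangle inequality $(a+b)^s \lesssim a^s + b^s$, then Hölder in $k$ followed by Young for convolutions) is the same kind of argument the paper uses, but your execution has a fundamental sign error that derails the whole decomposition, and the exponent bookkeeping has further gaps that you yourself acknowledge.

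The sign error: by the definitions in the lemma, $C^{(s)}_k = |k|^{-s} C_k$, i.e.\ $C_k = |k|^{s} C^{(s)}_k$. Your identity ``$\sum_j B_j (|k-j|^s C_{k-j}) = (B \ast C^{(s)})_k$'' is therefore false: $|k-j|^s C_{k-j} = |k-j|^{2s} C^{(s)}_{k-j}$, not $C^{(s)}_{k-j}$. Symmetrically, your first term $(B^{(s)} \ast C)_k$ contains the unweighted $C$, which is \emph{not} assumed to lie in any $\ell^p$ (only $C^{(s)} \in \ell^{p_0}$ is), so Young's inequality cannot even be started on that term. The useful direction, which the paper takes, is to write $C_{k-j} = |k-j|^s C^{(s)}_{k-j}$ first, then split $|k-j|^s \lesssim |k|^s + |j|^s$; the $|k|^{p_0 s}$ produced after raising to the power $p_0$ combines with $A_k$ to give $A^{(s)}_k$, while $|j|^s$ converts $B_j$ to $B^{(s)}_j$, and crucially \emph{both} resulting terms end up containing $C^{(s)}$ rather than $C$. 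Your decomposition inserts $1 = |k|^{-s}|k|^s$ on the wrong side and pushes the weight in the wrong direction.

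Even setting that aside, the Hölder step is not consistent: the pair $(p_1, p_0/(p_0-1))$ is a conjugate pair only when $p_1 = p_0$. For instance, Lemma~\ref{cor-convolution sum} invokes this lemma with $p_1 = \infty$, whose conjugate is $1$, and $\ell^{p_0/(p_0-1)}$ does \emph{not} embed into $\ell^1$ on a lattice. The embedding $\ell^p \hookrightarrow \ell^q$ ($p \le q$) that you invoke therefore goes the wrong way in precisely the cases the paper needs. Finally, you state that you do not know where the condition $1 - \tfrac1{p_0 p_1} - \tfrac1{p_2} < \tfrac{s}{d}$ enters; in the paper's proof it is exactly what makes the auxiliary Riesz sums $\sum_k |k|^{-s(1 - 1/(p_0p_1) - 1/p_2)^{-1}}$ converge in the two Hölder estimates that convert $\|B\|_{\ell^{\tilde p_1}}$ into $\|B^{(s)}\|_{\ell^{p_2}}$ and $\|A\|_{\ell^{p_2'/p_0}}$ into $\|A^{(s)}\|_{\ell^{p_1}}$. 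To fix your write-up you would need to (i) reverse the weight split as above, (ii) isolate the preliminary unweighted estimate $\sum_k a_k (b \ast c)_k^{p_0} \le \|a\|_{\ell^p}\|b\|_{\ell^{\tilde p}}^{p_0}\|c\|_{\ell^{p_0}}^{p_0}$ with the correct conjugacy $\tilde p = p_0 p/(p_0 p - 1)$, and (iii) carry out the two extra Hölder steps where $s/d$ governs the tail.
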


\begin{proof}
First, for $1 \leq p \leq +\infty$ and $1 \leq \tilde{p}=\frac{p_0 p}{p_0 p-1} \leq \frac{p_0}{p_0-1}$, we will prove the following preliminary estimate: for any non-negative sequences $a=\{ a_k \}_{k \in \Z_0^d} \in \ell^{p}(\Z_0^d) $, $b=\{ b_k \}_{k \in \Z_0^d} \in \ell^{\tilde{p}}(\Z_0^d) $ and $c=\{ c_k \}_{k \in \Z_0^d} \in \ell^{p_0}(\Z_0^d)$, it holds
  \begin{equation} \label{eq-convolution sum}
    \sum_{k \in \Z_0^d} a_{k} \Big( \sum_{j \in \Z_0^d} b_{j} c_{k-j} \Big)^{p_0} \leq \|a\|_{\ell^{p}} \|b\|_{\ell^{\tilde{p}}}^{p_0} \|c\|_{\ell^{p_0}}^{p_0} .
  \end{equation}
Indeed, let $p'$ be the conjugate number of $p$, then by H\"older's inequality, we have
  $$  \sum_{k \in \Z_0^d} a_{k} \Big( \sum_{j \in \Z_0^d} b_{j} c_{k-j} \Big)^{p_0} \leq \|a\|_{\ell^{p}} \bigg[ \sum_{k \in \Z_0^d} \Big( \sum_{j \in \Z_0^d} b_{j} c_{k-j} \Big)^{p_0 p'} \bigg]^{\frac{1}{p'}} = \|a\|_{\ell^{p}}  \| b \ast c \|_{\ell^{p_0 p'}}^{p_0} .$$
  By the relation between $p_0$, $p$ and $\tilde{p}$, we have
  $$ \frac{1}{p_0 p'} = \frac{1}{p_0}-\frac{1}{p_0 p}=\frac{1}{p_0} + \frac{1}{\tilde{p}}-1 .$$
Applying Young's inequality for convolutions gives us
  $$ \sum_{k \in \Z_0^d} a_{k} \Big( \sum_{j \in \Z_0^d} b_{j} c_{k-j} \Big)^{p_0} \leq \|a\|_{\ell^{p}}  \| b \ast c \|_{\ell^{p_0 p'}}^{p_0} \leq \|a\|_{\ell^{p}} \|b \|_{\ell^{\tilde{p}}}^{p_0} \|c \|_{\ell^{p_0}}^{p_0} .$$

The above inequality already implies the case $s=0$, so we assume $s>0$ in the sequel. By the triangle inequality we have
  $$\aligned
  \sum_{k \in \Z_0^d} A_{k} \Big( \sum_{j \in \Z_0^d} B_{j} C_{k-j} \Big)^{p_0}
  & = \sum_{k \in \Z_0^d} A_{k} \Big( \sum_{j \in \Z_0^d} B_{j} |k-j|^s |k-j|^{-s} C_{k-j} \Big)^{p_0} \\
  & \lesssim_{s} \sum_{k \in \Z_0^d} A_{k}^{(s)} \Big( \sum_{j \in \Z_0^d} B_{j} C_{k-j}^{(s)} \Big)^{p_0} + \sum_{k \in \Z_0^d} A_{k} \Big( \sum_{j \in \Z_0^d} B_{j}^{(s)} C_{k-j}^{(s)} \Big)^{p_0} .
  \endaligned $$
For the first term, using \eqref{eq-convolution sum} with $p=p_1$ and $\tilde{p}=\frac{p_0 p_1}{p_0 p_1-1}=:\tilde{p}_1$ we have
  $$ \sum_{k \in \Z_0^d} A_{k}^{(s)} \Big( \sum_{j \in \Z_0^d} B_{j} C_{k-j}^{(s)} \Big)^{p_0} \leq \big\|A^{(s)}\big\|_{\ell^{p_1}} \big\| B \big\|_{\ell^{\tilde{p}_1}}^{p_0} \big\|C^{(s)}\big\|_{\ell^{p_0}}^{p_0} .$$
Set $q_1 :=p_2 / \tilde{p}_1=p_2(1-\frac{1}{p_0 p_1}) \geq 1$; H\"older's inequality implies
  $$\aligned
   \big\| B \big\|_{\ell^{\tilde{p}_1}} &=\Big(\sum_{k \in \Z_0^d} B_k^{\tilde{p}_1} \Big)^{\frac{1}{\tilde{p}_1}} = \Big(\sum_{k \in \Z_0^d} |k|^{-s \tilde{p}_1} (|k|^{s} B_k )^{\tilde{p}_1} \Big)^{\frac{1}{\tilde{p}_1}} \\
  & \leq \Big(\sum_{k \in \Z_0^d} |k|^{-s \tilde{p}_1 \frac{q_1}{q_1-1}} \Big)^{\frac{q_1-1}{q_1\tilde{p}_1}} \Big(\sum_{k \in \Z_0^d} (|k|^{s} B_k )^{p_2} \Big)^{\frac{1}{p_2}} \lesssim \big\|B^{(s)}\big\|_{\ell^{p_2}},
  \endaligned $$
where the last step is due to $s \tilde{p}_1 \frac{q_1}{q_1-1}= s \big( 1-\frac{1}{p_0 p_1}-\frac{1}{p_2} \big)^{-1}>d $. So we get
  $$ \sum_{k \in \Z_0^d} A_{k}^{(s)} \Big( \sum_{j \in \Z_0^d} B_{j} C_{k-j}^{(s)} \Big)^{p_0} \lesssim \big\|A^{(s)}\big\|_{\ell^{p_1}} \big\|B^{(s)}\big\|_{\ell^{p_2}}^{p_0} \big\|C^{(s)}\big\|_{\ell^{p_0}}^{p_0} .$$

For the second term, applying \eqref{eq-convolution sum} with $p= \frac{p_2}{p_0(p_2 -1)} = \frac{p'_2}{p_0} \ge 1$ and $\tilde p = \frac{p_0 p}{p_0 p-1} = p_2$ yields
  $$\sum_{k \in \Z_0^d} A_{k} \Big( \sum_{j \in \Z_0^d} B_{j}^{(s)} C_{k-j}^{(s)} \Big)^{p_0} \leq \|A \|_{\ell^{p'_2/p_0}} \big\| B^{(s)} \big\|_{\ell^{p_2}}^{p_0} \big\| C^{(s)} \big\|_{\ell^{p_0}}^{p_0}. $$
Define $q_2=p_0p_1/p_2'=p_0p_1(1-\frac{1}{p_2})\geq 1$, by H\"older inequality, we have
  $$\|A \|_{\ell^{p'_2/p_0}} = \bigg(\sum_k |k|^{-sp'_2} (|k|^{p_0 s} A_k)^{\frac{p'_2}{p_0}} \bigg)^{\frac{p_0}{p'_2}} \le \Big(\sum_k |k|^{-sp'_2 \frac{q_2}{q_2-1}} \Big)^{\frac{p_0}{p'_2} \frac{q_2}{q_2-1}} \bigg(\sum_k \big(A^{(s)}_k \big)^{p_1} \bigg)^{\frac{1}{p_1}}.$$
By $sp'_2 \frac{q_2}{q_2-1} = s \big( 1-\frac{1}{p_0 p_1}-\frac{1}{p_2} \big)^{-1}>d $, we obtain $\|A \|_{\ell^{p'_2/p_0}} \lesssim \big\| A^{(s)} \big\|_{\ell^{p_1}}$. Thus, the second term enjoys the same estimate.
Combining these results we complete the proof.
\end{proof}

\begin{lemma} \label{cor-convolution sum}
  Let $a,b \in (0,d/2)$ and $0 \leq c<\min \{a,b,a+b-d/2\}$, for any $u \in H^{-c}(\T^d,\R^d)$, we have
  $$ \sum_{k,l \in \Z_0^d} |k|^{-2a} |l|^{-2b} |\< u, e_{k-l} \>|^2 \lesssim \| u\|_{H^{-c}}^2.$$
\end{lemma}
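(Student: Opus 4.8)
The plan is to recognise the left-hand side as an instance of the convolution sum of Lemma~\ref{lem-convolution sum} with $p_0=1$ and then to choose the auxiliary exponents appropriately; alternatively, an elementary estimate of the resulting one-fold convolution $\sum_{l\in\Z_0^d}|n+l|^{-2a}|l|^{-2b}\lesssim|n|^{-2c}$ --- obtained after the substitution $n=k-l$ and by splitting the $l$-summation according to the size of $|l|$ --- would do the job, with the same arithmetic at its core. To set things up, note that $|\langle u,e_{k-l}\rangle|^2=|\hat u_{k-l}|^2$, the squared modulus of a Fourier coefficient in $\mathbb C^d$, with $\hat u_0=0$ by the zero-average convention. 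Keeping the sum over $k$ on the outside, one simply has
$$\sum_{k,l\in\Z_0^d}|k|^{-2a}\,|l|^{-2b}\,|\langle u,e_{k-l}\rangle|^2 \;=\; \sum_{k\in\Z_0^d} A_k\,\Big(\sum_{l\in\Z_0^d} B_l\, C_{k-l}\Big),$$
where $A_k=|k|^{-2a}$, $B_k=|k|^{-2b}$ and $C_k=|\hat u_k|^2$ are non-negative sequences (with $A_0=B_0=C_0=0$, as in the convention of Lemma~\ref{lem-convolution sum}; the vanishing of $C_0$ is automatic from $\hat u_0=0$). This is precisely the left-hand side in Lemma~\ref{lem-convolution sum} with $p_0=1$.

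I would then apply that lemma with $s=2c$. For $p_0=1$ the admissible range for $p_2$ is all of $[1,\infty]$, the weighted sequences become $A^{(s)}=\{|k|^{-2(a-c)}\}$, $B^{(s)}=\{|k|^{-2(b-c)}\}$ and $C^{(s)}=\{|k|^{-2c}|\hat u_k|^2\}$, and, for $c>0$, condition \eqref{condition-convolution sum} reads $0\le 1-\tfrac1{p_1}-\tfrac1{p_2}<\tfrac{2c}{d}$. Since $0<a-c,\,b-c<\tfrac d2$ --- which follows from $0\le c<a+b-\tfrac d2$ and $a,b<\tfrac d2$, using $\min\{a,b,a+b-\tfrac d2\}=a+b-\tfrac d2$ --- the memberships $A^{(s)}\in\ell^{p_1}$ and $B^{(s)}\in\ell^{p_2}$ amount to $\tfrac1{p_1}<\tfrac{2(a-c)}{d}$ and $\tfrac1{p_2}<\tfrac{2(b-c)}{d}$. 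The one genuine computation is that these constraints are compatible with $0\le 1-\tfrac1{p_1}-\tfrac1{p_2}<\tfrac{2c}{d}$: a valid pair $(p_1,p_2)$ exists exactly when $1-\tfrac{2c}{d}<\tfrac{2(a-c)}{d}+\tfrac{2(b-c)}{d}$, i.e.\ when $c<a+b-\tfrac d2$, precisely the hypothesis; one then distributes the budget $\tfrac1{p_1}+\tfrac1{p_2}$ between the two windows, noting $\tfrac1{p_1},\tfrac1{p_2}<1$ automatically. (For $c=0$ one instead invokes the alternative $\tfrac1{p_1}+\tfrac1{p_2}=1$ of \eqref{condition-convolution sum}, still feasible because $a+b>\tfrac d2$.)

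With such $(p_1,p_2)$, Lemma~\ref{lem-convolution sum} bounds the sum by $\|A^{(s)}\|_{\ell^{p_1}}\,\|B^{(s)}\|_{\ell^{p_2}}\,\|C^{(s)}\|_{\ell^1}$: the first two norms are finite constants depending only on $a,b,c,d$, while $\|C^{(s)}\|_{\ell^1}=\sum_{k\in\Z_0^d}|k|^{-2c}|\hat u_k|^2=(2\pi)^{2c}\,\|u\|_{H^{-c}}^2$, which is the desired estimate. I expect the only real obstacle to be the parameter bookkeeping of the middle step --- confirming that the standing hypotheses carve out a non-empty window for $(p_1,p_2)$ --- together with the trivial care needed around the zero Fourier mode.
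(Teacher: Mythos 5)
Your proof is correct and follows essentially the same route as the paper's: both apply Lemma~\ref{lem-convolution sum} with $p_0=1$, $s=2c$, and the same sequences $A,B,C$, and both reduce to the same feasibility bookkeeping $1-\tfrac{2c}{d}<\tfrac{2(a-c)}{d}+\tfrac{2(b-c)}{d}$, which is exactly $c<a+b-\tfrac{d}{2}$. The paper merely makes an explicit first choice $p_1\in\big(\tfrac{d}{2(a-c)},\tfrac{d}{d-2b}\big)$ and then fits $p_2$, whereas you phrase it as an abstract non-emptiness argument for the $(p_1,p_2)$ window; the two are the same computation.
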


\begin{proof}
We will use Lemma \ref{lem-convolution sum} to prove this result.
Take $p_0=1$ and $s=2c$; let $A_k = |k|^{-2a} $, $B_k = |k|^{-2b}$ and $ C_k=|\< u, e_{k} \>|^2$, $k \in \Z_0^d$. Due to $a-c>0$, $b-c>0$ and $a-c+b>\frac{d}{2}$, we can find $p_1 \in \big(\frac{d}{2(a-c)}, \frac{d}{d-2b} \big) $ such that $A^{(2c)} \in \ell^{p_1}(\Z^d_0)$ and
  $$ 0< \eta(p_1):= 1-\frac{2c}{d}- \frac{1}{p_1} < \frac{2(b-c)}{d}.$$
Taking $p_2=1/\eta(p_1)$ when $c=0$, and $p_2< 1/\eta(p_1)$ when $c>0$, then condition \eqref{condition-convolution sum} holds. As $\eta(p_1)<\frac{2(b-c)}{d}$, we can further choose $p_2> \frac{d}{2(b-c)}$, then $B^{(2c)} \in \ell^{p_2}(\Z^d_0)$. By Lemma \ref{lem-convolution sum}, we have
$$  \sum_{k,l \in \Z_0^d} |k|^{-2a} |l|^{-2b} |\< u, e_{k-l} \>|^2 \lesssim \| C^{(2c)}\|_{\ell^1}= \| u\|_{H^{-c}}^2 .$$
We obtain the desired result.
\end{proof}

We need the following key estimates on the transport term $V\cdot\nabla f$. The estimate (i) below largely improves the second result in \cite[Lemma 2.1]{FGL21c} since we allow $b$ here to range in $(0,\frac d2)$.

\begin{lemma}\label{lem-nD-nonlinearity}
  For $d=2,3$, let $V\in H_\sigma$ be a divergence free vector field.
  \begin{itemize}
  \item[\rm(i)] Let $0<b<\frac{d}{2}<a$, $V\in H^a(\T^d,\R^d)$ and $f\in H^{-b}(\T^d)$, then
  \[
    \|V\cdot\nabla f\|_{H^{-1-b}} \lesssim_{a,b} \|V\|_{H^a} \|f\|_{H^{-b}}.
  \]
  \item[\rm(ii)] Let $b\in(0,\frac{d}{2})$, $V\in H^{\frac{d}{2}-b}(\T^d,\R^d)$ and $f\in L^{2}(\T^d)$, then
  \[
    \| V\cdot\nabla f\|_{H^{-1-b}} \lesssim_b \| V\|_{H^{\frac{d}{2}-b}} \, \|f\|_{L^{2}}.
  \]
  \item[\rm(iii)] Let $ 0 \leq b < \frac{d}{2}$, $\epsilon>0,$ $V \in H^{b}(\T^d,\R^d)$, and $f \in H^{-b}(\T^d)$, then
  \[
    \| V\cdot\nabla f\|_{H^{-1-\frac{d}{2}-\epsilon}} \lesssim_{b,\epsilon} \|V\|_{H^b} \|f\|_{H^{-b}}.
  \]
  \end{itemize}
\end{lemma}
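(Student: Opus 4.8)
The unifying principle is that each of the three estimates follows from testing $V\cdot\nabla f$ against a smooth divergence-free test function $g$, writing things in Fourier variables, using that $\operatorname{div}V=0$ to symmetrize, and then reducing to the convolution-type bounds in Lemma \ref{cor-convolution sum} (or directly to Lemma \ref{lem-convolution sum}). Concretely, for part (i) I would pair $V\cdot\nabla f$ with $g\in H^{1+b}$ and use $\langle V\cdot\nabla f,g\rangle = -\langle f, V\cdot\nabla g\rangle$ (valid since $\operatorname{div}V=0$); expanding in Fourier series gives a triple sum $\sum_{j,m}\hat f_{-j}\,(\hat V_m\cdot (2\pi(j-m)))\,\hat g_{j-m}$ after relabeling, so that the estimate becomes a bound on $\sum_{j,m}|\hat f_j|\,|j-m|\,|\hat V_m|\,|\hat g_{j-m}|$. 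I would then distribute the weight $|j-m|$ as $|j-m|^{1+b}\cdot|j-m|^{-b}$, absorb $|j-m|^{1+b}$ into $\|g\|_{H^{1+b}}$ via Cauchy--Schwarz, and be left with controlling $\sum_{j,m}|j|^{b}|\hat f_j|\cdot|\hat V_m|\cdot|j-m|^{-b}|j|^{-b}\le \|f\|_{H^{-b}}\big(\sum_{j,m}|j|^{-2b}|j-m|^{-2b}|\hat V_m|^2\big)^{1/2}$ — wait, I must be careful with which index carries the negative weight; the clean route is to apply Lemma \ref{cor-convolution sum} to $u=V$ with exponents matched to $a$ (large, $>d/2$) and $b$ (in $(0,d/2)$). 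The condition $0\le c<\min\{a,b,a+b-d/2\}$ there, with the roles of the parameters chosen as $c$ near $b$, is exactly what forces $a>d/2$ and $b<d/2$, so Lemma \ref{cor-convolution sum} applies and yields $\|V\cdot\nabla f\|_{H^{-1-b}}\lesssim\|V\|_{H^a}\|f\|_{H^{-b}}$.

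For part (ii) the structure is the same but the regularity is split evenly: here $V$ only lies in $H^{d/2-b}$ and $f$ in $L^2$, so after the integration by parts and Cauchy--Schwarz against $\|g\|_{H^{1+b}}$ I would be left to bound $\sum_{j,m}|\hat f_j|\,|\hat V_m|\,|j-m|^{-b}$ with $f\in L^2$; squaring and summing reduces this to $\sum_{j,m}|j-m|^{-2b}|\hat V_m|^2\cdot(\text{something})$, which is handled by Lemma \ref{cor-convolution sum} applied to $u=V$ with $a=b'$, $b=d/2-b$-type weights chosen so that the constraint $a+b-d/2>c$ is met with $c=0$; the borderline case $c=0$ is allowed in Lemma \ref{cor-convolution sum}. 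Part (iii) is the easiest: with both $V$ and $f$ at the symmetric low-regularity level $H^{\pm b}$ and an extra loss of $d/2+\epsilon$ derivatives available on the test function, pairing against $g\in H^{1+d/2+\epsilon}$ gives $\sum_{j,m}|j|^{b}|\hat f_j|\cdot|m|^{-b}|\hat V_m|\cdot |j-m|^{-d/2-\epsilon}$ after redistributing weights, and since $\sum_{j}|j-m|^{-d/2-\epsilon}(\cdots)$ now has a summable-in-one-variable kernel, a direct application of Young's inequality for sequences (or again Lemma \ref{lem-convolution sum} with $s=2b$, $p_0=1$ and a suitable $p_1$ reflecting the $\ell^{2/(1-2\epsilon/d)}$-type summability of $|k|^{-d/2-\epsilon}$) closes it.

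The one point requiring genuine care — the main obstacle — is the bookkeeping of which Fourier index carries which weight after the integration by parts, since the gradient produces a factor $|j-m|$ on the \emph{difference} frequency rather than on $V$ or $f$, and one must split this factor correctly so that exactly $1+b$ of the derivatives land on the test function and the remaining negative power $|j-m|^{-b}$ combines with the weights on $V$ and $f$ in a way that fits the hypotheses of Lemma \ref{cor-convolution sum}. In particular, for (i) one needs to verify that the triple $(a,b,c)$ entering Lemma \ref{cor-convolution sum} can be chosen with $c$ arbitrarily close to (but below) $b$ while still $a>d/2$ — this is where the strict inequality $b<d/2<a$ in the hypothesis is used — and for (ii) that the symmetric split $d/2-b$ for $V$ together with $c=0$ still satisfies $a+b-d/2>c$, i.e. that the two weights add up to strictly more than $d/2$; both are straightforward once the indices are correctly tracked. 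I expect the rest to be routine Cauchy--Schwarz and Young's inequality.
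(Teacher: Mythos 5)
Your overall reduction — use $\nabla\cdot V=0$ to pass to $\|Vf\|_{H^{-\cdot}}$ and then bound the resulting weighted convolution sum — is the same as the paper's for parts (i) and (iii). The place where your plan has a genuine gap is part (ii), and there is a subtler issue with (i) as well.

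For part (ii), the Fourier/convolution route cannot work at the stated regularity. After any splitting, Corollary~\ref{cor-convolution sum} (or Lemma~\ref{lem-convolution sum}) requires a \emph{strict} inequality of the form ``sum of regularities $> d/2$''; but here $V\in H^{d/2-b}$ and $f\in L^2$, and the weight to be absorbed on the product is $|k|^{-b}$, so the relevant sum is exactly $b+(d/2-b)=d/2$: the hypothesis $0\le c<\min\{a,b,a+b-d/2\}$ of the corollary becomes $c<0$, which is impossible. Your remark that ``the borderline case $c=0$ is allowed'' misses that the borderline in question is $a+b-d/2=0$, not $c=0$. The paper avoids the convolution machinery entirely for (ii): it tests against $\phi\in H^{1+b}$, integrates by parts, applies the triple H\"older inequality $\|V\cdot\nabla\phi\|_{L^2}\le\|V\|_{L^{d/b}}\|\nabla\phi\|_{L^{2d/(d-2b)}}$, and then the \emph{critical} Sobolev embeddings $H^{d/2-b}\hookrightarrow L^{d/b}$ and $H^b\hookrightarrow L^{2d/(d-2b)}$. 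These embeddings are sharp at the endpoint and hence succeed precisely where the $\ell^p$-convolution lemma fails.

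For part (i), your proposed ``Cauchy--Schwarz first, then Corollary~\ref{cor-convolution sum}'' also runs into the strict inequality: splitting off $\|V\|_{H^a}$ (or $\|f\|_{H^{-b}}$) by Cauchy--Schwarz leaves a double sum of the form $\sum_{k,l}|k|^{-2b}|l|^{-2a}|\hat f_{k-l}|^2$ (or the analogue with $V$), and to recover $\|f\|_{H^{-b}}^2$ one would need $c=b$, but the corollary requires $c<\min\{b,a,a+b-d/2\}=b$ strictly. The paper sidesteps this by \emph{not} applying Cauchy--Schwarz first: it invokes Lemma~\ref{lem-convolution sum} directly with $p_0=2$, $p_1=\infty$ (so $\|A^{(b)}\|_{\ell^\infty}=1$ trivially), $p_2\in\big(\tfrac{2d}{d+2(a-b)},\tfrac{d}{d-b}\big)$, and then a separate H\"older inequality to trade the $\ell^{p_2}$ bound on $\{|k|^b|\hat V_k|\}$ for $\|V\|_{H^a}$. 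The nonemptiness of this $p_2$-range is exactly what the strict hypothesis $a>d/2$ buys; Cauchy--Schwarz in effect forces $p_2=2$, which lies outside the admissible interval. Part (iii) of your plan (Cauchy--Schwarz to peel off $\|V\|_{H^b}$, then Lemma~\ref{lem-convolution sum} with $p_0=1$, $s=2b$) does work, and is a minor variant of the paper's direct application with $p_0=2$, $s=b$, $p_2=2$; here the extra $\epsilon$ in the target norm supplies the needed slack.
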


\begin{proof}
We provide the proofs for completeness.

(i) By the divergence free assumption, we have $ \|V\cdot\nabla f\|_{H^{-1-b}} \lesssim \|Vf\|_{H^{-b}}$. Let $A=\{|k|^{-2b}\}_{k\in \Z^d_0}$, $B= \{ |\hat{V}_k| \}_{k\in \Z^d_0}$ and $C=\{ |\hat{f}_{k}| \}_{k\in \Z^d_0}$.
    Applying Lemma \ref{lem-convolution sum} with $s=b$, $p_0=2$, $p_1=+\infty$ and $p_2 < \frac{d}{d-b} <2 $, we have
$$ \aligned
  \|Vf\|_{H^{-b}}^2 & = \sum_{k \in \Z_0^d} |2 \pi k|^{-2b} \Big| \sum_{j \in \Z_0^d} \hat{V}_j \hat{f}_{k-j} \Big|^2 \lesssim \big\|A^{(b)}\big\|_{\ell^{\infty}} \big\|B^{(b)}\big\|_{\ell^{p_2}}^{2} \big\|C^{(b)}\big\|_{\ell^{2}}^{2} \\
  & \leq \Big( \sum_{k \in \Z_0^d} |k|^{bp_2} |\hat{V}_k|^{p_2} \Big)^{\frac{2}{p_2}} \|f\|_{H^{-b}}^2.
\endaligned $$
Due to $b<\frac{d}{2}<a$, for any $p_2 \in \big(\frac{2d}{d+2(a-b)}, \frac{d}{d-b} \big)$, one has $\big\{|k|^{-p_2(a-b)}\big\}_{k \in \Z_0^d} \in \ell^{\frac{2}{2-p_2}}(\Z_0^d)$. By H\"older inequality, it holds
  $$\|Vf\|_{H^{-b}}^2 \lesssim \Big( \sum_{k \in \Z_0^d} |k|^{-p_2 (a-b) \frac{2}{2-p_2}} \Big)^{\frac{2-p_2}{p_2}} \Big( \sum_{k \in \Z_0^d} |k|^{2a} |\hat{V}_k|^{2} \Big) \|f\|_{H^{-b}}^2 \lesssim \|V\|_{H^{a}}^2 \|f\|_{H^{-b}}^2.$$
(ii) Since $b<\frac{d}{2}$, we have $\frac{d}{2b}> 1$ and $\frac{d}{d-2b}< +\infty$. By the divergence free assumption on $V$ and H\"older inequality, it holds
    \[
    \aligned
    \|V\cdot\nabla f\|_{H^{-1-b}} = & \sup_{\|\phi\|_{H^{1+b}}=1} | \< V \cdot \nabla f,\phi \> |= \sup_{\|\phi\|_{H^{1+b}}=1} |\< f,V \cdot \nabla  \phi \> |\\
    \leq & \sup_{\|\phi\|_{H^{1+b}}=1} \|f \|_{L^2} \|V \cdot \nabla \phi \|_{L^2} \\
    \leq & \sup_{\|\phi\|_{H^{1+b}}=1} \|f \|_{L^2} \|V \|_{L^{\frac{d}{b}}} \|\nabla \phi \|_{L^{\frac{2d}{d-2b}}}.
    \endaligned
    \]
    By the Sobolev embeddings $\|V \|_{L^{\frac{d}{b}}} \lesssim \|V\|_{H^{\frac d2 -b}}$ and $\|\nabla \phi \|_{L^{\frac{2d}{d-2b}}} \lesssim \|\phi\|_{H^{b+1}}$, we have
    \[
    \|V\cdot\nabla f\|_{H^{-1-b}} \lesssim_{a} \|V\|_{H^{\frac{d}{2}-b}} \|f\|_{L^2}.
    \]
(iii) Since $V$ is divergence free, we have $\| V \cdot \nabla f\|_{H^{-1-\frac{d}{2}-\epsilon} } \lesssim \| V f\|_{H^{-\frac{d}{2}-\epsilon} } $. We have
    $$ \|Vf\|_{H^{-\frac{d}{2}-\eps}}^2 = \sum_{k \in \Z_0^d} |2 \pi k|^{-d-2\eps} \Big| \sum_{j \in \Z_0^d} \hat{V}_j \hat{f}_{k-j} \Big|^2 .$$
Let $A=\{|k|^{-d-2\eps}\}_{k\in \Z^d_0}$, $B= \{ |\hat{V}_k| \}_{k\in \Z^d_0}$ and $C=\{ |\hat{f}_{k}| \}_{k\in \Z^d_0}$. For $b=0$, we apply Lemma \ref{lem-convolution sum} with $s=b=0$, $p_0=2$, $p_1=1$ and $p_2=2$ to obtain
    $$ \|Vf\|_{H^{-\frac{d}{2}-\eps}}^2 \lesssim \|A \|_{\ell^{1}} \|B \|_{\ell^{2}}^{2} \|C \|_{\ell^{2}}^{2} \lesssim \| V\|_{L^2}^2 \| f\|_{L^2}^2.$$
If $b>0$, applying Lemma \ref{lem-convolution sum} with $s=b$, $p_0=2$, $p_1\in \big[1,\frac{d}{d-2b} \big)$ and $p_2=2$, we obtain
    $$ \|Vf\|_{H^{-\frac{d}{2}-\eps}}^2 \lesssim \big\|A^{(b)} \big\|_{\ell^{p_1}} \big\|B^{(b)} \big\|_{\ell^{2}}^{2} \big\|C^{(b)} \big\|_{\ell^{2}}^{2} = \Big(\sum_{k\in \Z_0^d} |k|^{-(d-b+ 2\eps)p_1} \Big)^{\frac{1}{p_1}} \| V\|_{H^{b}}^2 \| f\|_{H^{-b}}^2. $$
We can choose $p_1 \in \big(1\vee \frac{d}{d-b+ 2\eps} ,\frac{d}{d-2b} \big)$, then $\sum_{k\in \Z_0^d} |k|^{-(d-b+ 2\eps)p_1 }< +\infty$. So we obtain $\|Vf\|_{H^{-\frac{d}{2}-\eps}} \lesssim \| V\|_{H^{b}} \| f\|_{H^{-b}}$.
\end{proof}

\subsection{Convergence of the Stratonovich-It\^o correctors}\label{subs-correctors}
Recall the Stratonovich-It\^o correctors $S_\theta^{(d)}(\cdot)$ defined in \eqref{def-Stratonovich-Ito-correctors} and
the sequence $\theta^N \in \ell^2(\Z^d_0)$ in \eqref{theta-N-def}; we want to prove the convergence of $S^{(d)}_{\theta^N}(\cdot)$ and give the quantitative convergence rates. First we introduce the decreasing factor
\begin{equation}\label{def-D_N}
  D_N := \epsilon_N \sum_{1 \leq |k| \leq N} \frac{1}{|k|^{2\gamma+1}} \sim \begin{cases}
    N^{2\gamma -d}, & \gamma\in \big( \frac{d-1}{2}, \frac{d}{2} \big) ;\\
    \frac{\log N}{N}, & \gamma=\frac{d-1}{2} ;\\
    N^{-1}, & \gamma\in \big(0, \frac{d-1}{2} \big).
  \end{cases}
\end{equation}
Here is the relation between $D_N$ and $\| \theta^N \|_{\ell^\infty} = \sqrt{\epsilon_N}$.

\begin{proposition}\label{pro-D_N}
For any $q \in (0,1 \wedge \frac{1}{d-2\gamma})$, we have $ D_N \lesssim_{q} \epsilon_N^{q}=\| \theta^N \|_{\ell^\infty}^{2q}$.
\end{proposition}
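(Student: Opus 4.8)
The plan is to compare the two quantities $D_N = \epsilon_N \sum_{1\le|k|\le N}|k|^{-2\gamma-1}$ and $\epsilon_N^q$ directly, using only the asymptotics in \eqref{def-epsilon} and \eqref{def-D_N}. Since $\epsilon_N \sim N^{2\gamma-d}$, we have $\epsilon_N^q \sim N^{q(2\gamma-d)}$, and recalling $0<\gamma<\frac d2$ this exponent $q(2\gamma-d)$ is negative, so $\epsilon_N^q\to 0$; the claim is that $D_N$ decays at least as fast.

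First I would split into the three regimes of \eqref{def-D_N}. In the range $\gamma\in(0,\frac{d-1}{2}]$ one has $D_N\lesssim N^{-1}\log N$ (with no log when $\gamma<\frac{d-1}{2}$), while $\epsilon_N^q\sim N^{-q(d-2\gamma)}$; since the hypothesis $q<\frac{1}{d-2\gamma}$ forces $q(d-2\gamma)<1$, the power $N^{-q(d-2\gamma)}$ decays strictly slower than $N^{-1}$, hence strictly slower than $D_N$ (the logarithm is absorbed because the inequality on exponents is strict), and $D_N\lesssim_q \epsilon_N^q$ follows. In the remaining range $\gamma\in(\frac{d-1}{2},\frac d2)$ one has $D_N\sim N^{2\gamma-d}=\epsilon_N$, so we must check $\epsilon_N\lesssim_q\epsilon_N^q$, which holds because $\epsilon_N\to0$ and $q\le 1$ gives $\epsilon_N^{1-q}\lesssim 1$; the boundary case $\gamma=\frac{d-1}{2}$ is handled as in the first regime since there $d-2\gamma=1$ and $q<1$ makes the exponent inequality strict, killing the $\log N$.

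Finally I would record the identity $\|\theta^N\|_{\ell^\infty}^2=\epsilon_N$, which is immediate from \eqref{theta-N-def} since $\theta^N_k=\sqrt{\epsilon_N}|k|^{-\gamma}{\bf 1}_{\{1\le|k|\le N\}}$ attains its supremum $\sqrt{\epsilon_N}$ at $|k|=1$; this yields the stated equality $\epsilon_N^q=\|\theta^N\|_{\ell^\infty}^{2q}$. The only mild subtlety — and the one point I would be careful about — is making sure the strictness of $q<\frac{1}{d-2\gamma}$ (equivalently $q(d-2\gamma)<1$) is genuinely used to absorb the logarithmic factor in the critical case, rather than appealing to it sloppily; everything else is an elementary comparison of powers of $N$.
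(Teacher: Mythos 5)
Your argument is correct, but it takes a genuinely different route from the paper. You compare the two sides purely through the asymptotics recorded in \eqref{def-epsilon} and \eqref{def-D_N}, splitting into the three regimes $\gamma\in(0,\tfrac{d-1}{2})$, $\gamma=\tfrac{d-1}{2}$, and $\gamma\in(\tfrac{d-1}{2},\tfrac d2)$; in each case the strictness of $q<1\wedge\frac{1}{d-2\gamma}$ gives a strict gap in the exponents of $N$, which absorbs the logarithm at the critical value $\gamma=\tfrac{d-1}{2}$. The paper's proof avoids the case split entirely: it rewrites $D_N=\epsilon_N^q\sum_k(\theta_k^N)^{2(1-q)}|k|^{-2q\gamma-1}$ and applies H\"older's inequality with exponents $\tfrac{1}{1-q}$ and $\tfrac{1}{q}$, using $\|\theta^N\|_{\ell^2}=1$ and the convergence of $\sum_k|k|^{-(2\gamma+1/q)}$, which holds precisely because $2\gamma+1/q>d$, i.e.\ $q<\tfrac{1}{d-2\gamma}$; the other half of the hypothesis, $q<1$, is what makes the H\"older exponents legitimate. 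The two routes buy different things: your version is elementary and makes the sharpness of the exponent $q$ visibly transparent regime by regime, but it relies on the asymptotics \eqref{def-D_N}, which the paper states without proof; the paper's H\"older argument is self-contained (it uses only $\|\theta^N\|_{\ell^2}=1$ and a single summability check), is uniform in $\gamma$, and produces a clean constant depending only on $q$. You do correctly identify $\|\theta^N\|_{\ell^\infty}^2=\epsilon_N$ from \eqref{theta-N-def}, so no gap there.
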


\begin{proof}
H\"older's inequality yields
  \begin{equation*}
    D_N = \epsilon_N^{q} \sum_{k} (\theta_k^{N})^{2(1-q)} \frac{1}{|k|^{2q\gamma+1}} \leq \epsilon_N^{q} \Bigl( \sum_k (\theta_k^{N})^{2} \Bigr)^{1-q} \biggl( \sum_k \frac{1}{|k|^{2\gamma+ 1/q}} \biggr)^{q}.
  \end{equation*}
As $q<\frac{1}{d-2\gamma}$, it holds $\sum_k \frac{1}{|k|^{2\gamma+ 1/q}}  < +\infty$. So we have
  \begin{equation*}
    D_N \lesssim_{q} \epsilon_N^{q} \Bigl( \sum_k (\theta_k^{N})^{2} \Bigr)^{1-q} =\epsilon_N^{q}.
  \end{equation*}
In the last step, we used the fact $\sum_k (\theta_k^{N})^{2}=1$.
\end{proof}

Now we are ready to present two key limit results which are generalizations of \cite[Theorem 5.1]{FL21} and \cite[Theorem 3.1]{Luo21b}; the main difference from these two results lies in the choice of coefficients $\theta^N$, because we allow here that $\theta^N$ does not vanish for lower modes. As the proofs are quite technical we postpone them to Appendix \ref{sec-appendix-B}.

\begin{theorem}\label{thm-Ito-corrector}
Let $\theta^N$ be defined as in \eqref{theta-N-def}, $N\geq 1$. There exists a constant $C>0$, independent of $N\geq 1$, such that for any $\alpha\in [0,1]$, and
\begin{itemize}
\item (2D case) for any divergence free field $v\in H^b(\T^2,\R^2)$, it holds
  \begin{equation}\label{thm-Ito-corrector.1}
  \bigg\| S_{\theta^N}^{(2)} (v)- \frac{1}{4} \kappa \Delta v \bigg\|_{H^{b-2-\alpha}} \leq C \kappa D_N^{\alpha} \|v \|_{H^b};
  \end{equation}
\item (3D case) for any divergence free field $v\in H^b(\T^3,\R^3)$, it holds
  \begin{equation}\label{thm-Ito-corrector.2}
  \bigg\| S_{\theta^N}^{(3)} (v)- \frac{3}{5} \kappa \Delta v \bigg\|_{H^{b-2-\alpha}} \leq C \kappa D_N^{\alpha} \|v \|_{H^b}.
  \end{equation}
\end{itemize}
\end{theorem}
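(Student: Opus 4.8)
The plan is to work entirely on the Fourier side, reducing the statement to estimating, uniformly in $N$, a single convolution sum against the symbol of the discrepancy $S_{\theta^N}^{(d)}(v) - C_d' \kappa \Delta v$. First I would expand the corrector \eqref{def-Stratonovich-Ito-correctors} using $\sigma_{k,i} = a_{k,i} e_k$ and the identity $\sigma_{k,i}\cdot\nabla e_m = 2\pi\mathrm{i}(a_{k,i}\cdot m) e_{k+m}$, together with the explicit form of $\Pi$. For a fixed Fourier mode $m$ of $v$, the inner operation $\Pi(\sigma_{-k,i}\cdot\nabla v)$ produces mode $m-k$ with a coefficient involving $(a_{-k,i}\cdot m)$ and the projection $(I - (m-k)\otimes(m-k)/|m-k|^2)$; applying $\sigma_{k,i}\cdot\nabla\Pi(\cdot)$ and then $\Pi$ again brings us back to mode $m$ with a coefficient that is a sum over $k$ with $1\le|k|\le N$ of $\theta_k^{N,2} = \epsilon_N |k|^{-2\gamma}\mathbf 1_{\{1\le|k|\le N\}}$ times a bounded (in $k,m$) tensor built from the $a_{k,i}$'s and the two projections. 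Summing over $i=1,\dots,d-1$ using property (ii) of the $a_{k,i}$ (that $\{a_{k,i}\}_i$ is an ONB of $k^\perp$, so $\sum_i a_{k,i}\otimes a_{k,i} = I - k\otimes k/|k|^2$) collapses the $i$-sum into an explicit $d\times d$ matrix $M_k(m)$ depending rationally on $k$ and $m$. The normalizing constant $C_d = d/(d-1)$ is chosen precisely so that, in the isotropic limit (averaging $M_k(m)$ over directions of $k$), the leading term reproduces $C_d' \kappa |2\pi m|^2$, where the peculiar value of $C_d'$ in \eqref{C-d-prime} comes from the outer $\Pi$ acting on $m$.

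The heart of the matter is then the decomposition $S_{\theta^N}^{(d)}(v) - C_d'\kappa\Delta v$ into: (a) a "diagonal" piece $\kappa\sum_k \theta_k^{N,2}\, g(k,m)$ contributing to the symbol at mode $m$, where $g$ is homogeneous of degree $2$ in $m$ for $|k|$ large, minus its isotropic average; plus (b) error terms coming from the difference between $|m-k|^{-2}$-type denominators and their large-$|k|$ expansions, i.e. terms where one "trades" a power $|k|^{-1}$ for a factor $|m|$ (or $|2\pi m|$). For piece (a), symmetry of $\theta^N$ in $k\mapsto -k$ and the ONB averaging kill the degree-2-in-$m$ part up to the claimed constant, leaving a remainder that is lower order in $|m|$; multiplying by $|2\pi m|^{2(b-2-\alpha)}$ and summing in $m$ gives $\|v\|_{H^b}^2$ times a constant, after noting the remaining $k$-sum is $\sum_k \theta_k^{N,2} = 1$. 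For piece (b), each $|k|^{-1}$ gained is exactly what produces the factor $D_N^\alpha$ after interpolation: the "full" error with one such gain is controlled by $\kappa D_N \|v\|_{H^{b-1}}$ (using $D_N = \epsilon_N\sum_{1\le|k|\le N}|k|^{-2\gamma-1}$ and the definition \eqref{def-D_N}), while the trivial bound with no gain is $\kappa\|v\|_{H^b}$ (or $\kappa\|v\|_{H^{b-2-\alpha}}$-scale, i.e. with $D_N^0=1$), and interpolating these two in the exponent $\alpha\in[0,1]$ yields $\kappa D_N^\alpha\|v\|_{H^b}$. The bookkeeping of which denominators expand cleanly — handled by Taylor expanding $|m-k|^{-2} = |k|^{-2}(1 + O(|m|/|k|))$ on the set $|k| > 2|m|$, and estimating the complementary near-resonant set $|k|\le 2|m|$ directly (there the sum over such $k$ of $\theta_k^{N,2}$ is $\lesssim \epsilon_N \sum_{|k|\le 2|m|}|k|^{-2\gamma} \lesssim \epsilon_N |m|^{d-2\gamma} \lesssim D_N |m|$ when $\gamma<d/2$, by a comparison that also invokes Proposition \ref{pro-D_N}) — is where most of the technical labor sits.

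The main obstacle, and the reason this is deferred to Appendix \ref{sec-appendix-B}, is precisely that $\theta^N$ is supported on all modes $1\le|k|\le N$ rather than on a dyadic annulus $N\le|k|\le 2N$: the near-resonant contribution $|k|\le 2|m|$ is then nonempty and must be estimated uniformly in $m$, which is what forces the appearance of $D_N$ (rather than $\epsilon_N$) as the correct rate and requires the careful comparison $\epsilon_N\sum_{|k|\le 2|m|}|k|^{-2\gamma}\lesssim D_N|m|$ valid in all three regimes of \eqref{def-D_N}. A secondary difficulty is keeping the dependence of all constants on $N$ out: every $k$-sum that appears must either be $\sum_k\theta_k^{N,2}=1$, or $\epsilon_N\sum_{1\le|k|\le N}|k|^{-2\gamma-1}=D_N$, or a convergent sum independent of $N$ (this is where $\gamma<d/2$ and the restriction $b\in\mathbb R$, $\alpha\in[0,1]$ are used, via Lemmas \ref{lem-convolution sum} and \ref{cor-convolution sum} to handle the genuinely off-diagonal pieces of $Vf$-type products that arise when $v$ has no special structure). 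Once these sums are isolated, the two displayed inequalities \eqref{thm-Ito-corrector.1} and \eqref{thm-Ito-corrector.2} follow by collecting terms and taking square roots.
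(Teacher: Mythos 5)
Your overall strategy — work mode-by-mode on the Fourier side, isolate the symbol discrepancy, and interpolate between a trivial bound and a bound with a gained factor of $D_N$ and one extra power of $|m|$ — is the right one and matches what the paper does in spirit (the paper's split into $K_{L,1}$ for $|l|>L$ and $K_{L,2}$ for $|l|\le L$, followed by the choice $L=D_N^{-1}$, is exactly that interpolation made explicit). You also correctly identify the distinguishing difficulty of this choice of $\theta^N$: because the support is $\{1\le|k|\le N\}$ rather than a thin annulus, low modes of $k$ contribute, and the near-resonant set $|k|\lesssim|m|$ must be handled; your comparison $\epsilon_N\sum_{|k|\le 2|m|}|k|^{-2\gamma}\lesssim D_N|m|$ is a correct and clean observation that the paper absorbs into the uniform estimate of Lemma~\ref{lem-differ} rather than stating separately.

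The genuine gap is in pinning down the constant $C'_d$. You write that ``the ONB averaging kills the degree-2-in-$m$ part up to the claimed constant,'' citing that ``the remaining $k$-sum is $\sum_k(\theta^N_k)^2=1$.'' That is not enough: after the Taylor expansion the $l$-independent part of the symbol is $\sum_k(\theta^N_k)^2 g(k)$ for a degree-zero homogeneous function $g$ (essentially $\sin^2(\angle_{k,l})\,\frac{(a_{l,i}\cdot k)(a_{l,j}\cdot k)}{|k|^2}$), and $\sum_k(\theta^N_k)^2 g(k)$ is \emph{not} equal to the isotropic average of $g$ times $\sum_k(\theta^N_k)^2$. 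The lattice sum has to be compared to the radial integral $\epsilon_N\int_{1\le|x|\le N}g(x)|x|^{-2\gamma}\,\d x$, and the error in that comparison is exactly of order $D_N$. This is the content of the paper's Lemma~\ref{lem-integral} (an Euler--Maclaurin/Riemann-sum estimate, applied twice: once with the $g$ above via Corollary~\ref{corollary-intergral}, and once with $g$ a constant to convert the integral back to $\sum_k(\theta^N_k)^2=1$), after which the integral is evaluated explicitly to produce $\frac{4}{15}\delta_{ij}$ in 3D, resp. $\frac{3}{8}$ in 2D. Without this sum-to-integral lemma the identification of $C'_d$ and the $O(D_N)$ error for the $l$-independent piece are asserted rather than proved. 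A secondary, much smaller issue is your bookkeeping ``$\kappa D_N\|v\|_{H^{b-1}}$'' as one endpoint of the interpolation: the correct endpoint is the bound $\lesssim\kappa D_N\|v\|_{H^b}$ in the $H^{b-3}$-norm (one derivative \emph{worse} in the output, not in the input), interpolated against $\lesssim\kappa\|v\|_{H^b}$ in $H^{b-2}$, which is what the paper's $L$-optimization achieves.
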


\subsection{Maximal estimates on stochastic convolution}

We first state some properties of the semigroup generated by the Stokes operator.

\begin{lemma}\label{lem-semigroup}
Let $a \in \R$.
\begin{itemize}
  \item [\rm(i)] For any $\varphi\in H^a_\sigma(\T^d,\R^d)$ and $\delta \geq 0$, it holds $\|e^{t \Delta} \varphi \|_{H^{a+\delta}} \lesssim_{\delta} t^{-\delta/2} \| \varphi \|_{H^{a}}$.
  \item [\rm(ii)] For any $\varphi\in H^a_\sigma(\T^d,\R^d)$ and $\delta \in [0,2]$, it holds  $\|(I-e^{t \Delta} )\varphi \|_{H^{a-\delta}} \lesssim_{\delta} t^{\delta/2} \| \varphi \|_{H^{a}}$.
  \item [\rm(iii)] If $\mu>0$ and $\psi \in L^2(0,T; H^a_\sigma (\T^d,\R^d))$, then
  $$\bigg\|\int_0^t e^{\mu (t-s) \Delta} \psi_s\,\d s \bigg\|_{H^{a+1}}^2 \lesssim \frac1\mu \int_0^t \|\psi_s \|_{H^a}^2\,\d s. $$
\end{itemize}
\end{lemma}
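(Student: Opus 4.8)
The plan is to reduce all three statements to elementary scalar estimates on Fourier multipliers, exploiting that on $\T^d$ with zero–mean fields the Stokes semigroup acts diagonally, $e^{t\Delta}\varphi=\sum_{k\in\Z_0^d}e^{-t|2\pi k|^2}\hat\varphi_k e_k$, and commutes with the Leray projection $\Pi$, so that restricting to $H^a_\sigma$ rather than $H^a$ changes nothing. In particular $|2\pi k|\geq 2\pi$ on $\Z_0^d$, so there are no low–frequency issues.

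For (i), I would write $\|e^{t\Delta}\varphi\|_{H^{a+\delta}}^2=\sum_{k\in\Z_0^d}|2\pi k|^{2a}|\hat\varphi_k|^2\cdot\big(|2\pi k|^{2\delta}e^{-2t|2\pi k|^2}\big)$ and use the elementary inequality $\sup_{x\geq 0}x^{\delta}e^{-2tx}=(\delta/(2e))^{\delta}t^{-\delta}$ (the supremum is attained at $x=\delta/(2t)$), applied with $x=|2\pi k|^2$. This bounds the bracketed factor by $C_\delta t^{-\delta}$ uniformly in $k$, and summing against $|2\pi k|^{2a}|\hat\varphi_k|^2$ gives the claim. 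For (ii), I would likewise write $\|(I-e^{t\Delta})\varphi\|_{H^{a-\delta}}^2=\sum_{k\in\Z_0^d}|2\pi k|^{2a}|\hat\varphi_k|^2\cdot\big(|2\pi k|^{-2\delta}(1-e^{-t|2\pi k|^2})^2\big)$, and use $0\leq 1-e^{-y}\leq\min(1,y)\leq y^{\delta/2}$ valid for all $y\geq 0$ whenever $\delta/2\in[0,1]$, i.e. $\delta\in[0,2]$; with $y=t|2\pi k|^2$ the bracketed factor is at most $t^{\delta}$, and summing concludes.

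For (iii), I would work on the Fourier side: with $g(t)=\int_0^t e^{\mu(t-s)\Delta}\psi_s\,\d s$ one has $\hat g_k(t)=\int_0^t e^{-\mu(t-s)|2\pi k|^2}\hat\psi_k(s)\,\d s$. Applying Cauchy--Schwarz in $s$ together with $\int_0^t e^{-\mu(t-s)|2\pi k|^2}\,\d s\leq (\mu|2\pi k|^2)^{-1}$ gives $|\hat g_k(t)|^2\leq(\mu|2\pi k|^2)^{-1}\int_0^t e^{-\mu(t-s)|2\pi k|^2}|\hat\psi_k(s)|^2\,\d s\leq(\mu|2\pi k|^2)^{-1}\int_0^t|\hat\psi_k(s)|^2\,\d s$, where at the last step I simply bound $e^{-\mu(t-s)|2\pi k|^2}\leq1$ rather than trying to retain the exponential (which would needlessly complicate the sum). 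Multiplying by $|2\pi k|^{2(a+1)}$, summing over $k\in\Z_0^d$ and using Tonelli yields $\|g(t)\|_{H^{a+1}}^2\leq\mu^{-1}\int_0^t\|\psi_s\|_{H^a}^2\,\d s$.

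There is no real obstacle here: each part is a one–line Fourier computation once the right scalar inequality is isolated. The only points requiring a small amount of care are the exact form and range of validity of the two scalar bounds ($\sup_x x^\delta e^{-2tx}$ for $\delta\geq0$, and $1-e^{-y}\leq y^{\delta/2}$ for $\delta\in[0,2]$, which is exactly why (ii) is stated for $\delta\in[0,2]$), and, in (iii), remembering to use Cauchy--Schwarz in time so that the $\mu^{-1}$ gain comes out cleanly.
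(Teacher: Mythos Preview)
Your proof is correct. The paper states this lemma as a standard property of the Stokes semigroup and does not supply a proof, so there is no alternative argument to compare against; your Fourier-multiplier computation (optimizing $x^\delta e^{-2tx}$ for (i), the interpolation bound $1-e^{-y}\le y^{\delta/2}$ for (ii), and Cauchy--Schwarz in time for (iii)) is exactly the expected verification.
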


Let $\{f_t \}_{t\in [0,T]}$ be a progressively measurable process of vector fields on $\T^d$, such that $\P$-a.s.,
  $$\sup_{t\in [0,T]} \|f_t \|_{L^2} \leq R. $$
Given $\mu>0$, we want to find some estimates on the stochastic convolution
  $$Z_t= \sqrt{C_d\, \kappa}\sum_{k,i} \theta_k \int_0^t e^{\mu(t-s) \Delta} \Pi(\sigma_{k,i} \cdot\nabla f_s) \,\d W^{k,i}_s.$$
The following result was first proved in \cite[Corollary 2.6]{FGL21c} in the case that $f$ is a process of functions, but the same proof works for vector field valued processes with few changes.

\begin{proposition} \label{prop-stoch-convol}
For any $\beta \in (0, d/2]$, $p\geq 1$ and any $\delta \in (0,\beta)$, it holds
  $$\E \bigg[\sup_{t\in [0,T]} \|Z_{t} \|_{H^{-\beta}}^p \bigg]^{1/p} \lesssim_{\delta,p,T} \sqrt{\kappa\mu^{\delta-1} } \|\theta \|_{\ell^\infty}^{2(\beta-\delta)/d} R. $$
\end{proposition}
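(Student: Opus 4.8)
The plan is to estimate the stochastic convolution $Z_t$ by combining the factorization method (Da Prato--Kwapie\'n--Zabczyk) with the smoothing properties of the heat semigroup recorded in Lemma \ref{lem-semigroup} and the transport estimate Lemma \ref{lem-nD-nonlinearity}(ii)--(iii). First I would fix a small auxiliary exponent $\rho \in (0,1)$ and write, via the stochastic Fubini theorem, $Z_t = c_\rho \int_0^t (t-r)^{\rho-1} e^{\mu(t-r)\Delta} Y_r \, \d r$ with $Y_r = \sqrt{C_d\kappa}\sum_{k,i}\theta_k \int_0^r (r-s)^{-\rho} e^{\mu(r-s)\Delta}\Pi(\sigma_{k,i}\cdot\nabla f_s)\,\d W^{k,i}_s$. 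Taking $\rho > 1/p$, the deterministic factorization operator $r\mapsto \int_0^t(t-r)^{\rho-1}e^{\mu(t-r)\Delta}(\cdot)\,\d r$ maps $L^p(0,T;H^{-\beta})$ continuously into $C([0,T];H^{-\beta})$ (using Lemma \ref{lem-semigroup}(i) to absorb any loss of regularity into an integrable singularity, paying only a power of $\mu$), so it suffices to bound $\E\int_0^T \|Y_r\|_{H^{-\beta}}^p\,\d r$.

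The core is the pointwise moment bound on $Y_r$. By the Burkholder--Davis--Gundy inequality in the Hilbert space $H^{-\beta}$, $\E\|Y_r\|_{H^{-\beta}}^p \lesssim_p \E\Big(\kappa\sum_{k,i}\theta_k^2\int_0^r (r-s)^{-2\rho}\big\|e^{\mu(r-s)\Delta}\Pi(\sigma_{k,i}\cdot\nabla f_s)\big\|_{H^{-\beta}}^2\,\d s\Big)^{p/2}$. The key is to distribute the available dissipation: for each mode, interpolate the heat-kernel gain $\|e^{\mu(r-s)\Delta}g\|_{H^{-\beta}}\lesssim (\mu(r-s))^{-\lambda/2}\|g\|_{H^{-\beta-\lambda}}$ (Lemma \ref{lem-semigroup}(i)) against the transport bound $\|\Pi(\sigma_{k,i}\cdot\nabla f_s)\|_{H^{-1-\beta'}}\lesssim \|\sigma_{k,i}\|_{H^{d/2-\beta'}}\|f_s\|_{L^2}$ from Lemma \ref{lem-nD-nonlinearity}(ii). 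Since $\sigma_{k,i}=a_{k,i}e_k$ has $\|\sigma_{k,i}\|_{H^s}\sim |k|^s$, the norm $\sum_{k,i}\theta_k^2|k|^{2(d/2-\beta')}$ is finite exactly when $\beta' = \beta + \lambda - 1$ is chosen so that this exponent is controlled by $\|\theta\|_{\ell^\infty}$; the bookkeeping here is the familiar one (as in the proof of \cite[Corollary 2.6]{FGL21c}): one splits $\sum_{k,i}\theta_k^2|k|^{2\mu_0} \lesssim \|\theta\|_{\ell^2}^2 \max_k(\theta_k^2|k|^{2\mu_0})^{?}$... more precisely $\sum_k \theta_k^2|k|^{2\mu_0} \le \|\theta\|_{\ell^\infty}^{2(1-\mu_0\cdot 2/d)}\|\theta\|_{\ell^2}^{2\mu_0\cdot 2/d}\big(\sum_k |k|^{-d}\,{\bf 1}_{\ldots}\big)^{\ldots}$ — using $\|\theta\|_{\ell^2}=1$ — which produces exactly the factor $\|\theta\|_{\ell^\infty}^{2(\beta-\delta)/d}$ once $\lambda$ is tuned so that the leftover regularity deficit $\beta-\delta$ plays the role of $\mu_0$. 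Then $\|f_s\|_{L^2}\le R$, and the remaining $s$-integral $\int_0^r(r-s)^{-2\rho-\lambda}\,\d s$ converges provided $2\rho+\lambda<1$, contributing a bounded constant (and a power $\mu^{-\lambda}$); together with the $\kappa$ out front this yields $\E\|Y_r\|_{H^{-\beta}}^p \lesssim_{\delta,p} (\kappa\mu^{\delta-1})^{p/2}\|\theta\|_{\ell^\infty}^{p\cdot 2(\beta-\delta)/d} R^p$, uniformly in $r\in[0,T]$, and the $L^p(0,T)$ integration costs only a factor $T$.

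The main obstacle is the simultaneous juggling of three exponents — the factorization parameter $\rho$, the heat-smoothing order $\lambda$, and the regularity split $\beta'$ — under the competing constraints $\rho>1/p$, $2\rho+\lambda<1$, and $\lambda$ large enough that the Fourier multiplier sum $\sum_{k,i}\theta_k^2|k|^{2(d/2-\beta')}$ with $\beta'=\beta+\lambda-1$ can be estimated by $\|\theta\|_{\ell^\infty}^{2(\beta-\delta)/d}$ for the prescribed $\delta\in(0,\beta)$; one must check this feasible region is nonempty for every $\beta\in(0,d/2]$, which is where the hypothesis $\beta\le d/2$ (so that $d/2-\beta'$ stays in the admissible range of Lemma \ref{lem-nD-nonlinearity}) is used. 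Since this is exactly the computation carried out in \cite[Corollary 2.6]{FGL21c} for scalar $f$ and the only change is replacing $\|f_s\|_{H^{-b}}$-type norms by $\|f_s\|_{L^2}$ for vector fields (which Lemma \ref{lem-nD-nonlinearity} accommodates verbatim), I would simply track the dependence on $\mu$ and $\|\theta\|_{\ell^\infty}$ through that argument rather than reproduce it, and record the final bound.
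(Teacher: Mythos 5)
Your outer scaffold --- factorization to control the time supremum --- is a genuinely different route from the paper's. The paper defers to \cite[Corollary 2.6]{FGL21c}, and the sibling estimate Proposition \ref{prop-stoch-convol-1} shows what that argument looks like: a fixed-time BDG moment bound, a time-increment estimate, Kolmogorov's continuity criterion, and a final interpolation between two negative Sobolev scales. Both strategies can close (for factorization, the requirement $\rho>1/p$ together with $\rho<\delta/2$ forces $p>2/\delta$; that is harmless, since the claim for smaller $p$ follows from Jensen), so the difference here is stylistic.

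The inner moment bound, however, has a real gap. Lemma \ref{lem-nD-nonlinearity}(ii) is the wrong lever. It yields $\|\Pi(\sigma_{k,i}\cdot\nabla f_s)\|_{H^{-1-b}}\lesssim\|\sigma_{k,i}\|_{H^{d/2-b}}\|f_s\|_{L^2}\sim|k|^{d/2-b}\|f_s\|_{L^2}$, and since the lemma requires $b<d/2$ the exponent $d/2-b$ is strictly positive; you are then trying to control $\sum_k\theta_k^2|k|^{2(d/2-b)}$ uniformly. For a general $\theta$ with $\|\theta\|_{\ell^2}=1$ this sum can simply be infinite (take $\theta_k\propto|k|^{-(d+\varepsilon)/2}$ with $\varepsilon$ small: then $\|\theta\|_{\ell^\infty}$ stays bounded while the weighted sum diverges), and in particular it is not $O(\|\theta\|_{\ell^\infty}^{2(\beta-\delta)/d})$. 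The H\"older/interpolation you gesture at, of the form $\sum_k\theta_k^2|k|^{2\mu_0}\le\|\theta\|_{\ell^\infty}^{2(1-2\mu_0/d)}\|\theta\|_{\ell^2}^{\cdots}(\cdots)$, cannot be made to work: any split that pulls out $\|\theta\|_{\ell^\infty}$ still leaves a factor $\sum_k|k|^{2\mu_0/a}$ with a growing weight, which diverges.

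What makes the estimate go through is keeping the translation structure of $\sigma_{k,i}=a_{k,i}e_k$. Because $\sigma_{k,i}$ is divergence free, one has, as in \eqref{norm-transport-term},
\begin{equation*}
\|\sigma_{k,i}\cdot\nabla f_s\|_{H^{-\beta-\lambda}}=\|\nabla\cdot(\sigma_{k,i}\otimes f_s)\|_{H^{-\beta-\lambda}}\lesssim\|e_k f_s\|_{H^{-(\beta+\lambda-1)}}.
\end{equation*}
Choose $\lambda=1-\delta$ (this both produces the factor $\mu^{\delta-1}$ from the heat kernel and is compatible with $2\rho+\lambda<1$ when $\rho<\delta/2$) and set $s:=\beta+\lambda-1=\beta-\delta\in(0,d/2)$. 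Then
\begin{equation*}
\sum_{k\in\Z^d_0}\theta_k^2\|e_k f_s\|_{H^{-s}}^2
=\sum_{j\in\Z^d_0}|\hat f_{s,j}|^2\sum_{k\in\Z^d_0}\theta_k^2\,|k+j|^{-2s},
\end{equation*}
so the frequency weight now \emph{decays}, and the $\|\theta\|_{\ell^\infty}$ factor comes out of a simple split: for any $m\ge1$,
\begin{equation*}
\sum_k\theta_k^2|k+j|^{-2s}\le\|\theta\|_{\ell^\infty}^2\sum_{1\le|l|\le m}|l|^{-2s}+m^{-2s}\sum_k\theta_k^2\lesssim\|\theta\|_{\ell^\infty}^2 m^{d-2s}+m^{-2s},
\end{equation*}
and optimizing at $m=\|\theta\|_{\ell^\infty}^{-2/d}$ gives the bound $\|\theta\|_{\ell^\infty}^{4s/d}=\|\theta\|_{\ell^\infty}^{4(\beta-\delta)/d}$, uniformly in $j$. (This is precisely the role that Lemma \ref{lem-convolution sum}/Lemma \ref{cor-convolution sum} plays.) With that replacement your factorization argument closes; as written, the asserted moment bound on $\E\|Y_r\|_{H^{-\beta}}^p$ is not established.
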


Next, we give a slightly better estimate of stochastic convolution for the special noise with coefficients \eqref{theta-N-def}, in the case $\gamma>\frac{d-2}{2}$. This result will be uses in the proof of the second estimate in Theorem \ref{thm-main}. We write $Z_t^N$ if the coefficient $\theta$ is replaced by $\theta^N$.

\begin{proposition}\label{prop-stoch-convol-1}
  Let $\gamma>\frac{d-2}{2}$. For any $\beta \in (0, 1]$, $p \geq 1$ and $\delta \in (0,\beta )$, it holds
  $$\E \bigg[\sup_{t\in [0,T]} \|Z_{t}^N \|_{H^{-\beta}}^p \bigg]^{1/p} \lesssim_{\delta,p,T} \sqrt{\kappa\mu^{\delta-1} } \epsilon_N^{(\beta-\delta)/2} R.$$
\end{proposition}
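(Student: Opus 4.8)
The plan is to follow the proof of Proposition~\ref{prop-stoch-convol} (i.e.\ \cite[Corollary~2.6]{FGL21c}) essentially line for line; the only new ingredient is a sharper pointwise‑in‑time bound for the Hilbert--Schmidt norm of the noise coefficient, which exploits the explicit profile of $\theta^N$ together with $\gamma>\frac{d-2}{2}$. For $d=2$ nothing has to be proved, since $\|\theta^N\|_{\ell^\infty}=\sqrt{\epsilon_N}$ and $\frac{\beta-\delta}{d}=\frac{\beta-\delta}{2}$, so the statement is already contained in Proposition~\ref{prop-stoch-convol}. Thus I would assume $d=3$, and, since the case of a general $p\ge1$ follows from that of large $p$ by Jensen's inequality, I would fix $q$ large.

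First I would set up the factorization $Z_t^N=c_{\alpha'}\int_0^t(t-s)^{\alpha'-1}e^{\mu(t-s)\Delta}Y_s^N\,\d s$, with $Y_s^N=\sqrt{C_d\kappa}\sum_{k,i}\theta_k^N\int_0^s(s-\sigma)^{-\alpha'}e^{\mu(s-\sigma)\Delta}\Pi(\sigma_{k,i}\cdot\nabla f_\sigma)\,\d W_\sigma^{k,i}$ and $\alpha'\in(1/q,\delta/2)$ (a nonempty interval since $q>2/\delta$). Since $e^{t\Delta}$ is a contraction on $H^{-\beta}$, H\"older's inequality in time reduces $\E[\sup_{t\le T}\|Z_t^N\|_{H^{-\beta}}^q]$ to $\int_0^T\E\|Y_s^N\|_{H^{-\beta}}^q\,\d s$, and Burkholder--Davis--Gundy (using $\theta_k^N=\theta_{-k}^N$ to control the cross terms coming from \eqref{noise.1}) bounds $\E\|Y_s^N\|_{H^{-\beta}}^q$ by $\big(\kappa\int_0^s(s-\sigma)^{-2\alpha'}\,\mathcal I_{f_\sigma}(s-\sigma)\,\d\sigma\big)^{q/2}$ up to a constant, where $\mathcal I_g(r):=\sum_{k,i}(\theta_k^N)^2\big\|e^{\mu r\Delta}\Pi(\sigma_{k,i}\cdot\nabla g)\big\|_{H^{-\beta}}^2$; this is exactly the reduction of \cite{FGL21c}. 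Everything then comes down to proving, uniformly in $g\in H_\sigma$ and $r>0$,
  \begin{equation}\label{eq-plan-key}
  \mathcal I_g(r)\ \lesssim_\delta\ \epsilon_N^{\,\beta-\delta}\,(\mu r)^{\delta-1}\,\|g\|_{L^2}^2 ,
  \end{equation}
for then, using $\|f_\sigma\|_{L^2}\le R$ a.s.\ and $\alpha'<\delta/2$, the $\d\sigma$‑integral above is $\lesssim_{T,\delta}\kappa\epsilon_N^{\beta-\delta}\mu^{\delta-1}R^2$, and the proposition follows by integrating in $s$ and taking $q$‑th roots.

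To prove \eqref{eq-plan-key} I would note that $|\widehat{\Pi(\sigma_{k,i}\cdot\nabla g)}_{\,k+m}|\le|2\pi(a_{k,i}\cdot m)|\,|\hat g_m|$ and that $a_{k,i}\cdot m=a_{k,i}\cdot(k+m)$ (since $a_{k,i}\perp k$), so the gradient‑induced frequency factor is controlled by the \emph{output} frequency: $\sum_i|2\pi(a_{k,i}\cdot m)|^2\le|2\pi(k+m)|^2$. Putting $w_r(n):=|2\pi n|^{2-2\beta}e^{-2\mu r|2\pi n|^2}$ and $\phi_k:=(\theta_k^N)^2$, this gives $\mathcal I_g(r)\le\sum_m|\hat g_m|^2(\phi*w_r)(m)\le\|\phi\|_{\ell^{p'}}\|w_r\|_{\ell^p}\|g\|_{L^2}^2$ by Young's convolution inequality, for any conjugate pair $(p,p')$. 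I would choose $p=\frac{d}{2(\beta-\delta)}$ ($>\tfrac d2\ge1$): a Gaussian‑sum estimate yields $\|w_r\|_{\ell^p}\lesssim_\delta(\mu r)^{-(d/p+2-2\beta)/2}=(\mu r)^{\delta-1}$ for all $r>0$, while $\|\phi\|_{\ell^{p'}}=\epsilon_N\big(\sum_{1\le|k|\le N}|k|^{-2\gamma p'}\big)^{1/p'}$; evaluating this sum (distinguishing $2\gamma p'<d$, $=d$, $>d$) and using $\epsilon_N\sim N^{2\gamma-d}$ gives $\|\phi\|_{\ell^{p'}}\lesssim_\delta\epsilon_N^{\min\{2(\beta-\delta)/(d-2\gamma),\,1\}}$, up to a harmless logarithmic factor at $2\gamma p'=d$ (absorbed because $\epsilon_N$ decays polynomially in $N$). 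The hypothesis $\gamma>\frac{d-2}{2}$ is precisely what makes $d-2\gamma<2$, hence $\tfrac2{d-2\gamma}>1$ and $\min\{2(\beta-\delta)/(d-2\gamma),1\}\ge\beta-\delta$; since $\epsilon_N<1$ this yields \eqref{eq-plan-key}.

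I expect the sharp evaluation of $\|\phi\|_{\ell^{p'}}$ to be the only real difficulty. In Proposition~\ref{prop-stoch-convol} one only uses the crude interpolation bound $\|\phi\|_{\ell^{p'}}=\|\theta^N\|_{\ell^{2p'}}^2\le\|\theta^N\|_{\ell^\infty}^{2/p}=\epsilon_N^{2(\beta-\delta)/d}$, which discards the algebraic decay of $\theta^N$ and produces the weaker exponent $\frac{\beta-\delta}{d}$; replacing it by the exact computation, and verifying that $\gamma>\frac{d-2}{2}$ is exactly the threshold at which the resulting gain reaches $\beta-\delta$ (including the logarithmic borderline $2\gamma p'=d$), is the heart of the argument. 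Everything else — the factorization, the BDG estimate, and the consistent choice of the auxiliary exponents $\alpha',q,\delta$ — is routine and as in \cite{FGL21c}.
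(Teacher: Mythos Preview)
Your argument is correct, but the route differs from the paper's in two respects. First, for the supremum in time you use the Da~Prato--Kwapie\'n--Zabczyk factorization, whereas the paper obtains pointwise-in-time moment bounds via BDG and then upgrades to a supremum by a time-increment estimate and Kolmogorov's criterion. Second, and more substantively, you go for a one-shot estimate at $H^{-\beta}$: after the key Fourier observation $\sum_i|a_{k,i}\cdot m|^2\le|k+m|^2$ you apply Young's inequality $\|\phi\ast w_r\|_{\ell^\infty}\le\|\phi\|_{\ell^{p'}}\|w_r\|_{\ell^p}$ with the tailored choice $p=\tfrac{d}{2(\beta-\delta)}$, and then explicitly evaluate $\|\phi\|_{\ell^{p'}}$ case by case; the hypothesis $\gamma>\tfrac{d-2}{2}$ appears as the threshold at which the resulting exponent reaches $\beta-\delta$. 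The paper instead proves a \emph{single} endpoint bound in $H^{-1-\delta}$ carrying the full factor $\sqrt{\epsilon_N}$ --- here $\gamma>\tfrac{d-2}{2}$ enters cleanly through Lemma~\ref{cor-convolution sum} with $a=\gamma$, $b=1$, $c=0$, which needs $\gamma+1>\tfrac d2$ --- and then interpolates it against the general $H^{-\delta}$ bound of Proposition~\ref{prop-stoch-convol}. Your approach is self-contained and makes the dependence on $\gamma$ fully explicit (indeed it shows the exponent $\tfrac{\beta-\delta}{2}$ is what your Young estimate naturally produces once $d-2\gamma<2$), at the cost of the $2\gamma p'\lessgtr d$ case analysis; the paper's approach is more modular, reusing Proposition~\ref{prop-stoch-convol} and isolating the new ingredient as a single application of Lemma~\ref{cor-convolution sum}.
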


\begin{proof}
We follow the idea of proof of \cite[Corollary 2.6]{FGL21c}. For $\alpha>0$, by Burkholder-Davis-Gundy's inequality, we have
  \begin{align*}
  \E\big[ \| Z_t^N \|_{H^{-\alpha}}^{p} \big]^{\frac{1}{p}}
  &= \sqrt{C_d \kappa\, \epsilon_N}\, \E\bigg[ \Big\| \sum_{|k|\leq N} \sum_{i=1}^{d-1} \frac{1}{|k|^{\gamma}} \int_0^t e^{\mu(t-s) \Delta} \Pi (\sigma_{k,i} \cdot\nabla f_s)\, \d W^{k,i}_s \Big\|_{H^{-\alpha}}^{p} \bigg]^{\frac{1}{p}} \\
  & \lesssim \sqrt{\kappa\, \epsilon_N}\, \E\bigg[ \Big( \sum_{|k|\leq N} \sum_{i=1}^{d-1} |k|^{-2\gamma} \int_0^t \| e^{\mu(t-s) \Delta} \Pi (\sigma_{k,i} \cdot\nabla f_s) \|_{H^{-\alpha}}^2 \,\d s \Big)^{p/2} \bigg]^{\frac{1}{p}} \\
  & \lesssim \sqrt{\kappa\, \epsilon_N \mu^{\delta-1} }\, \E\bigg[ \Big( \sum_{|k|\leq N} \sum_{i=1}^{d-1} |k|^{-2\gamma} \int_0^t (t-s)^{\delta-1} \| \sigma_{k,i} \cdot\nabla f_s \|_{H^{-\alpha-1+\delta}}^2 \,\d s \Big)^{p/2} \bigg]^{\frac{1}{p}},
  \end{align*}
where in the last step we used Lemma \ref{lem-semigroup}(i). By the definition of $\sigma_{k,i}$, we know
  \begin{equation} \label{norm-transport-term}
  \| \sigma_{k,i} \cdot\nabla f_s \|_{H^{-\alpha-1+\delta}} =\| \nabla \cdot (\sigma_{k,i} \otimes f_s) \|_{H^{-\alpha-1+\delta}}  \lesssim \|e_k \, f_s\|_{H^{-\alpha+\delta}}.
  \end{equation}
Thus, we have
  $$ \aligned
  \E\big[ \|Z_t^N \|_{H^{-\alpha}}^{p} \big]^{\frac{1}{p}}
  & \lesssim \sqrt{\kappa\, \epsilon_N \mu^{\delta-1}t^{\delta}}\, \E\bigg[ \sup_{s\in [0,T]} \Big( \sum_{|k|\leq N} \sum_{i=1}^{d-1}|k|^{-2\gamma} \| \sigma_{k,i} \cdot\nabla f_s \|_{H^{-\alpha-1+\delta}}^2 \Big)^{p/2} \bigg]^{\frac{1}{p}} \\
  & \lesssim \sqrt{\kappa\, \epsilon_N \mu^{\delta-1} t^{\delta}}\, \E\bigg[ \sup_{s\in[0,T]} \Big( \sum_{|k|\leq N}  \sum_{l \in \Z_0^d} |k|^{-2\gamma} |l|^{-2(\alpha-\delta)} |\<f_s, e_{k-l}\>|^2 \Big)^{p/2} \bigg]^{\frac{1}{p}} .
  \endaligned $$
Taking $\alpha=1+\delta$, due to $\gamma+1>\frac{d}{2}$, we can apply Lemma \ref{cor-convolution sum} with $a=\gamma$, $b=1$ and $c=0$ to obtain
  \begin{equation} \label{proof-convol-1}
    \E\big[ \|Z_t^N \|_{H^{-\delta-1}}^{p} \big]^{\frac{1}{p}} \lesssim \sqrt{\kappa\, \epsilon_N \mu^{\delta-1} t^{\delta}} \, \E \bigg[\sup_{t \in [0,T]} \|f_t\|_{L^2}^p \bigg]^{\frac{1}{p}} \leq \sqrt{\kappa \epsilon_N \mu^{\delta-1} t^{\delta}}\, R.
  \end{equation}

Next, observing that by construction $Z^N$ satisfies
  $$ Z_t^N=e^{\mu(t-s) \Delta} Z_s^N +\sqrt{C_d\, \kappa}\sum_{k,i} \theta_k \int_s^t e^{\mu(t-r) \Delta} \Pi(\sigma_{k,i} \cdot\nabla f_r) \,\d W^{k,i}_r.$$
  We denote the second term on the right-hand side as $Z_{s,t}^N$. By Lemma \ref{lem-semigroup}(ii), we have
  \begin{align} \label{proof-convol-Z}
    \|Z_t^N-Z_s^N\|_{H^{-2\delta-1}} & \lesssim \big\| \big(I-e^{\mu (t-s)\Delta}\big)Z_s^N \big\|_{H^{-2\delta-1}} +\big\| Z_{s,t}^N \big\|_{H^{-2\delta-1}} \nonumber \\
    & \lesssim \mu^{\delta/2} |t-s|^{\delta/2} \big\| Z_s^N \big\|_{H^{-\delta-1}} + \big\| Z_{s,t}^N \big\|_{H^{-2\delta-1}}.
  \end{align}
Similar to the estimate \eqref{proof-convol-1}, we have the estimate of $Z_{s,t}^N$
  $$ \E \big[ \big\| Z_{s,t}^N \big\|_{H^{-2\delta-1}}^{2p} \big]^{\frac{1}{2p}} \lesssim \sqrt{\kappa\, \epsilon_N \mu^{2\delta-1}}\, |t-s|^{\delta} R.$$
Taking expectation on both sides of the inequality \eqref{proof-convol-Z} and by the previous estimates we obtain
  $$ \E \Big[ \big\| Z_{t}^N-Z_{s}^N\big\|_{H^{-2\delta-1}}^{2p} \Big]^{\frac{1}{2p}} \lesssim \sqrt{\kappa\, \epsilon_N \mu^{2\delta-1}}\, |t-s|^{\delta/2} R .$$
Renaming $2\delta$ as $\delta$ give us
  $$ \E \Big[ \big\| Z_{t}^N-Z_{s}^N\big\|_{H^{-\delta-1}}^{2p} \Big] \lesssim \big(\sqrt{\kappa\, \epsilon_N \mu^{\delta-1}} R \big)^{2p} |t-s|^{p \delta/2}.$$
Choosing $p> \max\{1,\frac{2}{\delta}\}$ and applying Kolmogorov's continuity criterion, we obtain
  \begin{equation} \label{proof-convol-2}
    \E\bigg[ \sup_{t \in [0,T]} \|Z_t^N \|_{H^{-\delta-1}}^{p} \bigg]^{\frac{1}{p}} \lesssim \sqrt{\kappa\, \epsilon_N \mu^{\delta-1}}\, R.
  \end{equation}
Similar to the proof of \cite[Corollary 2.6]{FGL21c}, we have
  \begin{equation} \label{proof-convol-3}
    \E\bigg[ \sup_{t\in [0,T]} \|Z_t^N \|_{H^{-\delta}}^{p} \bigg]^{\frac{1}{p}} \lesssim \sqrt{\kappa \mu^{\delta-1}} R.
  \end{equation}
  Setting $\lambda=\beta-\delta$, then $\lambda \in (0,1)$ and $\beta=\lambda(1+\delta)+(1-\lambda)\delta$. By interpolating between \eqref{proof-convol-2} and \eqref{proof-convol-3}, we obtain the desired result.
\end{proof}

\section{Quantitative convergence rates} \label{sec-proof-converg-rates}

The purpose of this section is to prove Theorem \ref{thm-main}. We first briefly recall the setting: we consider the following equations
  \begin{equation} \label{regular-approx-eq}
  \d u^N + \Pi(v^N \cdot \nabla u^N)\,\d t =\sqrt{C_{d} \kappa} \, \sum_{k,i} \theta_{k}^N \Pi(\sigma_{k,i} \cdot \nabla u^N) \, \d W^{k,i}_t + S_{\theta^N}^{(d)}(u^N) \, \d t
  \end{equation}
with $v^N= K(u^N)= (1+ (-\Delta)^{\gamma_0})^{-1} (u^N)$ and $u^N|_{t=0}= u_0 \in H_\sigma$. Because of Theorem \ref{thm-Ito-corrector}, we rewrite \eqref{regular-approx-eq} as
  $$\aligned
  \d u^N + \Pi(v^N \cdot \nabla u^N) \,\d t &= \sqrt{C_{d} \kappa} \, \sum_{k,i} \theta_{k}^N \Pi(\sigma_{k,i} \cdot \nabla u^N) \, \d W^{k,i}_t \\
  &\quad + \Big(S_{\theta^N}^{(d)} (u^N)- C_d^{\prime} \kappa\Delta u^N \Big)\,\d t +  C_d^{\prime} \kappa\Delta u^N \,\d t,
  \endaligned $$
where $C'_d$ is defined in \eqref{C-d-prime}. Denote $P_t= e^{C_d^{\prime} \kappa t\Delta}\, (t\geq 0)$ the semigroup generated by the Stokes operator. We write the above equation in mild form as
  $$u^N_t = P_t u_0 - \int_0^t P_{t-s} \Pi(v^N_s\cdot \nabla u^N_s)\,\d s + \int_0^t P_{t-s} \Big(S_{\theta^N}^{(d)} (u^N_s)- C_d^{\prime} \kappa\Delta u^N_s\Big)\,\d s + Z^N_t, $$
where $Z_t^N$ is the stochastic convolution
  $$Z_t^N= \sqrt{C_d \kappa} \sum_{k,i} \theta_k^N \int_0^t P_{t-s} \Pi(\sigma_{k,i} \cdot \nabla u^N_s) \, \d W^{k,i}_s. $$
By Theorem \ref{thm-Ito-corrector}, we expect equation \eqref{regular-approx-eq} will tend to the limit equation \eqref{regular-limit-eq}:
  $$\partial_t \tilde u + \Pi(\tilde v\cdot \nabla \tilde u) = C'_d \kappa \Delta \tilde u $$
with $\tilde v= K(\tilde u)$ and $\tilde u_0 = u_0$. In mild form, the equation reads as
  $$\tilde u_t = P_t u_0 - \int_0^t P_{t-s} \Pi(\tilde v_s\cdot \nabla\tilde u_s)\,\d s .$$

Now we start proving Theorem \ref{thm-main}. For simplicity, we assume $\kappa\geq 1$; the argument will be slightly different when $0<\kappa<1$. From the mild formulations one deduces
  $$u_t^N - \tilde u_t = -\int_0^t P_{t-s} \Pi (v_s^N \cdot \nabla u_s^N -\tilde v_s\cdot \nabla\tilde u_s) \,\d s + \int_0^t P_{t-s} \Big(S_{\theta^N}^{(d)} (u_s^N)- C_d^{\prime} \kappa\Delta u_s^N \Big)\,\d s + Z_t^N, $$
and thus
  \begin{equation}\label{main-difference}
  \aligned
  \|u_t^N -\tilde u_t\|_{H^{-\alpha}} &\leq \bigg\| \int_0^t P_{t-s} \Pi (v_s^N \cdot \nabla u_s^N -\tilde v_s\cdot \nabla\tilde u_s) \,\d s \bigg\|_{H^{-\alpha}} \\
  &\quad + \bigg\| \int_0^t P_{t-s} \Big(S_{\theta^N}^{(d)} (u_s^N)- C_d^{\prime} \kappa\Delta u_s^N \Big)\,\d s \bigg\|_{H^{-\alpha}} + \|Z_t^N \|_{H^{-\alpha}}.
  \endaligned
  \end{equation}
We denote the first two terms on the right-hand side by $I_{1,N} (t)$ and $I_{2,N}(t)$, respectively.

\begin{lemma}\label{lem-main-nonlin}
  For $\gamma_0 \in (\frac{d-2}{4},\frac{d+2}{4})$, $\alpha \in (0, (2\gamma_0) \wedge \frac{d}{2})$ and $q>\max\{2,\frac{4}{4\gamma_0-d+2}\}$, we have
    $$ I_{1,N}^q(t) \lesssim_{T.q,\gamma_0} \|u_0 \|_{L^2}^{q-2} \int_0^t \bigg(\frac{\|u_0 \|_{L^2}^2}{\kappa^{q(\frac{d+2}{4}-\gamma_0)}} + \frac{\|\tilde u_s \|_{H^1}^2}{\kappa^{q-1}} \bigg)\|u_s^N -\tilde u_s\|_{H^{-\alpha}}^q\,\d s. $$
\end{lemma}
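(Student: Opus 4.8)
The plan is to substitute the identity $v^N_s\cdot\nabla u^N_s-\tilde v_s\cdot\nabla\tilde u_s=(v^N_s-\tilde v_s)\cdot\nabla u^N_s+\tilde v_s\cdot\nabla(u^N_s-\tilde u_s)$ into the mild representation of $I_{1,N}(t)$, so that $I_{1,N}(t)\le J_1(t)+J_2(t)$, where $J_1(t)$ and $J_2(t)$ are the time convolutions of $P_{t-s}=e^{C_d'\kappa(t-s)\Delta}$ against $\Pi\big((v^N_s-\tilde v_s)\cdot\nabla u^N_s\big)$ and $\Pi\big(\tilde v_s\cdot\nabla(u^N_s-\tilde u_s)\big)$, respectively. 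I expect $J_1$ — which carries the difference through the smoothed factor $v^N-\tilde v=K(u^N-\tilde u)$ — to produce the $\kappa^{-q(\frac{d+2}4-\gamma_0)}$ contribution, and $J_2$ — which carries the difference through $\nabla(u^N-\tilde u)$ against the smooth drift $\tilde v=K(\tilde u)$ — to produce the $\kappa^{-(q-1)}\|\tilde u_s\|_{H^1}^2$ contribution. The only facts used about the two solutions are the a priori energy bound $\|u^N_s\|_{L^2}\le\|u_0\|_{L^2}$ and the estimate \eqref{energy-ineq} for $\tilde u$.

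For $J_2$: since $\gamma_0>\frac{d-2}4$ we have $1+2\gamma_0>\frac d2>\alpha$, so $\tilde v_s\in H^{1+2\gamma_0}$ with $\|\tilde v_s\|_{H^{1+2\gamma_0}}\le\|\tilde u_s\|_{H^1}$, and Lemma~\ref{lem-nD-nonlinearity}(i) with $a=1+2\gamma_0$, $b=\alpha$ gives $\|\tilde v_s\cdot\nabla(u^N_s-\tilde u_s)\|_{H^{-1-\alpha}}\lesssim\|\tilde u_s\|_{H^1}\|u^N_s-\tilde u_s\|_{H^{-\alpha}}$. I would then apply the maximal‑regularity estimate Lemma~\ref{lem-semigroup}(iii) with $a=-1-\alpha$, $\mu=C_d'\kappa$ to get $J_2(t)^2\lesssim\kappa^{-1}\int_0^t\|\tilde u_s\|_{H^1}^2\|u^N_s-\tilde u_s\|_{H^{-\alpha}}^2\,\d s$, and raise this to the power $q/2\ge1$ by Jensen's inequality against the measure $\|\tilde u_s\|_{H^1}^2\,\d s$ of total mass $W:=\int_0^t\|\tilde u_s\|_{H^1}^2\,\d s$, which yields $J_2(t)^q\lesssim\kappa^{-q/2}W^{q/2-1}\int_0^t\|\tilde u_s\|_{H^1}^2\|u^N_s-\tilde u_s\|_{H^{-\alpha}}^q\,\d s$. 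The key point is now the Poincaré inequality on zero‑mean fields, $\|\tilde u_s\|_{H^1}^2\lesssim\|\nabla\tilde u_s\|_{L^2}^2$, which together with \eqref{energy-ineq} gives $W\lesssim\kappa^{-1}\|u_0\|_{L^2}^2$, hence $W^{q/2-1}\lesssim\kappa^{-(q/2-1)}\|u_0\|_{L^2}^{q-2}$; this upgrades the naive $\kappa^{-q/2}$ to the required $\kappa^{-(q-1)}$ and reproduces the second term.

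For $J_1$: set $b:=\big(\tfrac d2-2\gamma_0+\alpha\big)\vee 0\in[0,\tfrac d2)$. Since $\alpha<2\gamma_0$, the operator $K$ gains $2\gamma_0$ derivatives, so $\|v^N_s-\tilde v_s\|_{H^{d/2-b}}\le\|v^N_s-\tilde v_s\|_{H^{2\gamma_0-\alpha}}\le\|u^N_s-\tilde u_s\|_{H^{-\alpha}}$, while $\|u^N_s\|_{L^2}\le\|u_0\|_{L^2}$; Lemma~\ref{lem-nD-nonlinearity}(ii) then gives $\|(v^N_s-\tilde v_s)\cdot\nabla u^N_s\|_{H^{-1-b}}\lesssim\|u^N_s-\tilde u_s\|_{H^{-\alpha}}\|u_0\|_{L^2}$. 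The semigroup $P_{t-s}$ must now lift $H^{-1-b}$ to $H^{-\alpha}$, i.e.\ gain $m:=1+b-\alpha=1+\tfrac d2-2\gamma_0$ derivatives; the range $(\tfrac{d-2}4,\tfrac{d+2}4)$ for $\gamma_0$ makes $m\in(0,2)$, so Lemma~\ref{lem-semigroup}(i) gives $J_1(t)\lesssim\kappa^{-m/2}\|u_0\|_{L^2}\int_0^t(t-s)^{-m/2}\|u^N_s-\tilde u_s\|_{H^{-\alpha}}\,\d s$ with $m/2=\tfrac{d+2}4-\gamma_0<1$. Since this time kernel is integrable on $[0,T]$, a Jensen step against $(t-s)^{-m/2}\,\d s$ gives $J_1(t)^q\lesssim_{T,q,\gamma_0}\kappa^{-qm/2}\|u_0\|_{L^2}^q\int_0^t(t-s)^{-m/2}\|u^N_s-\tilde u_s\|_{H^{-\alpha}}^q\,\d s$, and $qm/2=q(\tfrac{d+2}4-\gamma_0)$ is exactly the exponent sought; the residual factor $(t-s)^{-m/2}$ is harmless in the subsequent (singular) Gr\"onwall argument of Section~\ref{sec-proof-converg-rates}, and when $\gamma_0\ge\tfrac d4$ it can be removed outright by rewriting $(v^N_s-\tilde v_s)\cdot\nabla u^N_s=\nabla\cdot\big((v^N_s-\tilde v_s)\otimes u^N_s\big)$ and using Lemma~\ref{lem-semigroup}(iii) instead.

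The main obstacle is the $J_1$ estimate: the intermediate Sobolev index has to be tuned precisely to $\tfrac d2-b=2\gamma_0-\alpha$ so that the $2\gamma_0$ smoothing of $K$ is spent exactly on matching $\|v^N_s-\tilde v_s\|$ to $\|u^N_s-\tilde u_s\|_{H^{-\alpha}}$ (with the leftover $L^2$-norm of $u^N_s$ absorbed into $\|u_0\|_{L^2}$), after which the smoothing order of $P_{t-s}$ is forced to be $\tfrac{d+2}4-\gamma_0$; it is precisely the standing hypothesis $\gamma_0>\tfrac{d-2}4$ that keeps this exponent below $1$ so that the time integral converges and the $q$-th power may be taken. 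The secondary delicate point, as noted above, is the Poincaré/energy trick in $J_2$: without the dissipative gain of $\kappa^{-1}$ in $W$ one would only reach $\kappa^{-q/2}$ there, which for $\gamma_0<\tfrac d4$ is genuinely weaker than the $J_1$ exponent and would break the Grönwall balance needed for Theorem~\ref{thm-main}.
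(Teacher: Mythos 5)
Your decomposition of $I_{1,N}$ into $J_1$ (with $v^N-\tilde v$ through $K$) and $J_2$ (with $\nabla(u^N-\tilde u)$ against $\tilde v$) is exactly the paper's split into $I_{1,1,N}$ and $I_{1,2,N}$, and your $J_2$ estimate is essentially identical to the paper's: Lemma~\ref{lem-nD-nonlinearity}(i) with $a=1+2\gamma_0$, $b=\alpha$, the maximal regularity Lemma~\ref{lem-semigroup}(iii) to gain $\kappa^{-1/2}$, and H\"older/Jensen against $\|\tilde u_s\|_{H^1}^2\,\d s$ followed by \eqref{energy-ineq} to turn $W=\int_0^t\|\tilde u_s\|_{H^1}^2\,\d s$ into $\kappa^{-1}\|u_0\|_{L^2}^2$. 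That part is correct and matches the paper.

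There is, however, a genuine gap in $J_1$. Your Jensen step against the singular measure $(t-s)^{-m/2}\,\d s$ yields a bound of the form
\[
J_1(t)^q\lesssim \kappa^{-qm/2}\|u_0\|_{L^2}^q\int_0^t (t-s)^{-m/2}\|u^N_s-\tilde u_s\|_{H^{-\alpha}}^q\,\d s,
\]
which still carries the singular kernel inside the time integral, whereas the lemma as stated has a kernel-free $\int_0^t$. You declare the residual kernel ``harmless'', but this changes the conclusion being proved, and the paper's proof of Theorem~\ref{thm-main} then applies standard (not singular) Gr\"onwall to the lemma. The paper instead applies H\"older's inequality with exponents $q$ and $q'$: $\int_0^t(t-s)^{-(1-\eps_1)}\|u^N_s-\tilde u_s\|_{H^{-\alpha}}\,\d s\le(\int_0^t(t-s)^{-(1-\eps_1)q'}\,\d s)^{1/q'}(\int_0^t\|u^N_s-\tilde u_s\|_{H^{-\alpha}}^q\,\d s)^{1/q}$. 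The first factor is finite precisely because $(1-\eps_1)q'<1$, which is equivalent to $q>1/\eps_1=4/(4\gamma_0-d+2)$ (when $\gamma_0<d/4$). This is where the standing hypothesis on $q$ is actually used — not, as you suggest, merely to guarantee integrability of the kernel (which only needs $\gamma_0>(d-2)/4$). By switching from Jensen to H\"older you would both remove the singular kernel and make visible the role of the $q$-constraint.

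A secondary point: your choice $b=(\tfrac d2-2\gamma_0+\alpha)\vee 0$ can land on $b=0$ when $\gamma_0\ge\tfrac d4+\tfrac\alpha2$, but Lemma~\ref{lem-nD-nonlinearity}(ii) is stated only for $b\in(0,\tfrac d2)$; the Sobolev embedding $\|V\|_{L^{d/b}}\lesssim\|V\|_{H^{d/2-b}}$ in its proof fails at $b=0$. The paper avoids this by setting $\eps_1=\min\{\tfrac12,\gamma_0-\tfrac{d-2}4\}$ and $b=\alpha+1-2\eps_1$, which stays strictly positive in all cases (equal to $\alpha$ when $\gamma_0\ge\tfrac d4$, using Poincar\'e to pass from $H^{d/2-\alpha}$ to $H^{2\gamma_0-\alpha}$). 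Your proposed workaround via $\nabla\cdot\big((v^N-\tilde v)\otimes u^N\big)$ and Lemma~\ref{lem-semigroup}(iii) does repair this, but you should integrate it into the main line of the argument rather than mention it as an aside.
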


\begin{proof}
  We follow the idea of proof of \cite[Proposition 3.4]{FGL21c}. It holds that
    $$\aligned
    I_{1,N}(t)&\leq \bigg\| \int_0^t P_{t-s} \Pi ((v_s^N-\tilde v_s)\cdot \nabla u_s^N) \,\d s \bigg\|_{H^{-\alpha}} + \bigg\| \int_0^t P_{t-s} \Pi (\tilde v_s \cdot \nabla (u_s^N-\tilde u_s)) \,\d s \bigg\|_{H^{-\alpha}} \\
    &=: I_{1,1,N}(t) +  I_{1,2,N}(t) .
    \endaligned $$
  First, for some $\eps_1\in (\frac{1}{q},1)$ applying Lemma \ref{lem-semigroup}(i) with $\delta=2-2\eps_1> 0$, we have
    $$\aligned
    I_{1,1,N}(t) &\leq \int_0^t \big\| P_{t-s} \Pi ((v_s^N-\tilde v_s)\cdot \nabla u_s^N) \big\|_{H^{-\alpha}} \,\d s \\
    &\lesssim \int_0^t \frac1{(\kappa(t-s))^{1-\eps_1} }\big\| \Pi ((v_s^N-\tilde v_s)\cdot \nabla u_s^N) \big\|_{H^{-\alpha-2+2\eps_1}} \,\d s \\
    &\leq \frac1{\kappa^{1-\eps_1}} \int_0^t \frac1{(t-s)^{1-\eps_1}} \big\| (v_s^N-\tilde v_s)\cdot \nabla u_s^N \big\|_{H^{-\alpha-2+2\eps_1}} \,\d s,
    \endaligned $$
  where the last step is due to that, for any $a\in \R$, the operator norm of $\Pi$ in $H^{a}$ is less than 1.

  If $\frac{d-2}{4}<\gamma_0<\frac{d}{4}$, let $\eps_1 =\gamma_0 -\frac{d-2}{4}>\frac{1}{q}>0$.
  Applying Lemma \ref{lem-nD-nonlinearity}(ii) with $b= \alpha+1-2\eps_1 =\alpha +\frac{d}{2}-2\gamma_0 \in (0,\frac{d}{2})$, and noting that $\frac{d}{2} -b= 2\gamma_0 -\alpha$, we get
    $$\aligned
    I_{1,1,N}(t) &\lesssim \frac{1}{\kappa^{1-\eps_1}} \int_0^t \frac{1}{(t-s)^{1-\eps_1}} \|v_s^N-\tilde v_s \|_{H^{2\gamma_0-\alpha}} \|u_s^N \|_{L^2} \,\d s \\
    &\leq \frac{\|u_0 \|_{L^2}}{\kappa^{1-\eps_1}} \int_0^t \frac{1}{(t-s)^{1-\eps_1}} \|u_s^N-\tilde u_s \|_{H^{-\alpha}} \,\d s,
    \endaligned $$
  where we have used the fact that $\P$-a.s. for all $s\geq 0$, $\|u_s^N \|_{L^2}\leq \|u_0 \|_{L^2}$.

  If $ \frac{d}{4} \leq \gamma_0 < \frac{d+2}{4}$, let $\eps_1 =\frac{1}{2}>\frac{1}{q}>0$.
  Applying Lemma \ref{lem-nD-nonlinearity}(ii) with $b= \alpha+1-2\eps_1 =\alpha \in (0,\frac{d}{2})$, and noting that $\frac{d}{2} -b=\frac{d}{2}-\alpha \leq  2\gamma_0 -\alpha$, we get
    $$\aligned
    I_{1,1,N}(t) &\lesssim \frac{1}{\kappa^{1-\eps_1}} \int_0^t \frac{1}{(t-s)^{1-\eps_1}} \|v_s^N-\tilde v_s \|_{H^{\frac{d}{2}-\alpha}} \|u_s^N \|_{L^2} \,\d s \\
    &\lesssim \frac{1}{\kappa^{1-\eps_1}} \int_0^t \frac{1}{(t-s)^{1-\eps_1}} \|v_s^N-\tilde v_s \|_{H^{2\gamma_0-\alpha}} \|u_s^N \|_{L^2} \,\d s ,
    \endaligned $$
  where in the last step we have used Poincare inequality. By the fact $\|u_s^N \|_{L^2}\leq \|u_0 \|_{L^2}$ $\P$-a.s. for all $s\geq 0$, we have
    $$
    I_{1,1,N}(t) \leq \frac{\|u_0 \|_{L^2}}{\kappa^{1-\eps_1}} \int_0^t \frac{1}{(t-s)^{1-\eps_1}} \|u_s^N-\tilde u_s \|_{H^{-\alpha}} \,\d s.
    $$

  Combining above discussions and by H\"older's inequality with exponents $\frac1q+ \frac1{q'}=1$, it holds
    $$\aligned
    I_{1,1,N}(t) &\lesssim \frac{\|u_0 \|_{L^2}}{\kappa^{1-\eps_1}}  \bigg( \int_0^t \frac{\d s}{(t-s)^{(1-\eps_1)q'}} \bigg)^{1/q'} \bigg( \int_0^t \|u_s^N-\tilde u_s \|_{H^{-\alpha}}^q \,\d s \bigg)^{1/q} \\
    &\lesssim_{T,q,\gamma_0}\frac{\|u_0 \|_{L^2}}{\kappa^{1-\eps_1}} \bigg( \int_0^t \|u_s^N -\tilde u_s \|_{H^{-\alpha}}^q \,\d s \bigg)^{1/q}.
    \endaligned $$
  The second step is due to $(1-\eps_1)q'<1$ which is a consequence of $\eps_1=\min \{\frac{1}{2},\gamma_0-\frac{d-2}{4}\} >\frac{1}{q}$.

  Now we turn to estimate the second term. By Lemma \ref{lem-semigroup}(iii), we have
    $$\aligned
    I_{1,2,N}(t) &\lesssim \bigg( \frac1\kappa \int_0^t \big\| \Pi (\tilde v_s \cdot \nabla (u_s^N-\tilde u_s))\big\|_{H^{-\alpha-1}}^2 \,\d s \bigg)^{1/2} \\
    &\le \frac1{\sqrt \kappa} \bigg( \int_0^t \| \tilde v_s \cdot \nabla (u_s^N-\tilde u_s) \|_{H^{-\alpha-1}}^2 \,\d s \bigg)^{1/2} .
    \endaligned $$
  Applying the first result in Lemma \ref{lem-nD-nonlinearity}(i) with $a=1+2\gamma_0>\frac{d}{2}$ and $b=\alpha<\frac{d}{2}$, we arrive at
    $$\aligned
    I_{1,2,N}(t) &\lesssim \frac1{\sqrt \kappa} \bigg( \int_0^t \| \tilde v_s\|_{H^{1+2\gamma_0}}^2 \|u_s^N-\tilde u_s \|_{H^{-\alpha}}^2 \,\d s \bigg)^{1/2} \\
    &\lesssim \frac1{\sqrt \kappa} \bigg( \int_0^t \| \tilde u_s\|_{H^{1}}^2 \|u_s^N -\tilde u_s \|_{H^{-\alpha}}^2 \,\d s \bigg)^{1/2}.
    \endaligned $$
  For $q>2$, H\"older's inequality yields
    $$\aligned
    I_{1,2,N}(t) &\lesssim \frac1{\sqrt \kappa} \bigg( \int_0^t \| \tilde u_s\|_{H^{1}}^2 \,\d s \bigg)^{\frac{q-2}{2q}} \bigg( \int_0^t \| \tilde u_s \|_{H^{1}}^2 \|u^N_s-\tilde u_s \|_{H^{-\alpha}}^q \,\d s \bigg)^{1/q} \\
    &\lesssim_{T,q} \frac{\|u_0 \|_{L^2}^{1-2/q}}{\kappa^{1-1/q}} \bigg( \int_0^t \| \tilde u_s\|_{H^{1}}^2 \|u_s^N -\tilde u_s \|_{H^{-\alpha}}^q \,\d s \bigg)^{1/q} ,
    \endaligned $$
  where in the second step we have used the estimate \eqref{energy-ineq}.  Combining the above estimates we finish the proof.
\end{proof}

\begin{lemma}\label{lem-main-nonlin-2}
  For $\alpha \in (0,1]$ and any $0 < \delta <\alpha$, we have
    \begin{equation} \label{eq:lem-main-nonlin-2.1}
      I_{2,N}(t) \lesssim_{\delta,T} \kappa^{\delta/2} \epsilon_N^{(\alpha-\delta)/d} \|u_0\|_{L^2}.
    \end{equation}
  Furthermore, if $\gamma>\frac{d-2}{2}$, we have
  \begin{equation} \label{eq:lem-main-nonlin-2.2}
      I_{2,N}(t) \lesssim_{\delta,T} \kappa^{\delta/2} \epsilon_N^{(\alpha-\delta)/2} \|u_0\|_{L^2}.
  \end{equation}
  \end{lemma}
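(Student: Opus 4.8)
The plan is to start from the mild-formulation bound
$$ I_{2,N}(t) \le \int_0^t \big\| P_{t-s}\big(S_{\theta^N}^{(d)}(u_s^N)-C_d^{\prime}\kappa\Delta u_s^N\big)\big\|_{H^{-\alpha}}\,\d s $$
and interpolate between the smoothing estimate of Lemma \ref{lem-semigroup}(i) (applied to the rescaled semigroup $P_t=e^{C_d^{\prime}\kappa t\Delta}$) and the corrector bound of Theorem \ref{thm-Ito-corrector}. Given $\delta\in(0,\alpha)$, I would run Theorem \ref{thm-Ito-corrector} with $b=0$ (legitimate since $u_s^N\in L^2$ with $\|u_s^N\|_{L^2}\le\|u_0\|_{L^2}$ $\P$-a.s.) and regularizing index $\alpha-\delta\in(0,1)$, which gives
$$ \big\|S_{\theta^N}^{(d)}(u_s^N)-C_d^{\prime}\kappa\Delta u_s^N\big\|_{H^{-2-(\alpha-\delta)}} \lesssim \kappa\, D_N^{\alpha-\delta}\,\|u_0\|_{L^2}. $$
Then, writing the gap between $H^{-2-(\alpha-\delta)}$ and $H^{-\alpha}$ as $2-\delta$, Lemma \ref{lem-semigroup}(i) yields $\|P_{t-s}\varphi\|_{H^{-\alpha}}\lesssim_\delta(\kappa(t-s))^{-(1-\delta/2)}\|\varphi\|_{H^{-2-(\alpha-\delta)}}$, and the exponent $1-\delta/2$ is strictly below $1$ because $\delta<1$, so $\int_0^t(\kappa(t-s))^{-(1-\delta/2)}\,\d s=\frac2\delta\,\kappa^{-(1-\delta/2)}t^{\delta/2}$. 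Multiplying the two estimates, the powers of $\kappa$ collapse to $\kappa^{\delta/2}$ and I obtain $I_{2,N}(t)\lesssim_{\delta,T}\kappa^{\delta/2}D_N^{\alpha-\delta}\|u_0\|_{L^2}$ (pathwise, for a.e.\ $\omega$).

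For the first assertion \eqref{eq:lem-main-nonlin-2.1} it then remains to insert $D_N\lesssim\epsilon_N^{1/d}$, which is Proposition \ref{pro-D_N} with $q=1/d$; this choice is admissible because $0<\gamma<d/2$ ensures $1/d<1\wedge\frac1{d-2\gamma}$. Since $\alpha-\delta>0$ this gives $D_N^{\alpha-\delta}\lesssim\epsilon_N^{(\alpha-\delta)/d}$, as wanted.

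The improved bound \eqref{eq:lem-main-nonlin-2.2} will follow once I sharpen the estimate on $D_N$ to $D_N\lesssim\epsilon_N^{1/2}$ under the hypothesis $\gamma>\frac{d-2}2$. This I would get directly by Cauchy--Schwarz in the defining sum,
$$ \sum_{1\le|k|\le N}\frac1{|k|^{2\gamma+1}} \le \bigg(\sum_{1\le|k|\le N}\frac1{|k|^{2\gamma}}\bigg)^{1/2}\bigg(\sum_{k\in\Z_0^d}\frac1{|k|^{2\gamma+2}}\bigg)^{1/2}, $$
whose second factor is finite exactly because $2\gamma+2>d$; since the first factor is $\epsilon_N^{-1/2}$, this gives $D_N=\epsilon_N\sum_{1\le|k|\le N}|k|^{-2\gamma-1}\lesssim\epsilon_N^{1/2}$, hence $D_N^{\alpha-\delta}\lesssim\epsilon_N^{(\alpha-\delta)/2}$, and \eqref{eq:lem-main-nonlin-2.2} drops out from the same chain.

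No genuine difficulty is expected here; the only care needed is in the exponent bookkeeping --- verifying $\alpha-\delta\in[0,1]$ so Theorem \ref{thm-Ito-corrector} applies, $1-\delta/2<1$ so the time integral converges, and tracking the powers of $\kappa$ --- together with spotting that the summability threshold $2\gamma+2>d$ is precisely what makes the sharper rate $\epsilon_N^{1/2}$ available in the regime $\gamma>\frac{d-2}2$.
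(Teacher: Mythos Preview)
Your proof is correct and follows essentially the same approach as the paper's: bound the integrand via Lemma \ref{lem-semigroup}(i) and Theorem \ref{thm-Ito-corrector} with $b=0$, integrate the resulting singularity in time, and then convert the $D_N$ power into an $\epsilon_N$ power via Proposition \ref{pro-D_N}. Your version is in fact slightly more direct: the paper introduces an auxiliary parameter $0<\delta'<\delta$ (running the semigroup step with $\delta'$ and then applying Proposition \ref{pro-D_N} with $q=\frac{\alpha-\delta}{d(\alpha-\delta')}$ or $q=\frac{\alpha-\delta}{2(\alpha-\delta')}$), whereas you work with $\delta$ throughout and invoke Proposition \ref{pro-D_N} at the fixed endpoint values $q=1/d$ and $q=1/2$; your Cauchy--Schwarz argument for $D_N\lesssim\epsilon_N^{1/2}$ is exactly the $q=1/2$ case of that proposition.
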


  \begin{proof}
  By the semigroup property, for some $0<\delta'<\delta<\alpha$,
    $$\aligned
    I_{2,N}(t)&\leq \int_0^t \Big\| P_{t-s} \Big(S_{\theta^N}^{(d)}(u_s^N)- C_d^{\prime} \kappa\Delta u_s^N \Big)\Big\|_{H^{-\alpha}} \,\d s \\
    &\lesssim \int_0^t \frac1{(\kappa(t-s))^{1-\delta'/2}} \Big\| S_{\theta^N}^{(d)} (u_s^N)- C_d^{\prime} \kappa\Delta u_s^N \Big\|_{H^{-\alpha-2+\delta'}} \,\d s .
    \endaligned $$
  Applying Theorem \ref{thm-Ito-corrector} with $b=0$ and $\alpha-\delta'$ in place of $\alpha$, we obtain
    $$\aligned
    I_{2,N}(t)&\lesssim  C\kappa D_N^{\alpha-\delta'} \int_0^t \frac{\|u_s^N \|_{L^2}}{(\kappa(t-s))^{1-\delta'/2}}  \,\d s
    \leq C \kappa^{\delta'/2} D_N^{\alpha-\delta'} \int_0^t \frac{\|u_0\|_{L^2}}{(t-s)^{1-\delta'/2}} \,\d s \\
    & \lesssim_{\delta',T} \kappa^{\delta'/2} D_N^{\alpha-\delta'} \|u_0\|_{L^2} \leq \kappa^{\delta/2} D_N^{\alpha-\delta'} \|u_0\|_{L^2}.
    \endaligned $$
Since $0<\delta'<\delta<\alpha$, we have $\frac{\alpha-\delta}{\alpha-\delta'}<1$.
For general case, we apply Proposition \ref{pro-D_N} with $q=\frac{\alpha-\delta}{d(\alpha-\delta')}$ to obtain $D_N^{\alpha-\delta'} \lesssim \epsilon_N^{(\alpha-\delta)/d}$.
For $\gamma>\frac{d-2}{2}$, applying Proposition \ref{pro-D_N} with $q=\frac{\alpha-\delta}{2(\alpha-\delta')} <\frac{1}{d-2\gamma}$, we have $D_N^{\alpha-\delta'} \lesssim \epsilon_N^{(\alpha-\delta)/2}$. Combining these we get the desired result.
\end{proof}

Finally, we are ready to prove Theorem \ref{thm-main}.

\begin{proof}[Proof of Theorem \ref{thm-main}]
By \eqref{main-difference} and Lemma \ref{lem-main-nonlin}, for any $q>\max\{2,\frac{4}{4\gamma_0-d+2}\}$ we have
  $$\|u_t^N -\tilde u_t\|_{H^{-\alpha}}^q \lesssim_{T,q,\gamma_0} \|u_0 \|_{L^2}^{q-2} \int_0^t \bigg(\frac{\|u_0 \|_{L^2}^2}{\kappa^{q(\frac{d+2}{4}-\gamma_0)}} + \frac{\|\tilde u_s \|_{H^1}^2}{\kappa^{q-1}} \bigg)\|u^N_s -\tilde u_s\|_{H^{-\alpha}}^q\,\d s + I_{2,N}^q(t) + \|Z_t^N \|_{H^{-\alpha}}^q.$$
Gronwall's inequality implies
  $$ \sup_{t\leq T} \|u_t^N -\tilde u_t\|_{H^{-\alpha}}^q \lesssim \sup_{t\leq T} \big[I_{2,N}^q(t)+ \|Z_t^N \|_{H^{-\alpha}}^q \big] \exp\bigg[ C \|u_0 \|_{L^2}^{q-2}\!\! \int_0^T \! \!\bigg(\frac{\|u_0 \|_{L^2}^2}{\kappa^{q(\frac{d+2}{4}-\gamma_0)}} + \frac{\|\tilde u_s \|_{H^1}^2}{\kappa^{q-1}} \bigg) \d s \bigg],$$
where the constant $C=C(T,q,\gamma_0)>0$. Using the energy estimate \eqref{energy-ineq} of $\tilde u$, we have
  $$\int_0^T\! \!\bigg(\frac{\|u_0 \|_{L^2}^2}{\kappa^{q(\frac{d+2}{4}-\gamma_0)}} + \frac{\|\tilde u_s \|_{H^1}^2}{\kappa^{q-1}} \bigg) \d s \lesssim \frac{\|u_0 \|_{L^2}^2}{\kappa^{q(\frac{d+2}{4}-\gamma_0)}} T + \frac{\|u_0 \|_{L^2}^2}{\kappa^{q}} = \frac{\|u_0 \|_{L^2}^2}{\kappa^{q}} \big(\kappa^{q (\gamma_0-\frac{d-2}{4})} T+1 \big), $$
which is a deterministic quantity. Inserting this estimate into the above inequality leads to
  \begin{equation} \label{eq:proof-main}
    \sup_{t\leq T} \|u_t^N -\tilde u_t\|_{H^{-\alpha}}^q \lesssim \bigg[ \sup_{t\leq T} I_{2,N}^q(t) + \sup_{t\leq T} \|Z_t^N \|_{H^{-\alpha}}^q \bigg] \exp\bigg[ C \frac{\|u_0 \|_{L^2}^q}{\kappa^{q}} \big(\kappa^{q (\gamma_0-\frac{d-2}{4})} T+1 \big) \bigg].
  \end{equation}

In the general case $\gamma\in (0,\frac d2)$, by \eqref{eq:lem-main-nonlin-2.1} and applying Proposition \ref{prop-stoch-convol} with $\beta=\alpha<\frac{d}{2}$, $p=q$, $\delta \in (0,\alpha)$ and $R=\|u_0 \|_{L^2}$, we obtain
  $$\E \bigg[ \sup_{t\leq T} I_{2,N}^q(t) \bigg]+ \E\bigg[\sup_{t\leq T} \|Z_t^N \|_{H^{-\alpha}}^q \bigg] \lesssim_{\delta, q, T} \kappa^{q\delta/2} \epsilon_N^{q(\alpha-\delta)/d} \|u_0\|_{L^2}^q .$$
Taking expectation on both sides of the inequality \eqref{eq:proof-main}, we obtain the desired result.

For $\gamma>\frac{d-2}{2}$, using Proposition \ref{prop-stoch-convol-1} with $\beta=\alpha<1$, $p=q$, $\delta \in (0,\alpha)$ and $R=\|u_0 \|_{L^2}$ and by the estimate of $I_{2,N}$ as \eqref{eq:lem-main-nonlin-2.2}, we can get the corresponding estimate.
\end{proof}

\section{Central limit theorem} \label{sec-CLT}

This section is devoted to the proof of the CLT with explicit rate of strong convergence. We start again by recalling the setting: $u^N$ and $\tilde{u}$ are solutions to equations \eqref{eq-inviscid-Leray} and \eqref{regular-limit-eq}, respectively; set
  $$U^N = (u^N-\tilde{u} ) / \sqrt{\epsilon_N},$$
which satisfies the following equation in a weak sense
  \begin{equation} \label{CLT-approx-eq}
  \aligned
  \d U^N + \Pi \big( V^N \cdot \nabla u^N + \tilde{v} \cdot \nabla U^N \big) \, \d t
  &= C_d^{\prime}\kappa \Delta U^N \, \d t + \frac{1}{\sqrt{\epsilon_N}} \big( S_{\theta^N}^{(d)} (u^N)-C_d^{\prime}\kappa \Delta u^N \big) \, \d t \\
   &\quad + \sqrt{C_d \kappa} \sum_{|k| \leq N} \sum_{i=1}^{d-1} \frac{1}{|k|^{\gamma}} \Pi \bigl( \sigma_{k,i} \cdot \nabla u^N \bigr)\, \d W_{t}^{k,i},
  \endaligned
  \end{equation}
where $V^N = K(U^N)= (v^N-\tilde{v} ) / \sqrt{\epsilon_N}$. Letting $N \to \infty$, the limit equation would be
\begin{equation} \label{CLT-limit-eq}
  \d U  + \Pi \left( V \cdot \nabla \tilde{u} + \tilde{v} \cdot \nabla U \right) \d t =C_d^{\prime} \kappa \Delta U \, \d t  + \sqrt{C_d \kappa} \sum_{k,i} \frac{1}{|k|^{\gamma}} \Pi(\sigma_{k,i}\cdot \nabla \tilde{u} )\, \d W_{t}^{k,i},
\end{equation}
where $V= K(U)$. We will first show the well-posedness of \eqref{CLT-limit-eq} and then prove the convergence of $U^N$ to $U$. In this section we assume that
  \begin{equation} \label{condition-gamma-0}
  \frac{d-2}{4}<\gamma_0<\frac{d+2}{4} \quad \mbox{and} \quad 2\gamma_0+\gamma>\frac{d}{2}.
  \end{equation}

\subsection{Well-posedness of the limit equation} \label{subs-limit-eq}

This subsection deals with the well-posedness of  the limit equation \eqref{CLT-limit-eq}, which is also interpreted in mild form:
\begin{equation}\label{eq:CLT-limit-eq-mild}
  U_t = - \int_0^t P_{t-s} \big[ \Pi \big( V_s \cdot\nabla \tilde{u}_s + \tilde{v}_s \cdot\nabla U_s \big) \big]\,\d s +\tilde{Z}_t,
\end{equation}
where $P_t= e^{C_d^{\prime} \kappa t\Delta}\, (t\geq 0)$ and the stochastic convolution $\tilde{Z}_t$ is
\begin{equation} \label{CLT-Z_t}
  \tilde{Z}_t:= \sqrt{C_d \kappa} \sum_{k,i} \frac{1}{|k|^{\gamma}} \int_{0}^{t} P_{t-s} \bigl[ \Pi \big(\sigma_{k,i} \cdot \nabla \tilde{u}_s \big) \bigr] \d W_{s}^{k,i}.
\end{equation}
First we study the regularity of the stochastic convolution $\{\tilde Z_t\}_{t\ge 0}$.

\begin{lemma}\label{lem:stoch-convol}
  For any $ \beta >\frac{d}{2}-\gamma$ and $p\in [1,\infty)$, it holds
  $ \E \big[ \|\tilde{Z} \|_{C^0 H^{-\beta}}^p \big] < +\infty$.
\end{lemma}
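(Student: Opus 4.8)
The plan is to estimate the stochastic convolution $\tilde Z_t$ using the Burkholder--Davis--Gundy inequality together with the smoothing property of the semigroup $P_t = e^{C'_d\kappa t\Delta}$, in exactly the spirit of Proposition \ref{prop-stoch-convol-1} and \cite[Corollary 2.6]{FGL21c}, the only new ingredient being that here the coefficients $\frac{1}{|k|^\gamma}$ are \emph{not} truncated, so the sum over $k$ must converge on its own. First I would fix $\beta > \frac{d}{2}-\gamma$ and choose a small $\delta>0$ with $\delta<\beta$; by BDG and then Lemma \ref{lem-semigroup}(i) with exponent $1-\delta$, one gets, for $p\ge 1$,
\[
  \E\big[\|\tilde Z_t\|_{H^{-\beta}}^p\big]^{1/p} \lesssim \sqrt{\kappa}\,\E\bigg[\Big(\sum_{k,i}\frac{1}{|k|^{2\gamma}}\int_0^t (t-s)^{\delta-1}\big\|\sigma_{k,i}\cdot\nabla\tilde u_s\big\|_{H^{-\beta-1+\delta}}^2\,\d s\Big)^{p/2}\bigg]^{1/p}.
\]
Using $\sigma_{k,i}=a_{k,i}e_k$ and the divergence-free property as in \eqref{norm-transport-term}, $\|\sigma_{k,i}\cdot\nabla\tilde u_s\|_{H^{-\beta-1+\delta}}\lesssim \|e_k\,\tilde u_s\|_{H^{-\beta+\delta}}$, so the spatial sum is controlled by $\sum_{k,l}|k|^{-2\gamma}|l|^{-2(\beta-\delta)}|\langle\tilde u_s,e_{k-l}\rangle|^2$.

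Next I would invoke Lemma \ref{cor-convolution sum} with $a=\gamma$, $b=\beta-\delta$ and $c=0$: this requires $\gamma,\beta-\delta\in(0,d/2)$ and $\gamma+(\beta-\delta)>d/2$, which holds for $\delta$ small since $\beta>\frac d2-\gamma$ (and if $\beta-\delta\ge d/2$ the bound is only easier, by monotonicity of $|l|^{-2(\beta-\delta)}$). This yields the pointwise-in-$s$ bound $\sum_{k,l}\cdots\lesssim\|\tilde u_s\|_{L^2}^2$, hence
\[
  \E\big[\|\tilde Z_t\|_{H^{-\beta}}^p\big]^{1/p}\lesssim \sqrt{\kappa}\,\Big(\int_0^t(t-s)^{\delta-1}\,\d s\Big)^{1/2}\sup_{s\le T}\|\tilde u_s\|_{L^2}\lesssim_{\delta,T}\sqrt{\kappa}\,\|u_0\|_{L^2},
\]
using the energy estimate \eqref{energy-ineq}. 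This gives a finite pointwise bound; to upgrade to $C^0_tH^{-\beta}_x$ I would run the standard factorization/Kolmogorov argument used in Proposition \ref{prop-stoch-convol-1}: write $\tilde Z_t = e^{C'_d\kappa(t-s)\Delta}\tilde Z_s + \tilde Z_{s,t}$, bound $\|(I-e^{C'_d\kappa(t-s)\Delta})\tilde Z_s\|_{H^{-\beta-\delta'}}\lesssim (t-s)^{\delta'/2}\|\tilde Z_s\|_{H^{-\beta}}$ via Lemma \ref{lem-semigroup}(ii), estimate the increment $\tilde Z_{s,t}$ in $H^{-\beta-\delta'}$ exactly as above getting a factor $|t-s|^{\delta'/2}$, then apply Kolmogorov's continuity criterion with $p$ large to get $\E[\|\tilde Z\|_{C^0H^{-\beta-\delta'}}^p]<\infty$; since $\beta$ was an arbitrary number $>\frac d2-\gamma$ and $\delta'$ arbitrarily small, relabelling gives the claim for every $\beta>\frac d2-\gamma$.

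The only mild obstacle is making sure the sum over $k\in\Z_0^d$ genuinely converges without the cutoff $|k|\le N$ that was available in Proposition \ref{prop-stoch-convol-1}; this is precisely what the hypothesis $\beta>\frac d2-\gamma$ buys through Lemma \ref{cor-convolution sum} (the condition $\gamma+\beta>d/2$), so no new estimate is needed --- one only has to check that the convolution lemma applies with $c=0$. Everything else is a routine repetition of the BDG $+$ semigroup $+$ Kolmogorov scheme already used twice in Section 3.
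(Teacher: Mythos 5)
Your proposal matches the paper's proof essentially step for step: both fix a small $\eps$ (your $\delta$) with $0<\eps<\beta+\gamma-\tfrac{d}{2}$, apply BDG plus the smoothing bound from Lemma \ref{lem-semigroup}(i), reduce the spatial sum to $\sum_{k,l}|k|^{-2\gamma}|l|^{-2(\beta-\eps)}|\langle\tilde u_s,e_{k-l}\rangle|^2$, invoke Lemma \ref{cor-convolution sum} with $a=\gamma$, $b=\beta-\eps$, $c=0$, and then run the increment/Kolmogorov argument via the decomposition $\tilde Z_t=P_{t-s}\tilde Z_s+\tilde Z_{s,t}$ and Lemma \ref{lem-semigroup}(ii), relabelling $\beta-\eps$ as $\beta$ at the end. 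The only cosmetic difference is that the paper reduces at the outset to $\beta<d/2$ by embedding, whereas you instead note that the $|l|^{-2(\beta-\delta)}$ weight is monotone in $\beta$; both handle the same point correctly.
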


\begin{proof}
It is enough to consider the case $\beta <\frac{d}{2}$.
Fix $0<\eps < \beta+\gamma-\frac{d}{2}$; by Burkholder-Davis-Gundy's inequality, we have
\begin{align*}
\E\big[ \|\tilde{Z}_t \|_{H^{-\beta}}^{2p} \big]
&= |C_d \kappa|^p \E\bigg[ \Big\| \sum_{k,i} |k|^{-\gamma} \int_0^t P_{t-s} \Pi (\sigma_{k,i} \cdot\nabla \tilde{u}_s)\, \d W^{k,i}_s \Big\|_{H^{-\beta}}^{2p} \bigg] \\
&\lesssim \E\bigg[ \Big( \sum_{k,i} |k|^{-2\gamma} \int_0^t \| P_{t-s} \Pi (\sigma_{k,i} \cdot\nabla \tilde{u}_s) \|_{H^{-\beta}}^2 \,\d s \Big)^{p} \bigg] \\
&\lesssim \E\bigg[ \Big( \sum_{k,i} |k|^{-2\gamma} \int_0^t (t-s)^{\eps-1} \| \sigma_{k,i} \cdot\nabla \tilde{u}_s \|_{H^{-\beta-1+\epsilon}}^2 \,\d s \Big)^{p} \bigg],
\end{align*}
where in the last step we used the properties of semigroup.
Similar to \eqref{norm-transport-term}, we have
  \[ \sum_{k,i} |k|^{-2\gamma} \| \sigma_{k,i} \cdot\nabla \tilde{u}_s \|_{H^{-\beta-1+\eps}}^2
  \lesssim \sum_k |k|^{-2\gamma} \|e_k \, \tilde{u}_s\|_{H^{-\beta+\eps}} \le \sum_{k,l \in \Z_0^d} |k|^{-2\gamma} |l|^{-2(\beta-\eps)} |\<\tilde{u}_s, e_{k-l}\>|^2.\]
Applying Lemma \ref{cor-convolution sum} with $a=\gamma$, $b=\beta-\eps$ and $c=0$ we have
\[ \sum_{k,i} |k|^{-2\gamma} \| \sigma_{k,i} \cdot \nabla \tilde{u}_s \|_{H^{-\beta-1+\eps}}^2 \lesssim_{\beta, \gamma, \eps} \|\tilde{u}_s \|_{L^2}^2. \]
Substituting this estimate into the above inequality yields
\[ \E\big[ \|\tilde{Z}_t \|_{H^{-\beta}}^{2p} \big]^{1/2p}
\lesssim \E\bigg[ \Big( \int_0^t (t-s)^{\eps-1} \|\tilde{u}_s \|_{L^2}^2 \,\d s \Big)^{p} \bigg]^{1/2p} \lesssim_{\eps} t^{\eps/2} \|u_0 \|_{L^2}. \]
A similar computation gives us
\[ \E\bigg[ \Big\| \sum_{k,i} \frac{1}{|k|^{\gamma}} \int_{s}^{t} P_{t-r} \bigl[ \Pi \big( \sigma_{k,i} \cdot \nabla \tilde{u}_r \big) \bigr] \d W_{r}^{k,i} \Big\|_{H^{-\beta}}^{2p} \bigg]^{1/2p} \lesssim |t-s|^{\eps/2} \|u_0 \|_{L^2}.\]

Next, observing that by definition $\tilde{Z}$ satisfies
$$\tilde{Z}_t = P_{t-s} \tilde{Z}_s + \sum_{k,i} \frac{1}{|k|^{\gamma}} \int_{s}^{t} P_{t-r} \bigl[ \Pi \big( \sigma_{k,i} \cdot \nabla \tilde{u}_r \big) \bigr] \d W_{r}^{k,i} ; $$
and thus
  $$\tilde{Z}_t-\tilde{Z}_s = (P_{t-s} -I) \tilde Z_s + \sum_{k,i} \frac{1}{|k|^{\gamma}} \int_{s}^{t} P_{t-r} \bigl[ \Pi \big( \sigma_{k,i} \cdot \nabla \tilde{u}_r \big) \bigr] \d W_{r}^{k,i}. $$
By Lemma \ref{lem-semigroup}(ii), we obtain
\begin{align*}
\|\tilde{Z}_t-\tilde{Z}_s\|_{H^{-\beta-\eps}}
& \leq \| (I- P_{t-s}) \tilde{Z}_s\|_{H^{-\beta-\eps}} + \Big\| \sum_{k,i} \frac{1}{|k|^{\gamma}} \int_{s}^{t} P_{t-r} \bigl[ \Pi \big( \sigma_{k,i} \cdot \nabla \tilde{u}_r \big) \bigr] \d W_{r}^{k,i} \Big\|_{H^{-\beta-\eps}} \\
& \lesssim |t-s|^{\eps/2} \| \tilde{Z}_s\|_{H^{-\beta}} + \Big\| \sum_{k,i} \frac{1}{|k|^{\gamma}} \int_{s}^{t} P_{t-r} \bigl[ \Pi \big( \sigma_{k,i} \cdot \nabla \tilde{u}_r \big) \bigr] \d W_{r}^{k,i} \Big\|_{H^{-\beta}}.
\end{align*}
Taking expectation and applying the previous estimates we arrive at
\begin{align*}
\E\big[ \| \tilde{Z}_t-\tilde{Z}_s\|_{H^{-\beta-\eps}}^{2p}\big]^{1/2p}
& \lesssim_{p,T} |t-s|^{\eps/2}  \|u_0 \|_{L^2} .
\end{align*}
Choosing $p > 1/\eps$ and applying Kolmogorov's continuity criterion, we obtain the desired result, up to renaming $\beta -\eps$ as $\beta$.
\end{proof}

Inspired by the mild formulation \eqref{eq:CLT-limit-eq-mild} and Lemma \ref{lem:stoch-convol}, given a trajectory $z\in C^0_t H^{-\beta}_x$, we turn to the study of the \textit{analytic} equation
\begin{equation}\label{eq:CLT-limit-analytic}
y_t = - \int_0^t P_{t-s} \Pi \big[ K(y_s)\cdot\nabla \tilde{u}_s + K(\tilde{u}_s) \cdot \nabla y_s \big]\,\d s +z_t.
\end{equation}
Recall that $K=(1+(-\Delta)^{\gamma_0} )^{-1}$ is a linear operator.

\begin{proposition}\label{prop:CLT-limit-analytic}
  For $\frac{d}{2}-\gamma<\beta< (2\gamma_0) \wedge \frac{d}{2}$ and any $z\in C^0_t H^{-\beta}_{x}$, there exists a unique solution $y\in C^0_t H^{-\beta}_x$ to \eqref{eq:CLT-limit-analytic}.
  Moreover, the solution map $z\mapsto y=:Sz$ is a linear bounded operator and there exists $C>0$ and $q>\max \{ 2, \frac{4}{4\gamma_0-d+2} \}$ such that
  \begin{equation}\label{eq:CLT-limit-estim}
  \| S z\|_{C^0 H^{-\beta}} \lesssim \exp\big(C(1+T)\| u_0\|_{L^2}^{q} \big)\, \| z \|_{C^0 H^{-\beta}}.
  \end{equation}
\end{proposition}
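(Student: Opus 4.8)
The plan is to treat \eqref{eq:CLT-limit-analytic} as a \emph{linear} equation in $y$, with $z$ and the fixed field $\tilde u$ as data: existence will come from a fixed-point argument on short time intervals, while uniqueness and the quantitative bound \eqref{eq:CLT-limit-estim} will follow from a singular Gr\"onwall inequality obtained after raising the $H^{-\beta}$-norm to the power $q>\max\{2,\tfrac4{4\gamma_0-d+2}\}$. Throughout I would use the two structural facts $\|K(w)\|_{H^s}\le\|w\|_{H^{s-2\gamma_0}}$ (for all $s\in\R$) and the energy bound \eqref{energy-ineq}, which yields $\|\tilde u_s\|_{L^2}\le\|u_0\|_{L^2}$ and $\int_0^T\|\tilde u_s\|_{H^1}^2\,\d s<\infty$; note also that $y_s$ stays divergence free (being $\Pi$ of something plus $z_s$), so $K(y_s),K(\tilde u_s)$ are legitimate inputs to Lemma \ref{lem-nD-nonlinearity}.

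The core is two estimates for the nonlinear contributions, parallel to those in Lemma \ref{lem-main-nonlin}. For the term carrying the derivative on $\tilde u$, pick $\eps_1\in(\tfrac1q,\gamma_0-\tfrac{d-2}4)$; apply Lemma \ref{lem-semigroup}(i) with $\delta=2-2\eps_1$ to pull out the kernel $(\kappa(t-s))^{-(1-\eps_1)}$, then Lemma \ref{lem-nD-nonlinearity}(ii) with $b=\beta+1-2\eps_1$ (which can be kept in $(0,\tfrac d2)$, see below), followed by $\|K(y_s)\|_{H^{d/2-b}}\le\|y_s\|_{H^{-\beta}}$ and $\|\tilde u_s\|_{L^2}\le\|u_0\|_{L^2}$, to obtain
\[
  \Big\|\int_0^t P_{t-s}\Pi\big(K(y_s)\cdot\nabla\tilde u_s\big)\,\d s\Big\|_{H^{-\beta}}\lesssim\frac{\|u_0\|_{L^2}}{\kappa^{1-\eps_1}}\int_0^t\frac{\|y_s\|_{H^{-\beta}}}{(t-s)^{1-\eps_1}}\,\d s .
\]
For the term carrying the derivative on $y$, I would instead use the maximal $L^2$-in-time regularity estimate Lemma \ref{lem-semigroup}(iii), which gains exactly one derivative without a pointwise singular kernel, then Lemma \ref{lem-nD-nonlinearity}(i) with $a=1+2\gamma_0>\tfrac d2$ (valid precisely because $\gamma_0>\tfrac{d-2}4$) and $b=\beta<\tfrac d2$, together with $\|K(\tilde u_s)\|_{H^{1+2\gamma_0}}\le\|\tilde u_s\|_{H^1}$, to get
\[
  \Big\|\int_0^t P_{t-s}\Pi\big(K(\tilde u_s)\cdot\nabla y_s\big)\,\d s\Big\|_{H^{-\beta}}^2\lesssim\frac1\kappa\int_0^t\|\tilde u_s\|_{H^1}^2\,\|y_s\|_{H^{-\beta}}^2\,\d s .
\]

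With these in hand I would raise $\|y_t\|_{H^{-\beta}}$ to the power $q$, insert the two bounds, and use H\"older in time: for the first term via $(1-\eps_1)q'<1$ (with $\tfrac1q+\tfrac1{q'}=1$, using $\eps_1>\tfrac1q$), for the second via the split $\|\tilde u_s\|_{H^1}^2=(\|\tilde u_s\|_{H^1}^2)^{\frac{q-2}q}(\|\tilde u_s\|_{H^1}^2)^{\frac2q}$ together with $\int_0^T\|\tilde u_s\|_{H^1}^2\,\d s<\infty$, exactly as in the proof of Lemma \ref{lem-main-nonlin}. This gives $\|y_t\|_{H^{-\beta}}^q\lesssim\|z\|_{C^0 H^{-\beta}}^q+\int_0^t g(s)\,\|y_s\|_{H^{-\beta}}^q\,\d s$ with $g\in L^1(0,T)$ and $\int_0^T g\lesssim(1+T)\|u_0\|_{L^2}^q$ (up to constants depending on $\kappa,T,q,\gamma_0$), so Gr\"onwall's lemma yields \eqref{eq:CLT-limit-estim}; taking $z=0$ gives uniqueness, and linearity of $z\mapsto Sz$ is then immediate from uniqueness. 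For existence, the same estimates show that the right-hand side of \eqref{eq:CLT-limit-analytic} is a contraction on $C([0,\tau];H^{-\beta}_x)$ once $\tau$ is small (the constants carry factors $\tau^{c}$ with $c>0$ and $(\int_0^\tau\|\tilde u_s\|_{H^1}^2\,\d s)^{1/2}$, both tending to $0$); since $\tau$ can be fixed uniformly using the uniform continuity of $t\mapsto\int_0^t\|\tilde u_s\|_{H^1}^2\,\d s$ on $[0,T]$, concatenating finitely many steps produces a solution on $[0,T]$.

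The main obstacle is the term $K(\tilde u_s)\cdot\nabla y_s$: the transported object $y_s$ lies only in the negative space $H^{-\beta}$ while $\tilde u_s$ is merely of class $L^2_tH^1_x$, so one cannot afford a pointwise singular kernel \emph{on top of} a factor $\|\tilde u_s\|_{H^1}$; this is what forces the detour through the $L^2$-maximal-regularity estimate and the arrangement that the Gr\"onwall integrand be exactly $\|\tilde u_s\|_{H^1}^2\in L^1_t$. A secondary point needing care is checking that one \emph{single} $\eps_1$ can be taken so that simultaneously $\eps_1\in(\tfrac1q,\gamma_0-\tfrac{d-2}4)$ (nonempty because $q>\tfrac4{4\gamma_0-d+2}$ and $\gamma_0<\tfrac{d+2}4$) and $b=\beta+1-2\eps_1\in(0,\tfrac d2)$ — positivity from $\eps_1<\tfrac{\beta+1}2$ (possible since $q>2$) and the upper bound from $\beta<2\gamma_0$; this is precisely where the standing hypotheses $\gamma_0\in(\tfrac{d-2}4,\tfrac{d+2}4)$, $\beta\in(0,2\gamma_0\wedge\tfrac d2)$ and $q>\max\{2,\tfrac4{4\gamma_0-d+2}\}$ are consumed.
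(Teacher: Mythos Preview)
Your proposal is correct and uses exactly the same two core estimates as the paper: Lemma~\ref{lem-semigroup}(i) followed by Lemma~\ref{lem-nD-nonlinearity}(ii) for the term $K(y_s)\cdot\nabla\tilde u_s$, and Lemma~\ref{lem-semigroup}(iii) followed by Lemma~\ref{lem-nD-nonlinearity}(i) (with $a=1+2\gamma_0$, $b=\beta$) for the term $K(\tilde u_s)\cdot\nabla y_s$. The derivation of \eqref{eq:CLT-limit-estim} via H\"older and Gr\"onwall is also the same.

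The one genuine methodological difference is how existence is obtained. You run a contraction on a short interval $[0,\tau]$ and concatenate, using that both $\tau^{\eps_1}\|u_0\|_{L^2}$ and $\big(\int_{t_0}^{t_0+\tau}\|\tilde u_s\|_{H^1}^2\,\d s\big)^{1/2}$ can be made uniformly small. The paper instead works on the whole interval $[0,T]$ at once by introducing the weighted norm
\[
  \|y\|_\lambda:=\sup_{t\in[0,T]}\, e^{-\lambda a(t)}\|y_t\|_{H^{-\beta}},\qquad a(t)=\int_0^t\|\tilde u_s\|_{H^1}^2\,\d s+t\,\|u_0\|_{L^2}^{1/\eps},
\]
and shows $\Gamma$ is a contraction in $\|\cdot\|_\lambda$ for $\lambda$ large: the $K(\tilde u)\cdot\nabla y$ term contributes a factor $\lambda^{-1}$, while the $K(y)\cdot\nabla\tilde u$ term contributes $\lambda^{-2\eps}$ after a change of variables in the convolution integral. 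This avoids the bookkeeping of restarting the mild formulation (where on $[t_0,t_0+\tau]$ one must replace $z_t$ by $z_t+P_{t-t_0}(y_{t_0}-z_{t_0})$), and gives the contraction in one stroke; on the other hand, your approach is more elementary and makes the role of the smallness parameters more transparent. Either route is fine.

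One small remark on your parameter discussion: when $\gamma_0\ge\tfrac d4$ the upper bound $\gamma_0-\tfrac{d-2}4$ exceeds $\tfrac12$, and then taking $\eps_1$ close to this upper bound could violate $b=\beta+1-2\eps_1>0$ for small $\beta$. The paper handles this by simply setting $\eps=\min\{\tfrac12,\gamma_0-\tfrac{d-2}4\}$; with your phrasing you should say explicitly that $\eps_1$ is chosen in the (nonempty) intersection $(\tfrac1q,\gamma_0-\tfrac{d-2}4)\cap(\tfrac{2\beta+2-d}{4},\tfrac{\beta+1}{2})$, whose nonemptiness is exactly what your final paragraph checks.
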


\begin{proof}
  Let us define $a(t):=\int_0^t \| \tilde{u}_s\|_{H^1}^2 \, \d s+ t \|u_0\|_{L^2}^{\eta} $, the constant $\eta$ is to be determined; and endow $C^0_t H^{-\beta}_x$ with the equivalent norm
  \begin{equation*}
  \| y\|_\lambda := \sup_{t\in [0,T]} \big\{ e^{-\lambda a(t)} \| y_t\|_{H^{-\beta}} \big\}
  \end{equation*}
  for a suitable $\lambda>0$ to be chosen later. Define a map $\Gamma$ on $C^0_t H^{-\beta}_{x}$ by
  \begin{equation*}
  (\Gamma y)_t := - \int_0^t P_{t-s} \Pi \big[ K( y_s)\cdot\nabla \tilde{u}_s + K(\tilde{u}_s)\cdot\nabla y_s \big]\,\d s +z_t.
  \end{equation*}
  We are going to show that $\Gamma$ is a contraction on $(C^0_t H^{-\beta}_x, \| \cdot\|_\lambda)$ for some large $\lambda>0$, which immediately implies existence and uniqueness of solutions to \eqref{eq:CLT-limit-analytic}; since $\Gamma$ is an affine map, the same computation shows that indeed $\Gamma y\in C^0_t H^{-\beta}_x$ whenever $y$ does so.

  Given $y^1,\, y^2\in C^0_t H^{-\beta}_x$, set $y^0=y^1-y^2$, then by Lemma \ref{lem-semigroup}(i) and (iii) it holds
  \begin{equation}\label{prop:CLT-limit-analytic.1}
  \aligned
  & \| (\Gamma y^1-\Gamma y^2)_t\|_{H^{-\beta}}^2 \\
  &=\bigg\| \int_0^t P_{t-s} \Pi \big[ K(y^0_s)\cdot \nabla \tilde{u}_s+ K(\tilde{u}_s)\cdot \nabla y^0_s \big] \,\d s\bigg\|_{H^{-\beta}}^2\\
  & \lesssim \Big(\int_0^t \big\| P_{t-s} \Pi \big[ K(y^0_s) \cdot \nabla \tilde{u}_s \big] \big\|_{H^{-\beta}} \,\d s \Big)^{2} + \int_0^t \| K(\tilde{u}_s)\cdot \nabla y^0_s\|_{H^{-\beta-1}}^2 \,\d s \\
  & \lesssim \Big( \int_0^t \frac{1}{(t-s)^{1-\eps}} \| K(y^0_s)\cdot \nabla \tilde{u}_s\|_{H^{-\beta-2+2\eps}} \,\d s \Big)^{2} + \int_0^t \| K(\tilde{u}_s)\cdot \nabla y^0_s\|_{H^{-\beta-1}}^2 \,\d s,
  \endaligned
  \end{equation}
  for some $\eps = \min\{\frac{1}{2}, \gamma_0 -\frac{d-2}{4}\} >\frac{1}{q}$.
  We denote the last terms on right-hand side by $I^1_t$ and $I^2_t$, respectively.

First, applying Lemma \ref{lem-nD-nonlinearity}-(ii) with $b=\beta+1-2\eps \in (0,\frac{d}{2})$, and noting that $\frac{d}{2} -b=2\eps+\frac{d-2}{2}-\beta \leq 2 \gamma_0 -\beta$, by Poincare inequality we obtain
  \begin{align*}
  I^1_t
  &\lesssim \Big( \int_0^t \frac{1}{(t-s)^{1-\eps}} \| K (y^0_s) \|_{H^{2 \gamma_0-\beta}} \| \tilde{u}_s \|_{L^2} \, \d s \Big)^2
  \lesssim \Big( \int_0^t \frac{1}{(t-s)^{1-\eps}} \| y^0_s \|_{H^{-\beta}} \| \tilde{u}_s \|_{L^2} \, \d s \Big)^2 \\
  &\lesssim  e^{2\lambda a(t)} \|y^0\|_\lambda^2 \|u_0\|_{L^2}^2 \Big( \int_0^t \frac{1}{(t-s)^{1-\eps}} e^{-\lambda (a(t)-a(s))} \, \d s \Big)^2 .
  \end{align*}
Note that $a(t)-a(s) \geq \| u_0 \|_{L^2}^{\eta} (t-s)$; letting $r= \lambda \| u_0\|^{\eta} (t-s)$, we have
  $$ \aligned
  I^1_t & \lesssim e^{2\lambda a(t)} \| y^0\|_\lambda^2 \|u_0\|_{L^2}^2 \Big( \int_0^t \frac{1}{(t-s)^{1-\eps}} e^{-\lambda \| u_0 \|_{L^2}^{\eta} (t-s)} \, \d s \Big)^2 \\
  & \leq e^{2\lambda a(t)} \| y^0\|_\lambda^2 \|u_0\|_{L^2}^2 \big(\lambda \| u_0 \|_{L^2}^{\eta} \big)^{-2\eps} \Big( \int_0^{\infty} r^{\eps-1} e^{-r} \, \d r \Big)^2 .
  \endaligned $$
  Choosing $\eta= \frac{1}{2 \eps}$, we obtain $I^1_t \lesssim e^{2\lambda a(t)} \| y^0\|_\lambda^2 \|u_0\|_{L^2} \lambda^{-2\eps} $.

Next, applying Lemma \ref{lem-nD-nonlinearity}-(i) with $a=1+2\gamma_0>\frac{d}{2},\, b=\beta < \frac{d}{2}$ leads to
  \begin{align*}
  I^2_t
  &\lesssim \int_0^t \| K(\tilde{u}_s)\|_{H^{1+2\gamma_0}}^2 \| y^0_s \|_{H^{-\beta}}^2 \, \d s
  \lesssim \| y^0\|_\lambda^2 \int_0^t e^{2\lambda a(s)} \| \tilde{u}_s\|_{H^1}^2 \, \d s
  \lesssim \frac{\| y^0\|_\lambda^2}{2\lambda}\,  e^{2\lambda a(t)}.
  \end{align*}
Combining the above estimates, multiplying both sides of \eqref{prop:CLT-limit-analytic.1} by $e^{-2\lambda a(t)}$ and taking the supremum over $t\in [0,T]$, we obtain the existence of a constant $C>0$, independent of $\lambda$, such that
  \begin{equation*}
  \|\Gamma y^1- \Gamma y^2 \|_\lambda^2
  \leq C \max\Big\{ \frac{1}{\lambda}, \frac{1}{\lambda^{2\eps}} \Big\} \| y^0\|_\lambda^2
  = C \max\Big\{ \frac{1}{\lambda}, \frac{1}{\lambda^{2\eps}} \Big\} \, \| y^1- y^2 \|_\lambda^2;
  \end{equation*}
contractility of the mapping $\Gamma$ follows by choosing $\lambda$ large enough.

Due to the affine structure of \eqref{eq:CLT-limit-analytic}, $z\mapsto Sz$ is a linear operator. To show the boundedness of this operator, we only need to prove \eqref{eq:CLT-limit-estim}. If $y$ solves \eqref{eq:CLT-limit-analytic}, then for any $t\in [0,T]$ and $q> \max\{2,\frac{4}{4\gamma_0-d+2}\}$, similar to the above estimates, we have
  \begin{align*}
  \| y_t \|_{H^{-\beta}}^q
  & \lesssim \bigg\| \int_0^t P_{t-s} \Pi \big[K(y_s) \cdot \nabla \tilde{u}_s+ K(\tilde{u}_s)\cdot \nabla y_s \big] \,\d s \bigg\|_{H^{-\beta}}^q + \| z_t\|_{H^{-\beta}}^q\\
  & \lesssim \Big( \int_0^t \frac{1}{(t-s)^{1-\eps}} \| y_s \|_{H^{-\beta}} \| \tilde{u}_s \|_{L^2} \, \d s \Big)^q + \Big(\int_0^t \| \tilde{u}_s\|_{H^1}^2 \, \| y_s\|_{H^{-\beta}}^2 \, \d s\Big)^{\frac{q}{2}} + \| z\|_{C^0 H^{-\beta}}^q.
  \end{align*}
  Similar to the proof of Lemma \ref{lem-main-nonlin},
  by H\"older inequality we have
  $$ \| y_t \|_{H^{-\beta}}^q \lesssim \int_0^t \bigg(\|u_0 \|_{L^2}^q +  \Big( \int_0^t \| \tilde{u}_r\|_{H^1}^2 \, \d r \Big)^{\frac{q-2}{2}}  \|\tilde u_s \|_{H^1}^2 \bigg) \|y_s\|_{H^{-\beta}}^q\,\d s + \| z\|_{C^0 H^{-\beta}}^q.$$
  Applying Gronwall's lemma and using the fact $\| \tilde{u}_t\|_{L^2}^2 + C_{d}^{\prime} \kappa \int_0^t \| \nabla \tilde{u}_s\|_{L^2}^2\, \d s \leq \| u_0\|_{L^2}^2$, we get the estimate \eqref{eq:CLT-limit-estim}.
\end{proof}

\begin{corollary}\label{cor:CLT-limit-eq}
  Let $u_0\in H_\sigma$ be fixed, then there exists a unique solution $U$ to \eqref{eq:CLT-limit-eq-mild}, which is given by $U=S \tilde{Z}$. In particular, $U$ is a Gaussian field which satisfies
  $\E\big[\| U \|_{C^0 H^{-\beta}}^q \big]<\infty$ for any $q>\max \{ 2, \frac{4}{4\gamma_0-d+2} \}$ and $\beta> \frac{d}{2}-\gamma$.
\end{corollary}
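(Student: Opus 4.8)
The plan is to assemble Lemma \ref{lem:stoch-convol} and Proposition \ref{prop:CLT-limit-analytic} by a pathwise argument, and then to read off the Gaussianity and the moment bound from the linearity of the solution map $S$.

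First, since $\gamma_0$ and $\gamma$ satisfy \eqref{condition-gamma-0}, one has $\frac d2-\gamma< (2\gamma_0)\wedge\frac d2$, so we may fix an exponent $\beta_0\in\big(\frac d2-\gamma,\,(2\gamma_0)\wedge\frac d2\big)$. By Lemma \ref{lem:stoch-convol}, the stochastic convolution $\tilde Z$ has, up to a $\P$-null set, trajectories in $C^0_tH^{-\beta_0}_x$, and $\E\big[\|\tilde Z\|_{C^0 H^{-\beta_0}}^p\big]<\infty$ for every $p\in[1,\infty)$. For each such trajectory, Proposition \ref{prop:CLT-limit-analytic} applied with $z=\tilde Z(\omega)$ and $\beta=\beta_0$ produces a unique $y\in C^0_tH^{-\beta_0}_x$ solving the analytic equation \eqref{eq:CLT-limit-analytic}, which is precisely the mild formulation \eqref{eq:CLT-limit-eq-mild}. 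Hence $U:=S\tilde Z$ is well defined $\P$-a.s. and is the unique solution of \eqref{eq:CLT-limit-eq-mild}.

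Second, I would check that $U$ is a genuine probabilistically strong solution, i.e. that it is adapted. Inspecting the fixed-point construction in the proof of Proposition \ref{prop:CLT-limit-analytic}, the contraction $\Gamma$ is causal in the sense that $(\Gamma y)_t$ depends only on $y|_{[0,t]}$ and $z|_{[0,t]}$; consequently so does its fixed point, i.e. $(Sz)_t$ depends only on $z|_{[0,t]}$. Since $\tilde Z$ is adapted to $(\mathcal F_t)$ and, by \eqref{eq:CLT-limit-estim}, $S$ is a bounded (hence continuous) linear operator on $C^0_tH^{-\beta_0}_x$, it follows that $U_t=(S\tilde Z)_t$ is $\mathcal F_t$-measurable and that $\omega\mapsto U(\omega)\in C^0_tH^{-\beta_0}_x$ is measurable; standard arguments then identify $U$ as an analytically weak solution of \eqref{CLT-limit-eq}.

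Third, for the Gaussianity I would use that $\tilde u$, being the solution of the deterministic equation \eqref{regular-limit-eq}, is a fixed (non-random) function of $(t,x)$; therefore $\tilde Z_t$ in \eqref{CLT-Z_t} is a Wiener integral of deterministic integrands against the complex Brownian motions $W^{k,i}$, so $\{\tilde Z_t\}_{t\ge0}$ is a centered Gaussian field (every finite family $\langle\tilde Z_{t_j},\phi_j\rangle$ is jointly Gaussian). As $S$ is linear and bounded, $U=S\tilde Z$ is the image of a Gaussian field under a bounded linear map, hence a Gaussian field. Finally, for the stated moment bound: given $\beta>\frac d2-\gamma$, pick the above $\beta_0$ with $\beta_0\le\beta$, so that $\|U_t\|_{H^{-\beta}}\lesssim\|U_t\|_{H^{-\beta_0}}$; then by \eqref{eq:CLT-limit-estim} and Lemma \ref{lem:stoch-convol},
\[
\E\big[\|U\|_{C^0 H^{-\beta}}^q\big]\lesssim \E\big[\|S\tilde Z\|_{C^0 H^{-\beta_0}}^q\big]\lesssim \exp\!\big(qC(1+T)\|u_0\|_{L^2}^q\big)\,\E\big[\|\tilde Z\|_{C^0 H^{-\beta_0}}^q\big]<\infty
\]
for every $q>\max\{2,\tfrac{4}{4\gamma_0-d+2}\}$. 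The argument is a soft assembly of the two preceding results, so I do not expect a serious analytic obstacle; the only point needing genuine care is the causality of the solution map $S$, which is exactly what ensures that $U=S\tilde Z$ is adapted and thus lives on the same filtered probability space as $U^N$ — the feature emphasized before the statement.
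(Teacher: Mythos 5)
Your proof is correct and takes essentially the same route as the paper's: apply Proposition \ref{prop:CLT-limit-analytic} pathwise with $z=\tilde Z(\omega)$, read off Gaussianity from linearity of $S$, and combine \eqref{eq:CLT-limit-estim} with Lemma \ref{lem:stoch-convol} for the $L^q$ bound. You are somewhat more careful than the paper in two places that it leaves implicit: the causality of the fixed-point map (which justifies the phrase ``strong existence'' in the paper's one-line argument), and the extension from the restricted range $\beta\in\big(\tfrac d2-\gamma,(2\gamma_0)\wedge\tfrac d2\big)$ of Proposition \ref{prop:CLT-limit-analytic} to the full range $\beta>\tfrac d2-\gamma$ claimed in the corollary, via the embedding $H^{-\beta_0}\hookrightarrow H^{-\beta}$.
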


\begin{proof}
  Due to Proposition \ref{prop:CLT-limit-analytic}, we can get the strong existence and pathwise uniqueness of the solution to \eqref{eq:CLT-limit-eq-mild}. By Lemma \ref{lem:stoch-convol} and estimate \eqref{eq:CLT-limit-estim}, we know that $\E\big[\| U \|_{C^0 H^{-\beta}}^q \big]<\infty$ for any $q>\max \{ 2, \frac{4}{4\gamma_0-d+2} \}$.
  Because the operator $z \mapsto Sz$ is linear and $\tilde{Z}$ is a Gaussian field, the solution $U= S\tilde{Z}$ is Gaussian.
\end{proof}

\subsection{Proof of CLT} \label{subs-proof-CLT}

In this subsection, we estimate the convergence speed of $U^N$ to $U$ in the case $d=3$. The first condition in \eqref{condition-gamma-0} now reads as $\frac14< \gamma_0 < \frac54$, and we want to keep $\gamma_0$ small; due to the constraint $2\gamma_0 + \gamma>\frac{3}{2}$, we choose $\gamma\in (1,\frac32)$ in the sequel. An added benefit of the condition $\gamma>1$ is that we do not need to deal with different expressions of $D_N$, cf. \eqref{def-D_N}.

\begin{proof}[Proof of Theorem \ref{thm:CLT}]
As before, writing the equation \eqref{CLT-approx-eq} in mild form yields
  \begin{equation} \label{CLT-N}
  \aligned
  U^N_t & =- \int_0^t P_{t-s} \Pi \big[ K(U^N_s) \cdot \nabla u^N_s + K(\tilde{u}_s) \cdot \nabla U^N_s\big]\, \d s \\
  & \quad + \frac{1}{\sqrt{\epsilon_N}} \int_0^t P_{t-s} \Big(S_{\theta^N}^{(3)} (u^N_s)- C_3^{\prime} \kappa \Delta u^N_s\Big)\,\d s + \tilde{Z}^N_t,
  \endaligned
  \end{equation}
where the stochastic convolution $\tilde{Z}_t^N$ is
  \[ \tilde{Z}^N_t:= \sqrt{C_3 \kappa} \sum_{|k| \leq N} \sum_{i=1}^{2} \frac{1}{|k|^{\gamma}} \int_{0}^{t} P_{t-s} \bigl[ \Pi \big(\sigma_{k,i} \cdot \nabla u^N_s \big) \bigr] \d W_{s}^{k,i} .\]
Recall also the mild formulation \eqref{eq:CLT-limit-eq-mild} of the limit equation with the stochastic convolution $\tilde{Z}_t$ defined in \eqref{CLT-Z_t}. Therefore, for any $t\in [0,T]$ and $q>\max \{2,\frac{4}{4\gamma_0-1}\}$, we have
  \begin{align*}
  \E\big[ \|U^N_t -U_t \|_{H^{-\alpha_0}}^q \big] \lesssim I^N_1(t) + I^N_2(t) + I^N_3(t) + I^N_4(t)+I^N_5(t),
  \end{align*}
  where
  \begin{align*}
  I^N_1(t) & = \E\bigg[ \Big\| \int_0^t P_{t-s} \Pi \big[ K(U^N_s -U_s) \cdot \nabla u^N_s \big]\, \d s \Big\|_{H^{-\alpha_0}}^q\bigg], \\
  I^N_2(t) & = \E\bigg[ \Big\| \int_0^t P_{t-s} \Pi \big[ K (\tilde{u}_s) \cdot\nabla (U^N_s -U_s) \big]\, \d s \Big\|_{H^{-\alpha_0}}^q \bigg], \\
  I^N_3(t) & = \E\bigg[ \Big\| \int_0^t P_{t-s} \Pi \big[ K(U_s) \cdot\nabla (u^N_s -\tilde{u}_s) \big]\, \d s \Big\|_{H^{-\alpha_0}}^q \bigg], \\
  I^N_4(t) & = \frac{1}{\epsilon_N^{q/2}} \E\bigg[ \Big\| \int_0^t P_{t-s} \Big(S_{\theta^N}^{(3)} (u^N_s)- C_3^{\prime} \kappa \Delta u^N_s\Big)\,\d s \Big\|_{H^{-\alpha_0}}^q \bigg], \\
  I^N_5(t) & =\E \big[\| \tilde{Z}^N_t-\tilde{Z}_t \|_{H^{-\alpha_0}}^q \big].
  \end{align*}

\textbf{Step 1: preliminary estimates.}
  Set $\eps_1 = \min \{\frac{1}{2},\gamma_0 -\frac{1}{4}\}$; then by Lemma \ref{lem-semigroup}(i) it holds
  \begin{align*}
  I^N_1(t)
  &\lesssim \E\bigg[ \Big( \int_0^t \big\| P_{t-s} \Pi \big[K(U^N_s -U_s) \cdot \nabla u^N_s \big] \big\|_{H^{-\alpha_0}} \,\d s \Big)^q \bigg] \\
  &\lesssim \E\bigg[ \Big( \int_0^t \frac{1}{(t-s)^{1-\eps_1}} \| K(U^N_s-U_s) \cdot \nabla u^N_s \|_{H^{-\alpha_0-2+2\eps_1}} \,\d s \Big)^{q} \bigg].
  \end{align*}
Using Lemma \ref{lem-nD-nonlinearity}-(ii) with $b=\alpha_0+1-2\eps_1 \in (0,\frac{3}{2})$, and noting that $\frac32 -b=2\eps_1+\frac{1}{2}-\alpha_0 \leq 2\gamma_0 -\alpha_0$, by Poincare inequality we obtain
  \begin{align*}
  I^N_1(t)
  &\lesssim \E\bigg[ \Big( \int_0^t \frac{1}{(t-s)^{1-\eps_1}} \| K(U^N_s-U_s) \|_{H^{2\gamma_0-\alpha_0}} \| u^N_s \|_{L^2} \,\d s \Big)^{q} \bigg] \\
  &\lesssim \E\bigg[ \Big( \int_0^t \frac{1}{(t-s)^{1-\eps_1}} \| U^N_s-U_s \|_{H^{-\alpha_0}} \| u^N_s \|_{L^2} \,\d s \Big)^{q} \bigg],
  \end{align*}
Since $q>\max \{2,\frac{4}{4\gamma_0-1}\}$, one has $(1-\eps_1) q' <1$; H\"older's inequality yields
$$ I^N_1(t) \lesssim \| u_0\|_{L^2}^{q} \int_0^t \E \bigl[ \| (U^N_s-U_s) \|_{H^{-\alpha_0}}^q \bigr] \, \d s .$$

The estimate of $I^N_2(t)$ is similar to $I_{1,2,N}(t)$ in the proof of Lemma \ref{lem-main-nonlin}. We use the semigroup property and Lemma \ref{lem-nD-nonlinearity}-(i) with $a=1+2\gamma_0>\frac{3}{2}$ and $b=\alpha_0<\frac{3}{2}$, which gives
\begin{align*}
I^N_2(t) \lesssim \E\bigg[ \Big( \int_0^t \| K(\tilde{u}_s) \cdot \nabla (U^N_s-U_s)\|_{H^{-\alpha_0-1}}^2 \,\d s \Big)^{\frac{q}{2}} \bigg]
\lesssim \E\bigg[ \Big( \int_0^t \| \tilde{u}_s\|_{H^1}^2 \|U^N_s-U_s\|_{H^{-\alpha_0}}^2 \, \d s \Big)^{\frac{q}{2}} \bigg] .
\end{align*}
Choosing $q> 2$, then by H\"older inequality we have
$$ I^N_2(t) \lesssim \| u_0\|_{L^2}^{q-2} \E\bigg[  \int_0^t \| \tilde{u}_s\|_{H^1}^2 \|U^N_s-U_s\|_{H^{-\alpha_0}}^q \, \d s \bigg]. $$
Combining the above estimates yields
\begin{align*}
\E \big[\| U^N_t-U_t\|_{H^{-\alpha_0}}^q \big]
\lesssim & \int_0^t \big(\| u_0\|_{L^2}^q + \| u_0\|_{L^2}^{q-2} \|\tilde{u}_s\|_{H^1}^2 \big) \,\E\big[\| U^N_s-U_s\|_{H^{-\alpha_0}}^q \big]\, \d s \\
& \, + I^N_3(t) + I^N_4(t) +I^N_5(t) ,
\end{align*}
and so by Gronwall's lemma we find
\begin{equation}\label{eq:CLT-proof.1}
\sup_{t\in [0,T]} \E \big[ \| U^N_t-U_t\|_{H^{-\alpha_0}}^q \big]
\lesssim e^{C(1+T)\| u_0\|_{L^2}^q} \sup_{t\in [0,T]} \big(  I^N_3(t) + I^N_4(t) +I^N_5(t)\big).
\end{equation}

\textbf{Step 2: estimate of $I^N_3(t)$.} By the property of semigroup, it holds
  \begin{align*}
    I^N_3(t)
    &\lesssim \E\bigg[ \Big( \int_0^t \big\| P_{t-s} \Pi [K(U_s) \cdot \nabla (u^N_s-\tilde{u}_s)] \big\|_{H^{-\alpha_0}} \,\d s \Big)^q \bigg] \\
    &\lesssim \E\bigg[ \Big( \int_0^t \frac{1}{(t-s)^{1-\eps_2}} \| K(U_s) \cdot \nabla (u^N_s-\tilde{u}_s) \|_{H^{-\alpha_0-2+2\eps_2}} \,\d s \Big)^{q} \bigg],
  \end{align*}
where $\eps_2>0$. Recalling that $\alpha_0 > \frac12$, we can choose $\eps_2$ small enough so that $\alpha_0 -\frac12 -2\eps_2>0$. Let $c=\alpha_0+\gamma-\frac{3}{2}$, by Lemma \ref{lem-nD-nonlinearity}-(iii) with $b=c>\alpha_0-\frac{1}{2}>0$, we have
  \begin{align*}
    I^N_3(t)
    &\lesssim \E\bigg[ \Big( \int_0^t \frac{1}{(t-s)^{1-\eps_2}} \| K(U_s) \|_{H^c} \| u^N_s-\tilde{u}_s \|_{H^{-c}} \,\d s \Big)^{q} \bigg] \\
    &\lesssim  \E\bigg[ \| U \|_{C^0 H^{c-2\gamma_0}}^{q} \| u^N -\tilde{u}\|_{C^0 H^{-c}}^{q} \Big( \int_0^t \frac{1}{(t-s)^{1-\eps_2}} \,\d s \Big)^{q} \bigg] \\
    &\lesssim T^{\eps_2 q}\, \E\big[ \| U \|_{C^0 H^{c-2\gamma_0}}^{2q} \big]^{1/2}\, \E\big[ \| u^N -\tilde{u}\|_{C^0 H^{-c}}^{2q} \big]^{1/2},
  \end{align*}
where in the last passage we used the Cauchy inequality. Note that $2\gamma_0-c=2\gamma_0-\alpha_0 +\frac{3}{2}-\gamma>\frac{3}{2}-\gamma$; applying Corollary \ref{cor:CLT-limit-eq} with $\beta=2\gamma_0-c$, we have $\E\big[ \| U \|_{C^0 H^{c-2\gamma_0}}^{2q} \big]<+\infty$.
  As $c<\alpha_0< (2\gamma_0) \wedge 1$, applying Theorem \ref{thm-main} with $\alpha=c$ and $\delta<c$, we obtain
  $$ \sup_{t\in [0,T]} I^N_3(t) \lesssim \E\big[ \| u^N -\tilde{u}\|_{C^0 H^{-c}}^{2q} \big]^{1/2} \lesssim \| u_0\|_{L^2}^q \epsilon_N^{q(c-\delta)/2} .$$
  Choosing $\delta=\frac{2\eps}{q(3-2\gamma)}<c$ ($\eps$ being the small number in the statement of the theorem) and by the estimate of $\epsilon_N $ (see \eqref{def-epsilon}), we obtain
  \begin{equation}\label{eq:CLT-proof.2}
  \sup_{t\in [0,T]} I^N_3(t) \lesssim \| u_0\|_{L^2}^q  N^{- q \frac{3-2\gamma}{2} (\alpha_0+\gamma- \frac32)+\eps }.
  \end{equation}

\textbf{Step 3: estimate of $ I^N_4(t) $.} By Lemma \ref{lem-semigroup}, for small $\eps_3>0$ it holds
  \begin{align*}
    I^N_4(t)
    & \lesssim \frac{1}{\epsilon_N^{q/2}} \E\bigg[ \Big( \int_0^t \frac{1}{(t-s)^{1-\eps_3}} \big\| S_{\theta^N}^{(3)} (u^N_s)- C_3^{\prime} \kappa \Delta u^N_s \big\|_{H^{-\alpha_0-2+2\eps_3}} \,\d s \Big)^{q} \bigg] \\
    & \lesssim \frac{1}{\epsilon_N^{q/2}} \Big( \int_0^t \frac{1}{(t-s)^{1-\eps_3}} \,\d s \Big)^{q} \E\bigg[  \sup_{0 \leq s \leq T} \big\| S_{\theta^N}^{(3)} (u^N_s)- C_3^{\prime} \kappa \Delta u^N_s \big\|_{H^{-\alpha_0-2+2\eps_3}}^q \bigg] \\
    & \lesssim \E\bigg[ \sup_{0 \leq s \leq T} \| u_s^N\|_{L^2}^q \bigg] \Big(\frac{1}{\sqrt{\epsilon_N}} D_N^{\alpha_0-2\eps_3}\Big)^q,
  \end{align*}
where in the last step we have applied Theorem \ref{thm-Ito-corrector} with $b=0$ and $\alpha=\alpha_0-2\eps_3<1$.
Recalling $D_N$ defined in \eqref{def-D_N}, for $1<\gamma<\frac{3}{2}$ we have $D_N \lesssim \epsilon_N$. Choosing $\eps_3 = \frac{\eps}{2(3-2\gamma)q}$ and by the fact $\sup_{0\leq s \leq T} \|u_s^N\|_{L^2} \leq \|u_0\|_{L^2}$ $\mathbb{P}$-a.s., we obtain
  \begin{equation} \label{eq:CLT-proof.3}
    \sup_{t\in [0,T]} I^N_4(t) \lesssim \|u_0\|_{L^2}^q \epsilon_N^{q(\alpha_0-2\eps_3- \frac12)} \lesssim \| u_0\|_{L^2}^q N^{-q(3-2\gamma)(\alpha_0-\frac{1}{2})+\eps} .
  \end{equation}

\textbf{Step 4: estimate of $I^N_5(t)$.} We split it in two parts: $I^N_5(t)\lesssim J^N_1(t) + J^N_2(t)$ where
  \begin{align*}
  & J^N_1(t):=\E\bigg[\Big\| \sqrt{C_3 \kappa} \sum_{|k| > N} \sum_{i=1}^{2} \frac{1}{|k|^{\gamma}} \int_{0}^{t} P_{t-s} \Pi \bigl[ \sigma_{k,i} \cdot \nabla \tilde{u}_s \bigr] \d W_{s}^{k,i} \Big\|_{H^{-\alpha_0}}^q \bigg], \\
  & J^N_2(t):= \E\bigg[ \Big\| \sqrt{C_3 \kappa} \sum_{|k| \leq N} \sum_{i=1}^{2} \frac{1}{|k|^{\gamma}} \int_{0}^{t} P_{t-s} \Pi \bigl[ \sigma_{k,i} \cdot \nabla \big( u^N_s -\tilde{u}_s\big) \bigr] \d W_{s}^{k,i} \Big\|_{H^{-\alpha_0}}^q \bigg].
  \end{align*}
  By Burkholder-Davis-Gundy's inequality, we have
  \begin{align*}
  J^N_1(t)
  &\lesssim \E\bigg[ \Bigl( \sum_{|k|>N} \sum_{i=1}^{2} |k|^{-2\gamma} \int_0^t \| P_{t-s} \Pi (\sigma_{k,i} \cdot\nabla \tilde{u}_s)\|_{H^{-\alpha_0}}^2\, \d s \Bigr)^{\frac{q}{2}} \bigg] \\
  &\lesssim \E\bigg[ \Bigl( \sum_{|k|>N} \sum_{i=1}^{2} |k|^{-2\gamma} \int_0^t \frac{1}{(t-s)^{1-\eps_4}} \|\sigma_{k,i} \cdot\nabla \tilde{u}_s\|_{H^{-\alpha_0-1+\eps_4}}^2\, \d s \Bigr)^{\frac{q}{2}} \bigg],
  \end{align*}
  where $\eps_4 \in (0,\alpha_0-1/2)$. Noting that $\sigma_{k,i} = a_{k,i} e_k$ is divergence free, for any $r \in \R$ it holds
  \begin{equation}\label{eq:basic-estim-CLT-proof}
  \| \sigma_{k,i} \cdot\nabla \tilde{u}_s\|_{H^{-r-1}}
  = \| \nabla\cdot (\sigma_{k,i} \otimes \tilde{u}_s)\|_{H^{-r-1}}
  \lesssim \| \sigma_{k,i} \otimes \tilde{u}_s \|_{H^{-r}} \lesssim
  \| e_k\, \tilde{u}_s\|_{H^{-r}}.
  \end{equation}
  Letting $r=\alpha_0-\eps_4$, we have
  \begin{align*}
  J^N_1(t)
  &\lesssim \E\bigg[ \Bigl( \sum_{|k|>N} |k|^{-2\gamma} \int_0^t \frac{1}{(t-s)^{1-\eps_4}} \| e_k\, \tilde{u}_s \|_{H^{-\alpha_0+\eps_4}}^2\, \d s \Bigr)^{\frac{q}{2}} \bigg] \\
  &\lesssim \E \bigg[ \Bigl( \sup_{s \in [0,T]} \sum_{|k|>N} |k|^{-2\gamma} \| e_k\, \tilde{u}_s\|_{H^{-\alpha_0+\eps_4}}^2 \Bigr)^{\frac{q}{2}} \bigg].
  \end{align*}
  Letting $\delta_2=\alpha_0-\frac{1}{2}-\eps_4$, then $0<\delta_2<\gamma$ and it holds
  \begin{align*}
  \sum_{|k|>N} |k|^{-2\gamma} \| e_k\, \tilde{u}_s\|_{H^{-\alpha_0+\eps_4}}^2
  & = \sum_{|k|>N} \sum_{l \in \Z_0^d} |k|^{-2\gamma} |2 \pi l|^{-2(\alpha_0-\eps_4)} | \< \tilde{u}_s,e_{k-l}\>|^2 \\
  & \lesssim N^{-2\delta_2} \sum_{k,l \in \Z_0^d} |k|^{-2(\gamma-\delta_2)} |l|^{-2(\alpha_0-\eps_4)} | \< \tilde{u}_s,e_{k-l}\>|^2.
  \end{align*}
As $\gamma-\delta_2+\alpha_0-\eps_4=\gamma+\frac{1}{2}> \frac{3}{2}$, applying Lemma \ref{cor-convolution sum} with $a=\gamma- \delta_2$, $b=\alpha_0-\eps_4$ and $c=0$, we obtain
  $$ J^N_1(t) \lesssim \E \bigg[ \Bigl( \sup_{s \in [0,T]} \sum_{|k|>N} |k|^{-2\gamma} \| e_k\, \tilde{u}_s\|_{H^{-\alpha_0+\eps_4}}^2 \Bigr)^{\frac{q}{2}} \bigg] \lesssim N^{-q \delta_2} \E \bigg[ \sup_{s \in [0,T]} \|\tilde{u}_s\|_{L^2}^{q} \bigg] .$$
Choosing $\eps_4< \frac{\eps}{q}$ (remark that $\eps$ is the small number in the statement of Theorem \ref{thm:CLT}) and by the fact $\sup_{0\leq s \leq T} \|\tilde{u}_s\|_{L^2} \leq \|u_0\|_{L^2}$, we obtain
  \begin{equation}\label{eq:CLT-proof.4}
  \sup_{t\in [0,T]} J^N_1(t) \lesssim N^{-q (\alpha_0- \frac12)+\eps}.
  \end{equation}

We turn to estimating $J^N_2$. By Burkholder-Davis-Gundy's inequality, we have
  \begin{align*}
  J^N_2(t)
  & \lesssim \E\bigg[ \Bigl( \sum_{|k| \leq N} \sum_{i=1}^{2} |k|^{-2\gamma} \int_0^t \big\| P_{t-s} (\sigma_{k,i} \cdot\nabla \big( u^N_s -\tilde{u}_s\big) \big\|_{H^{-\alpha_0}}^2\, \d s \Bigr)^{\frac{q}{2}} \bigg] \\
  &\lesssim \E\bigg[ \Bigl( \sum_{|k|\leq N} \sum_{i=1}^{2} |k|^{-2\gamma} \int_0^t \frac{1}{(t-s)^{1-\eps_5}} \big\|\sigma_{k,i} \cdot\nabla \big( u^N_s -\tilde{u}_s\big) \big\|_{H^{-\alpha_0-1+\eps_5}}^2\, \d s \Bigr)^{\frac{q}{2}} \bigg],
  \end{align*}
where $\eps_5 \in (0,\alpha_0-1/2)$. Similar to the estimate \eqref{eq:basic-estim-CLT-proof}, for $r=\alpha_0-\eps_5$ we have
  \begin{align*}
  J^N_2(t)
  & \lesssim \E\bigg[ \Bigl( \sum_{|k| \leq N} |k|^{-2\gamma} \int_0^t \frac{1}{(t-s)^{1-\eps_5}} \big\| e_k\, \big( u^N_s -\tilde{u}_s \big) \big\|_{H^{-\alpha_0+\eps_5}}^2\, \d s \Bigr)^{\frac{q}{2}} \bigg] \\
  & \lesssim \E \bigg[ \Bigl( \sup_{s \in [0,T]} \sum_{k} |k|^{-2\gamma} \big\| e_k\, \big( u^N_s -\tilde{u}_s \big) \big\|_{H^{-\alpha_0+\eps_5}}^2 \Bigr)^{\frac{q}{2}} \bigg] \\
  & = \E \bigg[ \Bigl( \sup_{s \in [0,T]} \sum_{k,l \in \Z_0^d} |k|^{-2\gamma} |2 \pi l|^{-2(\alpha_0-\eps_5)} |\< u^N_s -\tilde{u}_s,e_{k-l} \>|^2 \Bigr)^{\frac{q}{2}} \bigg].
  \end{align*}
Let $\delta_3=\alpha_0-\frac{1}{2}-\eps_5<\gamma$. As $\gamma+\alpha_0-\eps_{5}-\delta_3=\gamma+\frac{1}{2}>\frac{3}{2}$, applying Lemma \ref{cor-convolution sum} with $a=\gamma$, $b=\alpha_0-\eps_{5}$ and $c=\delta_3$, we get
  $$J^N_2(t) \lesssim \E \bigl[ \| u^N-\tilde{u}\|_{C^0 H^{-\delta_3}}^q \bigr].$$
Let $\eps_5 <\frac{2 \eps}{q(3-2\gamma)}$. Applying Theorem \ref{thm-main} with $\alpha=\delta_3< \frac12 <(2\gamma_0) \wedge 1$ and $\delta = \frac{2 \eps}{q(3-2\gamma)}-\eps_5>0$, we have
  \begin{equation}\label{eq:CLT-proof.5}
    \sup_{t\in [0,T]} J^N_2(t) \lesssim \epsilon_{N}^{q(\delta_3-\delta)/2} \lesssim N^{-q \frac{3-2\gamma}{2}(\alpha_0- \frac12)+\eps}.
  \end{equation}
Due to $\frac{3-2\gamma}{2}<\frac{1}{2}<1$, summarizing estimates \eqref{eq:CLT-proof.4} and \eqref{eq:CLT-proof.5} leads to
  $$\sup_{t\in [0,T]} I^N_5(t) \lesssim N^{- q \frac{3-2\gamma}{2}(\alpha_0- \frac12) +\eps} .$$

\textbf{Step 5: final step.}  Inserting the above estimates into \eqref{eq:CLT-proof.1}, we obtain
  $$ \sup_{t\in [0,T]} \E \big[\| U^N_t-U_t\|_{H^{-\alpha_0}}^q \big] \lesssim e^{C(1+T)\| u_0\|_{L^2}^q} \max \big\{ N^{-q \frac{3-2\gamma}{2}(\alpha_0- \frac12)+\eps}, N^{- q \frac{3-2\gamma}{2}(\alpha_0+\gamma- \frac32) +\eps} \big\}.$$
Due to $\alpha_0-1/2<\alpha_0+\gamma-3/2$, we get \eqref{eq:rate-CLT-3D}.
\end{proof}

\appendix

\section{Existence of weak solutions} \label{sec-appendix-A}

In this section, we first give the definition of weak solutions to \eqref{stoch-NS-eq3} and then prove their existence by using the Galerkin approximation and the compactness method.
\begin{definition}\label{SEE-def}
  We say that \eqref{stoch-NS-eq3} has a weak solution if there exist a filtered probability space $\big( \Omega, \mathcal F, \mathcal F_t, \P\big)$, a sequence of independent $\mathcal F_t$-Brownian motions $\{W^{k,i}\}_{k\in \Z^d_0, \, i=1,\dots,d-1}$ and an $\mathcal F_t$-progressively measurable process $u \in L^2 \big(\Omega,L^2 (0,T; H) \big)$ with $\mathbb{P}$-a.s. weakly continuous trajectories such that for any $\phi \in C^\infty(\T^d;\R^d) \cap \mathcal{H}_{\sigma}$,
  \begin{equation*}
    \<u_t,\phi\> = \<u_0,\phi\> + \int_0^t \<u_s, v_s\cdot \nabla \phi\>\,\d s + \int_0^t \<u_s, S_{\theta}^{(d)}(\phi) \>\,\d s - \sqrt{C_d \kappa} \sum_{k,i} \theta_{k} \int_0^t \<u_s, \sigma_{-k,i} \cdot \nabla \phi\> \,\d W^{k,i}_s
  \end{equation*}
  holds $\P$-a.s. for all $t\in [0,T]$.
\end{definition}

The next result gives the existence of weak solutions to \eqref{stoch-NS-eq3}.

\begin{theorem}\label{thm-existence}
  For any $u_0\in H_{\sigma}$, there exists at least one weak solution to \eqref{stoch-NS-eq3} with trajectories in $L^\infty(0,T; H)$; more precisely,
  \begin{equation}\label{energy estimate thm-existence}
  \sup_{t\in [0,T]} \|u_t \|_{L^2} \leq \|u_0 \|_{L^2}\quad \mathbb{P}\mbox{-a.s.}
  \end{equation}
\end{theorem}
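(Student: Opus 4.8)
The plan is to use the classical Galerkin approximation combined with stochastic compactness (Prokhorov and Skorokhod). For $n\ge 1$ let $\Pi_n$ denote the orthogonal projection of $H_\sigma$ onto the span of $\{e_k:1\le |k|\le n\}$, set $u_0^n=\Pi_n u_0$, and consider the finite-dimensional Stratonovich SDE
\[
  \d u^n + \Pi_n\Pi\big(K(u^n)\cdot\nabla u^n\big)\,\d t = \sqrt{C_d\kappa}\,\sum_{k,i}\theta_k\,\Pi_n\Pi(\sigma_{k,i}\cdot\nabla u^n)\circ\d W^{k,i}_t .
\]
The drift is smooth and locally Lipschitz on the (finite-dimensional) range of $\Pi_n$, so a unique local solution exists; applying It\^o's formula to $\|u^n_t\|_{L^2}^2$ and using the cancellations $\langle K(u^n)\cdot\nabla u^n,u^n\rangle=0$ and $\langle \sigma_{k,i}\cdot\nabla u^n,u^n\rangle=0$, both coming from the divergence-free property (note $K$ maps $H_\sigma$ into $H_\sigma$) and integration by parts, yields the exact identity $\|u^n_t\|_{L^2}=\|u^n_0\|_{L^2}\le \|u_0\|_{L^2}$ for all $t$; in particular the solution is global. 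The It\^o form of this scheme carries a Galerkin Stratonovich--It\^o corrector which converges to $S^{(d)}_\theta(\cdot)$ of \eqref{def-Stratonovich-Ito-correctors} as $n\to\infty$.

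Next I would establish the uniform estimates needed for tightness. Besides the above $L^\infty_tH$ bound, I would write the equation in integral form and estimate, for a fixed large $a>1+\tfrac d2$: the drift by Lemma \ref{lem-nD-nonlinearity}(iii) with $b=0$, giving $\|\Pi(K(u^n)\cdot\nabla u^n)\|_{H^{-1-d/2-\eps}}\lesssim \|K(u^n)\|_{L^2}\|u^n\|_{L^2}\le\|u_0\|_{L^2}^2$; the corrector by the crude bound $\|S^{(d)}_\theta(u^n)\|_{H^{-2}}\lesssim \kappa\|\theta\|_{\ell^2}^2\|u^n\|_{L^2}$ read off directly from \eqref{def-Stratonovich-Ito-correctors}; and the martingale part by the Burkholder--Davis--Gundy inequality in $H^{-1}$, $\E\big[\|M^n_t-M^n_s\|_{H^{-1}}^{2p}\big]\lesssim_p (\kappa|t-s|\|u_0\|_{L^2}^2)^p$. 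These give a uniform bound on $\E\big[\|u^n\|_{C^\vartheta([0,T];H^{-a})}^p\big]$ for some $\vartheta\in(0,1/2)$ and all $p$, so by Chebyshev and the Arzel\`a--Ascoli-type criterion (equicontinuity from the H\"older bound in $H^{-a}$, pointwise precompactness from the compact embedding $H\hookrightarrow\hookrightarrow H^{-a}$) the laws of $\{u^n\}$ are tight in $C([0,T];H^{-a}_\sigma)$.

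By Prokhorov's and Skorokhod's theorems I would then pass, along a subsequence, to a new probability space carrying processes $\bar u^n\to\bar u$ almost surely in $C([0,T];H^{-a}_\sigma)$ with the same laws, together with the driving Brownian motions; the uniform $L^\infty_tH$ bound survives, so for each $t$ we have $\bar u^n_t\rightharpoonup\bar u_t$ weakly in $H$ and $\|\bar u_t\|_{L^2}\le\|u_0\|_{L^2}$ by weak lower semicontinuity. The only delicate point in passing to the limit in the weak formulation of Definition \ref{SEE-def} is the nonlinear term $\int_0^t\langle\bar u^n_s,K(\bar u^n_s)\cdot\nabla\phi\rangle\,\d s$. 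Here I would exploit that $K=(1+(-\Delta)^{\gamma_0})^{-1}$ is a \emph{compact} operator on $H$ for $\gamma_0>0$ (since $H^{2\gamma_0}\hookrightarrow\hookrightarrow H$): weak convergence $\bar u^n_s\rightharpoonup\bar u_s$ in $H$ forces $K(\bar u^n_s)\to K(\bar u_s)$ \emph{strongly} in $H$ for each $s$, and writing $\langle\bar u^n_s-\bar u_s,K(\bar u^n_s)\cdot\nabla\phi\rangle$ as a piece tested against $K(\bar u_s)\cdot\nabla\phi$ (vanishing by weak convergence) plus a piece bounded by $2\|u_0\|_{L^2}\|\nabla\phi\|_\infty\|K(\bar u^n_s)-K(\bar u_s)\|_{L^2}$ yields pointwise-in-$s$ convergence of the integrand; dominated convergence with the constant dominant $\|u_0\|_{L^2}^2\|\nabla\phi\|_\infty$ finishes the time integral. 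The corrector term is linear in $\bar u^n$ and passes by weak convergence, and the stochastic integral converges by the standard argument using the a.s.\ convergence and the uniform moment bounds. Weak continuity in $H$ of $\bar u$ then follows because in the identity of Definition \ref{SEE-def} each term on the right-hand side is continuous in $t$, which upgrades $\|\bar u_t\|_{L^2}\le\|u_0\|_{L^2}$ to hold for all $t$, establishing \eqref{energy estimate thm-existence}. The main obstacle is precisely this nonlinear passage: with no viscous term there is no spatial regularization of $u^n$, so compactness rests solely on the $L^\infty_tH$ bound together with time-equicontinuity in a negative norm, and the convergence of the transport term must be extracted entirely from the smoothing (compactness) of $K$ — which is exactly where the hypothesis $\gamma_0>0$ is used.
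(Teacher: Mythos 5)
Your proposal is correct and follows the same overall strategy as the paper's Appendix~A: Galerkin approximation in $H_N$, uniform $L^\infty_t H$ bound from the pathwise conservation identity $\|u^n_t\|_{L^2}=\|u^n_0\|_{L^2}$, stochastic compactness via Prokhorov and Skorokhod, and identification of the nonlinear limit by exploiting the regularizing property of $K=(1+(-\Delta)^{\gamma_0})^{-1}$. The differences are in the technical implementation, and are worth noting. For tightness, you derive a uniform moment bound in $C^\vartheta([0,T];H^{-a})$ for some $\vartheta\in(0,1/2)$ via Kolmogorov's criterion and conclude by an Arzel\`a--Ascoli argument; the paper instead bounds $\E\int_0^T\|u_N(t)\|_{L^2}^p\,\d t$ together with the fractional time-Sobolev norm in $W^{1/3,4}(0,T;H^{-\beta})$ and invokes Simon's compactness theorem. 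Both are standard routes; yours gets away with a smaller negative index (roughly $a>\max\{1+d/2,2\}$) because you estimate the drift with Lemma~\ref{lem-nD-nonlinearity}(iii), whereas the paper's crude Fourier-mode increment bound forces $\beta>4+d/2$. For the nonlinear term, you use that $K$ is \emph{compact} on $H$, so a.s.\ convergence in $C([0,T];H^{-a})$ together with the uniform $L^\infty_t H$ bound yields $\bar u^n_s\rightharpoonup\bar u_s$ weakly in $H$ for each $s$ and hence $K(\bar u^n_s)\to K(\bar u_s)$ strongly in $H$ pointwise in $s$, after which dominated convergence closes the time integral; the paper states the same gain of derivatives as strong convergence $\tilde v^{N_j}\to\tilde v$ in $C([0,T];H^{2\gamma_0-\delta}_\sigma)$, which is just the time-uniform formulation of the identical fact. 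One cosmetic omission: when you expand the nonlinear difference you only discuss the piece $\langle\bar u^n_s-\bar u_s,K(\bar u^n_s)\cdot\nabla\phi\rangle$; the full difference $\langle\bar u^n_s,K(\bar u^n_s)\cdot\nabla\phi\rangle-\langle\bar u_s,K(\bar u_s)\cdot\nabla\phi\rangle$ has a second residual $\langle\bar u_s,(K(\bar u^n_s)-K(\bar u_s))\cdot\nabla\phi\rangle$, which is controlled by the same $\|K(\bar u^n_s)-K(\bar u_s)\|_{L^2}\to 0$, so nothing breaks but it should be mentioned.
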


The proof of Theorem \ref{thm-existence} is similar to \cite[Theorem 2.2]{FGL21c}, so we only give a sketch here. We begin with introducing some notations. For $N\geq 1$, we define the finite dimensional space $ H_N=\big\{ u \in H_{\sigma} :u=\sum_{|k| \leq N } \hat{u}_k e_k  \big\} $.
Denote by $\Pi_N: H_\sigma \to H_N$ the orthogonal projection: $\Pi_N  u= \sum_{|k|\leq N} \hat{u}_k e_k$; define the linear operator $K_N :H_\sigma \to H_N$ as $K_N u= \Pi_N K(u)$. Let
\[b_N(u) = \Pi_N\big( (K_N u) \cdot \nabla(\Pi_N u) \big), \quad  G_N^{k,i}( u)= \Pi_N\big( \sigma_{k,i} \cdot \nabla(\Pi_N u) \big) \]
for all $k\in \Z^d_0$ and $i=1,2,\cdots,d-1$, and let $S_{\theta,N}^{(d)} : H_\sigma \to H_N $ as
\[ S_{\theta,N}^{(d)}(u)= C_d \kappa \sum_{k,i} \theta_{k}^2 \Pi_N \left[ \sigma_{k,i} \cdot \nabla \Pi_N \left( \sigma_{k,i} \cdot \nabla \Pi_N u \right)  \right] .\]
Note that, for fixed $N$, there are only finitely many $k \in \Z^d_0$ such that $G_N^{k,i}$ is not zero and the sum over $k$ is a finite sum. We shall view $b_N$ and $G_N^{k,i}$ as vector fields on $H_N$ which enjoy the following useful properties:
  \begin{equation}\label{properties}
  \big\< b_N( u_N),  u_N \big\> = \big\< G_N^{k,i}(u_N),  u_N \big\>=0\quad \mbox{for all }  u_N \in H_N,
  \end{equation}
which can be proved easily from the definitions of $b_N$ and $G_N^{k,i}$, and the integration by parts formula.
Consider the finite dimensional version of \eqref{stoch-NS-eq3} on $H_N$:
\begin{equation}\label{SDE}
  \left\{ \aligned
  & \d u_N(t)= -b_N( u_N(t))\,\d t + S_{\theta,N}^{(d)}(u_N(t)) \,\d t + \sqrt{C_d \kappa} \sum_{k,i} \theta_k G_N^{k,i}( u_N(t)) \, \d W^{k,i}_t, \\
  & u_N(0)= \Pi_N u_0, \quad u_0 \in H_{\sigma}.
  \endaligned \right.
\end{equation}

The following result follows directly from classical SDE theory and \eqref{properties}.

\begin{lemma}\label{energy-estimate}
The equation \eqref{SDE} has a unique strong solution $ u_N(t)$ satisfying
  $$\sup_{t\in [0,T]} \| u_N(t) \|_{L^2} \leq \| u_N(0) \|_{L^2} \quad \P \mbox{\rm-}a.s.$$
\end{lemma}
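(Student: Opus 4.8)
The plan is to obtain the unique strong solution locally from classical finite-dimensional SDE theory and then run an $L^2$ energy identity to exclude explosion and get the stated bound. Since $N$ is fixed, the right-hand side of \eqref{SDE} is a finite sum of polynomial vector fields on the finite-dimensional Hilbert space $H_N$: the drift $u\mapsto -b_N(u)+S_{\theta,N}^{(d)}(u)$ is at most quadratic in $u$ ($b_N$ quadratic, $S_{\theta,N}^{(d)}$ linear), while each diffusion coefficient $u\mapsto\sqrt{C_d\kappa}\,\theta_k G_N^{k,i}(u)$ is linear in $u$. In particular all coefficients are smooth and locally Lipschitz with polynomial growth, so standard theory yields a unique maximal strong solution $u_N$ defined on $[0,\tau)$, where $\tau=\lim_{n\to\infty}\tau_n$ and $\tau_n=\inf\{t\ge 0:\|u_N(t)\|_{L^2}\ge n\}$.

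To control $\|u_N\|_{L^2}$ I would rewrite \eqref{SDE} in Stratonovich form,
\[ \d u_N(t) = -b_N(u_N(t))\,\d t + \sqrt{C_d\kappa}\sum_{k,i}\theta_k\,G_N^{k,i}(u_N(t))\circ\d W^{k,i}_t, \]
the drift $S_{\theta,N}^{(d)}$ being by construction the It\^o--Stratonovich corrector of the transport terms, hence absorbed. As the Stratonovich differential obeys the ordinary chain rule, on each $[0,\tau_n)$ one gets
\[ \d\|u_N(t)\|_{L^2}^2 = 2\<u_N(t),\d u_N(t)\> = -2\<b_N(u_N(t)),u_N(t)\>\,\d t + 2\sqrt{C_d\kappa}\sum_{k,i}\theta_k\<G_N^{k,i}(u_N(t)),u_N(t)\>\circ\d W^{k,i}_t = 0 \]
by \eqref{properties}. (Equivalently, keeping the It\^o form \eqref{SDE}, It\^o's formula gives the same conclusion: the $b_N$-drift and the martingale part vanish by \eqref{properties}, and a short integration-by-parts computation shows that $2\<u_N,S_{\theta,N}^{(d)}(u_N)\>\,\d t$ cancels the It\^o correction generated by the quadratic covariations of the stochastic integrals, using $[W^{k,i},W^{l,j}]_t=2t\,\delta_{k,-l}\delta_{i,j}$ and the symmetry of $\theta$ and $\{\sigma_{k,i}\}$.) Consequently $\|u_N(t)\|_{L^2}=\|u_N(0)\|_{L^2}=:r_0$ for all $t<\tau_n$ and all $n$; taking $n>r_0$ forces $\tau_n=\infty$, so $\tau=\infty$, the solution is global, and $\sup_{t\in[0,T]}\|u_N(t)\|_{L^2}=r_0\le\|u_N(0)\|_{L^2}$, which is the assertion.

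I do not expect a genuine obstacle here. The two points needing care are: (i) It\^o's formula (or the Stratonovich chain rule) must first be applied on the stopped intervals $[0,\tau_n)$, the exclusion of explosion being a \emph{conclusion} of the energy identity rather than an input to it; and (ii) the bookkeeping of the identity, in particular the fact that $S_{\theta,N}^{(d)}$ is precisely the correction the transport noise contributes to $\d\|u_N\|_{L^2}^2$ under the complex-Brownian-motion normalization \eqref{noise.1}. Once these are settled, global existence and the $L^2$ bound are immediate.
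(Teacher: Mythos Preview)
Your proposal is correct and is exactly the argument the paper has in mind: the paper's proof consists of a single sentence pointing to classical SDE theory and the orthogonality relations \eqref{properties}, and you have simply unpacked this---local well-posedness from smooth (polynomial) coefficients, then the $L^2$ energy identity via the Stratonovich chain rule (or equivalently It\^o's formula plus cancellation of the corrector) combined with \eqref{properties} to rule out explosion and obtain the bound. There is nothing to add.
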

%

Next, we will show that the laws $\eta_N$ of $u_N(\cdot)$ are tight in $C\big([0,T], H_{\sigma}^{-\delta} \big)$ for any $\delta>0$ small enough. The next result about compactness follows from \cite[Corollary 9, p.90]{Simon}.

\begin{theorem} \label{thm-simon}
Let $0<\delta<\beta$ be given, if $p>12(\beta-\delta)/\delta$, then
  $$L^p(0,T; H_{\sigma}) \cap W^{1/3, 4} \big(0,T; H_{\sigma}^{-\beta} \big) \subset C\big( [0,T]; H_{\sigma}^{-\delta} \big)$$
with compact inclusion.
\end{theorem}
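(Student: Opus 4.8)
The plan is to obtain Theorem~\ref{thm-simon} from the Aubin--Lions--Simon compactness criterion \cite[Corollary~9, p.~90]{Simon}, applied with the Gelfand-type triple $X = H_\sigma$, $B = H_\sigma^{-\delta}$, $Y = H_\sigma^{-\beta}$. The first step is to check the two structural hypotheses. The embedding chain $H_\sigma \hookrightarrow H_\sigma^{-\delta} \hookrightarrow H_\sigma^{-\beta}$ holds because $0 > -\delta > -\beta$; the first arrow is \emph{compact} by the compactness of the Sobolev embedding $H_\sigma^{s'}\hookrightarrow H_\sigma^{s}$ for $s' > s$ recalled in Section~\ref{sec-notations-noise} (here with $s'=0$, $s=-\delta$), while the second arrow is merely continuous. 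In addition one has the interpolation inequality
\[
  \| v\|_{H^{-\delta}} \le \| v\|_{H^{-\beta}}^{\delta/\beta}\,\| v\|_{L^2}^{(\beta-\delta)/\beta},\qquad v\in H_\sigma ,
\]
obtained by applying H\"older's inequality with exponents $\beta/\delta$ and $\beta/(\beta-\delta)$ to the Fourier series $\| v\|_{H^{-\delta}}^2 = \sum_{k} |2\pi k|^{-2\delta}|\hat v_k|^2$; thus $B$ lies between $X$ and $Y$ with the $Y$-exponent equal to $\delta/\beta$ and the $X$-exponent equal to $(\beta-\delta)/\beta$.

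Next I would record the time regularity encoded by the Slobodeckij space: since $\tfrac13-\tfrac14=\tfrac1{12}>0$, the one-dimensional Sobolev embedding yields $W^{1/3,4}(0,T;H_\sigma^{-\beta})\hookrightarrow C^{0,1/12}([0,T];H_\sigma^{-\beta})$, so every $u$ in the intersection space has a representative that is $\tfrac1{12}$-H\"older continuous with values in $H_\sigma^{-\beta}$ and the inclusion in the statement is meaningful. The criterion of \cite{Simon} then turns these ingredients into relative compactness in $C([0,T];H_\sigma^{-\delta})$ by an averaging argument: for $u$ of norm $\le M$ in the intersection space, setting $\bar u_h(t)=h^{-1}\int_{I_h(t)}u(\tau)\,\d\tau$ over an interval $I_h(t)\subset[0,T]$ of length $h$ containing $t$, one has $\|u(t)-\bar u_h(t)\|_{H^{-\beta}}\lesssim M\,h^{1/12}$ from the H\"older bound and $\|\bar u_h(t)\|_{L^2}\lesssim M\,h^{-1/p}$ from H\"older's inequality in time, so interpolating the increment $u(t)-\bar u_h(t)$ in $H^{-\delta}$ produces a bound with $h$-exponent
\[
  \frac1{12}\cdot\frac{\delta}{\beta}\;-\;\frac{\beta-\delta}{\beta}\cdot\frac1p ,
\]
which is \emph{strictly positive exactly under the hypothesis} $p>12(\beta-\delta)/\delta$. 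Since for each fixed $h$ the family $\{\bar u_h\}$ is equicontinuous and, by the compact embedding $H_\sigma\hookrightarrow\hookrightarrow H_\sigma^{-\delta}$, pointwise precompact in $H_\sigma^{-\delta}$, the ball of the intersection space is approximated in $C([0,T];H_\sigma^{-\delta})$ by precompact sets and is therefore itself precompact, which is the asserted compact inclusion.

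The step I expect to be the genuine obstacle is the one glossed over above: the averaging estimate degrades on the set of times where $\|u(t)\|_{L^2}$ is large, a set of small Lebesgue measure that need not be avoidable near a given $t$, so the naive bound does not by itself give \emph{uniform} equicontinuity. This is precisely the difficulty handled inside \cite[Corollary~9]{Simon} — typically by first establishing relative compactness in $L^p(0,T;H_\sigma^{-\delta})$ and then upgrading to $C([0,T];H_\sigma^{-\delta})$ using the H\"older-in-time control — and I would simply invoke that result rather than reproduce the argument; the only point specific to our setting that has to be verified is the numerical condition, which, as shown above, is exactly $p>12(\beta-\delta)/\delta$.
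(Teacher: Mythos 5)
Your proposal is correct and follows essentially the same route as the paper, which simply invokes \cite[Corollary~9, p.~90]{Simon} without further comment; you have merely filled in the verification that the triple $H_\sigma\hookrightarrow\hookrightarrow H_\sigma^{-\delta}\hookrightarrow H_\sigma^{-\beta}$, the interpolation inequality with exponent $\theta=\delta/\beta$, and the time-regularity data $p_0=p$, $(s_1,p_1)=(\tfrac13,4)$ combine under Simon's criterion $\theta s_1-(1-\theta)/p_0-\theta/p_1>0$ to give precisely $p>12(\beta-\delta)/\delta$. Your closing remark about uniform equicontinuity is the right instinct, and you correctly defer that subtlety to Simon's result rather than reproving it.
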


To show the tightness of $\{\eta_N\}_{N\geq 1}$, by Theorem \ref{thm-simon}, we need to prove $\{\eta_N\}_{N\geq 1}$ is bounded in $L^p \big(\Omega,L^p(0,T; H_{\sigma}) \big)$ and $L^4 \big(\Omega, W^{1/3,4}(0,T; H^{-\beta}_{\sigma}) \big)$.
It is sufficient to prove, for each $N\geq 1$,
\begin{equation}\label{tightness-estimate}
  \E \int_0^T \|u_N(t)\|_2^p\, \d t + \E \int_0^T\! \int_0^T \frac{\|u_N(t) - u_N(s)\|_{H^{-\beta}}^4}{|t-s|^{7/3}}\,\d t\d s \leq C,
\end{equation}
where $C$ is independent of $N$. Lemma \ref{energy-estimate} shows $\{ u_N(\cdot)\}_{N\geq 1}$ is bounded in $L^p \big(\Omega,L^p(0,T; H_\sigma) \big)$ for any $p>2$:
\begin{equation}\label{key-bound-1}
  \E \int_0^T \| u_N(t) \|_{L^2}^p \,\d t \leq T\| u_N(0) \|_{L^2}^p \leq T\| u_0 \|_{L^2}^p.
\end{equation}
Similar to the calculation in \cite[Lemma 3.4]{FGL21c}, we can prove that there exists a constant $C>0$ such that for any $N\geq 1$, $0\leq s<t\leq T$ and $k\in \Z^d_0$,
  $$\E \left[ \sum_{i=1}^{d-1} |\< u_N(t) -  u_N(s), \sigma_{k,i}\>|^2 \right]^2 \leq C |k|^8 |t-s|^2 .$$
Taking  $\beta>4 +\frac{d}{2} $ and using Cauchy's inequality, we have
  $$\aligned
  \E \big[\| u_N(t) -  u_N(s)\|_{H^{-\beta}}^4 \big] &= \E \Bigg[\sum_{k,i } \frac{|\< u_N(t) -  u_N(s), \sigma_{k,i}\>|^2 }{|2 \pi k|^{2 \beta}} \Bigg]^2\\
  &\leq \bigg(\sum_{k\in \Z^d_0 } \frac{1 } {|2 \pi k|^{2\beta}} \bigg) \Bigg(\sum_{k\in \Z^d_0 } \frac{\E \big[ \sum_{i=1}^{d-1} |\< u_N(t) -  u_N(s), \sigma_{k,i}\>|^2 \big]^2 } {| 2 \pi k|^{2 \beta}} \Bigg) \\
  &\leq  C|t-s|^2 \sum_{k\in \Z^d_0 } \frac1{|k|^{2(\beta -4)}} \leq C'|t-s|^2.
  \endaligned$$
Then we get
\begin{equation}\label{key-bound-2}
  \E \int_0^T\! \int_0^T \frac{\|u_N(t) - u_N(s)\|_{H^{-\beta}}^4}{|t-s|^{7/3}}\,\d t \, \d s \leq C .
\end{equation}

Thus, we have proved \eqref{tightness-estimate} and obtain the tightness of $\{\eta_N\}_{N\geq 1}$ on $ \mathcal{X}:= C\big( [0,T]; H_{\sigma}^{-} \big)$.
We define the Polish space $\mathcal Y:= C\big([0,T], \R^{\Z^d_0 \times (d-1)} \big)$ endowed with the following metric:
$$d_{\mathcal Y}(w,\hat w) = \sup_{t\in [0,T]} \sum_{k,i} \frac{|w_{k,i}(t)-\tilde{w}_{k,i}(t)| \wedge 1}{2^{|k|}} ,\quad w, \tilde{w} \in \mathcal Y,$$

Denote the whole sequence of processes $\big\{ (W^{k,i}_t)_{0\leq t\leq T}: k \in \Z^d_0, \, i=1,2,\cdots,d-1 \big\} \in \mathcal{Y}$ by $W_\cdot= (W_t)_{0\leq t\leq T}$. For any $N\geq 1$, denote by $P_N$ the joint law of $( u_N(\cdot), W_\cdot )$ on $\mathcal X \times \mathcal Y $.
Since the laws $\{ \eta_N \}_{N\in \N}$ is tight on $\mathcal X$, we conclude that $\{ P_N \}_{N\in \N}$ is tight on $\mathcal X \times \mathcal Y$.
By Prohorov theorem and Skorokhod's representation theorem, there exist a probability space $\big(\tilde\Omega, \tilde{\mathcal F}, \tilde \P \big)$, and a subsequence $\{N_j\}_{j \in \N}$ such that
\begin{equation}\label{strong-convergence}
  \tilde\P \mbox{-a.s.}, \quad \tilde  u_{N_j}(\cdot) \mbox{ converge strongly to } \tilde  u(\cdot) \mbox{ in } C([0,T]; H_{\sigma}^{-}),
\end{equation}
where $\big(\tilde  u_{N_j}(\cdot), \tilde W^{N_j}_\cdot \big)_{i\in \N}$ and $\big(\tilde  u(\cdot), \tilde W_\cdot \big)$ are processes on $\tilde \Omega$ with laws $P_{N_j}$ and $P$, respectively.

Let $\tilde{v}_{N_j}=K(\tilde{u}_{N_j}) $ and $\tilde{v}=K(\tilde{u})$. By the property of operator $K$ we have
\begin{equation}\label{strong-convergence-1}
  \tilde\P \mbox{-a.s.}, \quad \tilde  v_{N_j}(\cdot) \mbox{ converge strongly to } \tilde  v(\cdot) \mbox{ in } C([0,T]; H_{\sigma}^{2\gamma_0-}).
\end{equation}
We are going to prove that $\big(\tilde  u(\cdot), \tilde W_\cdot \big)$ is a weak solution to the equation \eqref{stoch-NS-eq3}. The following Lemma gives the property of $\tilde{u}$, which is similar to \cite[Lemma 3.5]{FGL21c}. We omit the proof.

\begin{lemma}\label{lem-bddness}
The process $\tilde u$ has $\tilde{\mathbb{P}}$-a.s. weakly continuous trajectories in $L^2$ and satisfies
\begin{equation}\label{estim-final}
\sup_{t\in [0,T]} \Vert \tilde u(t)\Vert_{L^2}\leq \Vert  u_0\Vert_{L^2}\quad \tilde{\mathbb{P}}\text{-a.s.}
\end{equation}
\end{lemma}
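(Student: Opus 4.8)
The plan is to transfer the pathwise energy bound of Lemma~\ref{energy-estimate} to the Skorokhod copies and then upgrade the $H^{-\delta}_\sigma$-continuity of $\tilde u$ to weak continuity in $L^2$; this follows the scheme of \cite[Lemma~3.5]{FGL21c}. First I would fix a $\tilde{\mathbb P}$-full set $\tilde\Omega_0$ on which the strong convergence \eqref{strong-convergence} holds and on which $\sup_{t\in[0,T]}\|\tilde u_{N_j}(t)\|_{L^2}\le\|u_0\|_{L^2}$ for every $j$. The latter is legitimate: $\tilde u_{N_j}(\cdot)$ has the same law as $u_{N_j}(\cdot)$, so the bound of Lemma~\ref{energy-estimate} together with $\|\Pi_{N_j}u_0\|_{L^2}\le\|u_0\|_{L^2}$ is inherited, and it defines a measurable event of full measure since $t\mapsto\tilde u_{N_j}(t)$ is ($H^{-\delta}_\sigma$-)continuous.

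Next I would prove \eqref{estim-final} pointwise in $t$. Fix $\omega\in\tilde\Omega_0$ and $t\in[0,T]$: the sequence $\{\tilde u_{N_j}(t)\}_j$ is bounded in the Hilbert space $H_\sigma$, hence some subsequence converges weakly in $H_\sigma$ to a limit $\xi$. Testing against smooth divergence-free fields $\psi$ (dense in $H^\delta_\sigma$) and comparing the weak-$L^2$ limit with the $C([0,T];H^{-\delta}_\sigma)$-limit, one has $\langle\xi,\psi\rangle=\lim_j\langle\tilde u_{N_j}(t),\psi\rangle=\langle\tilde u(t),\psi\rangle$, so $\xi=\tilde u(t)$; in particular $\tilde u(t)\in H_\sigma$ and, by weak lower semicontinuity of the $L^2$-norm,
\[
 \|\tilde u(t)\|_{L^2}\le \liminf_{j\to\infty}\|\tilde u_{N_j}(t)\|_{L^2}\le \|u_0\|_{L^2}.
\]
Since $t\in[0,T]$ was arbitrary and $\tilde\Omega_0$ does not depend on $t$, \eqref{estim-final} holds on $\tilde\Omega_0$, and $\tilde u$ takes values in $H_\sigma$ there (the divergence-free and zero-average constraints pass to the limit because $H^{-\delta}_\sigma$ is closed).

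For the weak continuity, on $\tilde\Omega_0$ the trajectory $t\mapsto\tilde u(t)$ is continuous into $H^{-\delta}_\sigma$ and bounded in $L^2$ by $\|u_0\|_{L^2}$. Given $\phi\in L^2(\T^d;\R^d)$, pick a smooth $\phi_\eps$ with $\|\phi-\phi_\eps\|_{L^2}<\eps$; then for $t_n\to t$ in $[0,T]$,
\[
 |\langle \tilde u(t_n)-\tilde u(t),\phi\rangle|
 \le \|\tilde u(t_n)-\tilde u(t)\|_{L^2}\,\|\phi-\phi_\eps\|_{L^2}
 + \|\tilde u(t_n)-\tilde u(t)\|_{H^{-\delta}}\,\|\phi_\eps\|_{H^\delta}
 \le 2\|u_0\|_{L^2}\,\eps + \|\tilde u(t_n)-\tilde u(t)\|_{H^{-\delta}}\,\|\phi_\eps\|_{H^\delta},
\]
and the last term vanishes as $n\to\infty$ by \eqref{strong-convergence}; letting then $\eps\to0$ gives $\langle\tilde u(t_n),\phi\rangle\to\langle\tilde u(t),\phi\rangle$, i.e. $\tilde u$ is weakly continuous in $L^2$.

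There is no serious analytic difficulty in this lemma: the only point that must be handled with care is the measurability bookkeeping — working on a single full-measure set carrying both \eqref{strong-convergence} and the uniform Galerkin energy bounds — together with the elementary observation that on a fixed bounded ball of $L^2$ the weak $L^2$-topology and the $H^{-\delta}_\sigma$-topology agree on sequences, which is precisely what forces the weak $L^2$-limit of $\tilde u_{N_j}(t)$ to coincide with its $H^{-\delta}_\sigma$-limit $\tilde u(t)$.
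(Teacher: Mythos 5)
Your proof is correct and follows the standard scheme that the paper itself refers to (the paper omits the proof, citing \cite[Lemma 3.5]{FGL21c}): pass the Galerkin energy bound to the Skorokhod copies via equality in law, identify the weak $L^2$-limit of $\tilde u_{N_j}(t)$ with the $H^{-\delta}_\sigma$-limit $\tilde u(t)$ by testing against a dense set of smooth fields, obtain \eqref{estim-final} by weak lower semicontinuity, and upgrade to weak $L^2$-continuity by interpolating the uniform $L^2$-bound against the $C^0_t H^{-\delta}_x$ regularity. The measurability bookkeeping you emphasize (working on a single full-measure event carrying both \eqref{strong-convergence} and the countably many energy bounds) is exactly the right point to be careful about, and your identification step correctly shows every weak subsequential limit equals $\tilde u(t)$, so no diagonal extraction over $t$ is needed.
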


Finally, we can give the proof of Theorem \ref{thm-existence}.

\begin{proof}[Proof of Theorem \ref{thm-existence}]
The processes $\big(\tilde  u_{N_j}(\cdot), \tilde W^{N_j}_\cdot \big)$ on $\big(\tilde\Omega, \tilde{\mathcal F}, \tilde \P \big)$ have the same laws with that of $( u_{N_j}(\cdot), W_\cdot )$, which satisfy the equation \eqref{SDE} with $N$ replaced by $N_j$. Some classical arguments show that the stochastic integrals involved below make sense, see e.g. \cite[Section 2.6, p.89]{Krylov}. For fixed $L\in \N$, for any $N_j>L$ and $\phi\in H_L$, one has,  $\tilde \P$-a.s for all $t\in [0,T]$,
\begin{equation}\label{proof-0}
  \aligned
  \big\< \tilde u_{N_j}(t),\phi \big\> =&\, \big\<  u_{N_j}(0), \phi \big\> +\int_0^t \big\< \tilde u_{N_j}(s), \tilde v_{N_j}(s)\cdot \nabla \phi \big\>\,\d s + \int_0^t \big\<\tilde u_{N_j}(s), S_{\theta,N_{j}}^{(d)}(\phi) \big\>\,\d s \\
  & -\sqrt{C_d \kappa} \sum_{k,i} \theta_k \int_0^t \big\<\tilde u_{N_j}(s), \sigma_{-k,i}\cdot \nabla \phi \big\>\,\d\tilde W^{N_j,k,i}_s.
  \endaligned
\end{equation}

Except for the Stratonovich-It\^o corrector, the convergence estimates of the other parts are similar to those in \cite[Theorem 2.2]{FGL21c}. For the corrector,
$$\aligned
&\ \E_{\tilde \P} \bigg[\sup_{t\in [0,T]}\bigg| \int_0^t \big\<\tilde u_{N_j}(s), S_{\theta,N_{j}}^{(d)}(\phi) \big\>\,\d s - \int_0^t \big\<\tilde u(s), S_{\theta}^{(d)}(\phi) \big\>\,\d s \bigg| \bigg] \\
\leq &\ \E_{\tilde \P} \bigg[\sup_{t\in [0,T]}\bigg| \int_0^t \big\<\tilde u_{N_j}(s), S_{\theta,N_{j}}^{(d)}(\phi) \big\>\,\d s - \int_0^t \big\<\tilde u_{N_j}(s), S_{\theta}^{(d)}(\phi) \big\>\,\d s \bigg| \bigg]\\
&\, + \E_{\tilde \P} \bigg[\sup_{t\in [0,T]}\bigg| \int_0^t \big\<\tilde u_{N_j}(s), S_{\theta}^{(d)}(\phi) \big\>\,\d s - \int_0^t \big\<\tilde u(s), S_{\theta}^{(d)}(\phi) \big\>\,\d s \bigg| \bigg].
\endaligned$$
It is easy to see $S_{\theta,N_{j}}^{(d)}(\phi) \to S_{\theta}^{(d)}(\phi)$ as $j \to \infty$; and by the $L^2(0,T; H_{\sigma})$ boundedness of ${\tilde{u}_{N_j}}$, the first term on the right-hand side vanishes as $j \to \infty$.
For the second term, using \eqref{strong-convergence} and the bounds in Lemma \ref{energy-estimate}, we know that the quantity in the square bracket tends to 0 $\tilde\P$-a.s.; by Lemma \ref{lem-bddness} and the dominated convergence theorem, the second term tends 0 as $j \rightarrow +\infty$.

Similar to the proof of \cite[Theorem 2.2]{FGL21c}, letting $j \rightarrow +\infty$ in \eqref{proof-0} and for any $\phi \in H_{L}$ we obtain, $\P$-a.s. for all $t\in [0,T]$,
\begin{equation*}
  \<\tilde{u}_t,\phi\> = \<u_0,\phi\> + \int_0^t \<\tilde{u}_s, \tilde{v}_s\cdot \nabla \phi\>\,\d s + \int_0^t \<\tilde{u}_s, S_{\theta}^{(d)}(\phi) \>\,\d s - \sqrt{C_d \kappa} \sum_{k,i} \theta_{k} \int_0^t \<\tilde{u}_s, \sigma_{-k,i} \cdot \nabla \phi\> \,\d \tilde{W}^{k,i}_s.
\end{equation*}
By the arbitrariness of $L \in \N$ and another limit argument, we can prove the above identity also holds for any $\phi \in C^\infty(\T^d;\R^d) \cap H_{\sigma}$.
\end{proof}

\section{Convergence rate of Stratonovich-It\^o correctors} \label{sec-appendix-B}

This section is devoted to the proof of Theorem \ref{thm-Ito-corrector}.
Based on \cite[Section 5]{FL21} and \cite[Appendix]{Luo21b}, we will demonstrate the convergence of the Stratonovich-It\^o correction term for the choice of coefficients in \eqref{theta-N-def}.
First, we give some notations that will be used in this section.

Let $\Pi^\perp$ be the operator which is orthogonal to the Leray projection $\Pi$. If $X$ is a general vector field, then, formally,
\begin{equation}\label{Leray-proj-1}
    \Pi^\perp X = \nabla \Delta^{-1} \div(X).
\end{equation}
On the other hand, if $X= \sum_{l\in \Z^d_0} \sum_{i=1}^{d-1} X_{l,i} e_l$, $X_{l,i} \in \mathbb C^d$, then
\begin{equation}\label{Leray-proj-2}
    \Pi^\perp X= \sum_{l,i} \frac{l\cdot X_{l,i}}{|l|^2} l e_l = \nabla\bigg[ \frac1{2\pi {\rm i}} \sum_{l,i} \frac{l\cdot X_{l,i}}{|l|^2} e_l \bigg].
\end{equation}
By the discussions in \cite[Section 5]{FL21} and \cite[Appendix]{Luo21b}, we have
\begin{equation}\label{decompositions}
    S_\theta^{(d)}(v) = \kappa\Delta v - C_d \kappa \sum_{k,i} \theta_k^2\, \Pi\big[ \sigma_{k,i} \cdot\nabla \Pi^\perp (\sigma_{-k,i} \cdot\nabla v) \big].
\end{equation}
We shall denote the second term on the right-hand side by $S_\theta^{(d),\perp} (v)$.
We assume the divergence free vector field $v \in \mathcal{H}_{\sigma}$ has the Fourier expansion
  $$v= \sum_{l \in \Z^d_0} \sum_{j=1}^{d-1} v_{l,j}\, \sigma_{l,j}, $$
where the coefficients $\{v_{l,j} \} \subset \mathbb C$ satisfy $\overline{v_{l,j}}= v_{-l,j}$.

For any $k,l\in \Z^d$, we define $\angle_{k,l}$ as the angle between $k$ and $l$.
To calculate $S_{\theta}^{(d),\perp}$, we need the following identity proved by \cite[Lemma 6.2]{Luo21b} and \cite[Lemma 5.4]{FL21}.

\begin{lemma}\label{lem-append-exress}
  We have
    $$S_\theta^{(d),\perp} (v)= - 4 \pi^2 C_d \kappa \sum_{l,j} v_{l,j} |l|^2 \sum_{i=1}^{d-1} \bigg[ \sum_k \theta_k^2 \sin^2 (\angle_{k,l}) \frac{(a_{l,i}\cdot (k-l))(a_{l,j}\cdot (k-l))}{|k-l|^2} \bigg] \sigma_{l,i}. $$
\end{lemma}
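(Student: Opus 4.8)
The plan is a direct computation in Fourier variables, starting from the expression read off from \eqref{decompositions}, namely $S_\theta^{(d),\perp}(v)=-C_d\kappa\sum_{k,i}\theta_k^2\,\Pi\bigl[\sigma_{k,i}\cdot\nabla\Pi^\perp(\sigma_{-k,i}\cdot\nabla v)\bigr]$, and evaluating the composition of operators acting on a single Fourier mode. Everything reduces to iterating the elementary formulas $\nabla e_l=2\pi\mathrm i\,l\,e_l$, $\sigma_{k,i}=a_{k,i}e_k$, the diagonal action of $\Pi^\perp$ in \eqref{Leray-proj-2}, and the analogous formula for $\Pi$, while constantly using the orthogonality relations $a_{k,i}\cdot k=0$ and $a_{l,j}\cdot l=0$.

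First I would compute the inner transport term: with $v=\sum_{l,j}v_{l,j}\,a_{l,j}\,e_l$ and $a_{-k,i}=a_{k,i}$ one gets $\sigma_{-k,i}\cdot\nabla v=2\pi\mathrm i\sum_{l,j}v_{l,j}(a_{k,i}\cdot l)\,a_{l,j}\,e_{l-k}$. Applying $\Pi^\perp$ on the mode $l-k$ and using $(l-k)\cdot a_{l,j}=-(a_{l,j}\cdot k)$ gives $\Pi^\perp(\sigma_{-k,i}\cdot\nabla v)=-2\pi\mathrm i\sum_{l,j}\tfrac{v_{l,j}(a_{k,i}\cdot l)(a_{l,j}\cdot k)}{|l-k|^2}\,(l-k)\,e_{l-k}$. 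Acting with $\sigma_{k,i}\cdot\nabla$ then multiplies the mode $e_{l-k}$ by $2\pi\mathrm i\,(a_{k,i}\cdot(l-k))=2\pi\mathrm i\,(a_{k,i}\cdot l)$ and shifts it to $e_l$, so $\sigma_{k,i}\cdot\nabla\Pi^\perp(\sigma_{-k,i}\cdot\nabla v)=4\pi^2\sum_{l,j}\tfrac{v_{l,j}(a_{k,i}\cdot l)^2(a_{l,j}\cdot k)}{|l-k|^2}\,(l-k)\,e_l$. Finally, since the $l^\perp$-component of $l-k$ equals $-\sum_{m=1}^{d-1}(a_{l,m}\cdot k)\,a_{l,m}$, one has $\Pi[(l-k)e_l]=-\sum_m(a_{l,m}\cdot k)\,\sigma_{l,m}$, which re-expresses the whole expression as a combination of the $\sigma_{l,m}$.

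Reassembling, summing over $i$ via the classical identity $\sum_{i=1}^{d-1}(a_{k,i}\cdot l)^2=|l|^2-\tfrac{(k\cdot l)^2}{|k|^2}=|l|^2\sin^2(\angle_{k,l})$, rewriting $a_{l,j}\cdot k=a_{l,j}\cdot(k-l)$, $a_{l,m}\cdot k=a_{l,m}\cdot(k-l)$ and $|l-k|=|k-l|$, multiplying by $-C_d\kappa\,\theta_k^2$, summing over $k$, and renaming the dummy index $m$ as $i$, one arrives at the stated formula. As a side check I would also verify that the remaining piece in \eqref{decompositions}, namely $C_d\kappa\sum_{k,i}\theta_k^2\,\Pi[\sigma_{k,i}\cdot\nabla(\sigma_{-k,i}\cdot\nabla v)]$, indeed equals $\kappa\Delta v$; this uses $\|\theta\|_{\ell^2}=1$, the symmetry $\theta_k=\theta_l$ whenever $|k|=|l|$ (which forces $\sum_k\theta_k^2\tfrac{(k\cdot l)^2}{|k|^2}=\tfrac{|l|^2}{d}$), and the normalization $C_d=d/(d-1)$.

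The main obstacle is not conceptual but purely clerical: there are several places where a sign must be handled with care — the $\mathrm i^2$ produced by the two gradients, the sign in the $l^\perp$-component of $l-k$, and the overall sign in \eqref{decompositions} — and one must expand each residual vector in the \emph{correct} orthonormal basis ($\{a_{k,i}\}$ spans $k^\perp$, while $\{a_{l,m}\}$ spans $l^\perp$). For that reason I would carry out the four steps above keeping all indices explicit rather than attempting any shortcut.
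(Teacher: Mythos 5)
The paper itself does not prove this lemma; it just cites Lemma 6.2 of \cite{Luo21b} and Lemma 5.4 of \cite{FL21}, so there is no ``paper's own proof'' to compare against in detail. Your step-by-step Fourier computation is the natural (indeed essentially the only) way to derive the identity, and I have checked each intermediate formula you display: the expression for $\sigma_{-k,i}\cdot\nabla v$, the application of $\Pi^\perp$ using $(l-k)\cdot a_{l,j}=-(a_{l,j}\cdot k)$, the second transport giving the factor $4\pi^2(a_{k,i}\cdot l)^2$, the projection $\Pi[(l-k)e_l]=-\sum_m(a_{l,m}\cdot k)\sigma_{l,m}$, and the identity $\sum_i(a_{k,i}\cdot l)^2=|l|^2\sin^2(\angle_{k,l})$ are all correct. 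The side check on the Laplacian piece is also right.

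There is, however, one genuine sign error, and it is exactly of the clerical type you flagged as the main risk. You read off from \eqref{decompositions} the definition $S_\theta^{(d),\perp}(v)=-C_d\kappa\sum_{k,i}\theta_k^2\,\Pi[\sigma_{k,i}\cdot\nabla\Pi^\perp(\sigma_{-k,i}\cdot\nabla v)]$, including the leading minus. But the paper's ``second term on the right-hand side'' of \eqref{decompositions} is the quantity $C_d\kappa\sum_{k,i}\theta_k^2\,\Pi[\cdots]$ \emph{without} the minus, so that \eqref{decompositions} reads $S_\theta^{(d)}=\kappa\Delta v-S_\theta^{(d),\perp}$. You can verify this from the downstream use in Appendix B: the proof of Theorem \ref{thm-Ito-corrector} controls $\|S_{\theta^N}^{(3),\perp}(v)-\tfrac25\kappa\Delta v\|$ and then invokes \eqref{decompositions} to conclude for $\|S_{\theta^N}^{(3)}(v)-\tfrac35\kappa\Delta v\|$, which only works if $S_\theta^{(3)}-\tfrac35\kappa\Delta v=-(S_\theta^{(3),\perp}-\tfrac25\kappa\Delta v)$, i.e.\ with the no-minus convention. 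Likewise, $\tfrac25\kappa\Delta v=-\tfrac85\pi^2\kappa\sum_{l,j}v_{l,j}|l|^2\sigma_{l,j}$ and $\tfrac{8}{5\cdot 6}=\tfrac{4}{15}$, which reproduces the $-\tfrac{4}{15}\delta_{i,j}$ term in the displayed expansion only under this convention together with the stated $-4\pi^2C_d\kappa$ prefactor. With your extra minus, your (otherwise correct) chain of computations yields $S_\theta^{(d),\perp}(v)=+4\pi^2C_d\kappa\sum_{l,j}v_{l,j}|l|^2\sum_i[\cdots]\sigma_{l,i}$, the negative of the lemma, so your claim that one ``arrives at the stated formula'' does not actually hold as written. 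Dropping the extraneous minus in your starting formula fixes everything.
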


To prove Theorem \ref{thm-Ito-corrector}, we need to deal with the convergence of the coefficients in the Fourier expansions of $S_{\theta^N}^{(d),\perp} (v)$.
This will be done in several steps, and the first one is to prove the following lemma.

\begin{lemma}\label{lem-differ}
For any $l\in \Z^d_0$ and all $N\geq 1$ it holds
    $$\bigg| \sum_k (\theta^N_k)^2 \sin^2 (\angle_{k,l}) \bigg( \frac{(a_{l,i}\cdot (k-l))(a_{l,j}\cdot (k-l))}{|k-l|^2} - \frac{(a_{l,i} \cdot k)(a_{l,j} \cdot k)  }{|k|^2} \bigg) \bigg| \leq 4|l| D_N. $$
\end{lemma}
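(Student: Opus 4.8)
The plan is to prove a \emph{pointwise} estimate in $k$ and then sum against the weights. Since $(\theta^N_k)^2=\epsilon_N|k|^{-2\gamma}\mathbf 1_{\{1\le|k|\le N\}}$, one has $\sum_k(\theta^N_k)^2|k|^{-1}=\epsilon_N\sum_{1\le|k|\le N}|k|^{-2\gamma-1}=D_N$, so it suffices to show that for every $k\in\Z^d_0$,
\[
  \sin^2(\angle_{k,l})\,\Big|\frac{(a_{l,i}\cdot(k-l))(a_{l,j}\cdot(k-l))}{|k-l|^2} - \frac{(a_{l,i}\cdot k)(a_{l,j}\cdot k)}{|k|^2}\Big| \le \frac{4|l|}{|k|},
\]
after which summation over $1\le|k|\le N$ gives the claim. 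When $k$ is parallel to $l$ (in particular $k=l$, where $|k-l|=0$) the left-hand side vanishes: $\sin(\angle_{k,l})=0$, and also $a_{l,i}\cdot k=a_{l,i}\cdot(k-l)=0$ because $a_{l,i},a_{l,j}\perp l$, so no genuine division by zero occurs.

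The structural input I would exploit is precisely $a_{l,i},a_{l,j}\in l^\perp$: this forces $a_{l,i}\cdot(k-l)=a_{l,i}\cdot k$ and likewise for $j$, so the two numerators coincide and the bracketed difference collapses to $(a_{l,i}\cdot k)(a_{l,j}\cdot k)\,\dfrac{|k|^2-|k-l|^2}{|k-l|^2|k|^2}$. Write $k_\perp$ for the orthogonal projection of $k$ onto $l^\perp$. I would record two elementary facts: $|k_\perp|=|k|\sin(\angle_{k,l})$, and $k_\perp=(k-l)_\perp$ (since $l$ has no $l^\perp$-component), whence $|k_\perp|\le|k-l|$; together these give $|k_\perp|^2\le|k|\sin(\angle_{k,l})\,|k-l|$. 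Using $|a_{l,i}\cdot k|\le|k_\perp|$, $|a_{l,j}\cdot k|\le|k_\perp|$ (Cauchy--Schwarz), together with $\big||k|-|k-l|\big|\le|l|$ so that $\big||k|^2-|k-l|^2\big|\le|l|(|k|+|k-l|)$, one arrives at
\[
  \Big|\frac{(a_{l,i}\cdot(k-l))(a_{l,j}\cdot(k-l))}{|k-l|^2} - \frac{(a_{l,i}\cdot k)(a_{l,j}\cdot k)}{|k|^2}\Big| \le |l|\,\sin(\angle_{k,l})\Big(\frac1{|k-l|}+\frac1{|k|}\Big).
\]
Multiplying by $\sin^2(\angle_{k,l})$ and then bounding $\sin(\angle_{k,l})\le|k-l|/|k|$ (which is just $|k_\perp|\le|k-l|$) on the first term and $\sin(\angle_{k,l})\le1$ on the second yields the pointwise bound $2|l|/|k|$, a fortiori $\le 4|l|/|k|$, so that $\big|\sum_k(\cdots)\big|\le\sum_k(\theta^N_k)^2\cdot 2|l|/|k|=2|l|D_N\le 4|l|D_N$.

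I do not anticipate a real obstacle here: the whole statement is a clean consequence of the orthogonality $a_{l,i},a_{l,j}\perp l$ and of the trivial inequality $|k_\perp|\le\min\{|k|\sin(\angle_{k,l}),\,|k-l|\}$. The only care required is bookkeeping, namely handling the degenerate directions $k\parallel l$ separately (where the $\sin^2(\angle_{k,l})$ prefactor absorbs the possible singularity of $|k-l|^{-2}$) and spelling out the two geometric identities for $k_\perp$; no summation device beyond the final crude identity $\sum_k(\theta^N_k)^2|k|^{-1}=D_N$ is needed, and nothing depends on the dimension $d$.
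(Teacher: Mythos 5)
Your proof is correct, but it follows a genuinely different route from the paper's. The paper's argument never uses that $a_{l,i},a_{l,j}\perp l$: it bounds the bracketed difference from above by the operator norm $\bigl\|\frac{(k-l)\otimes(k-l)}{|k-l|^2}-\frac{k\otimes k}{|k|^2}\bigr\|$ (only exploiting $|a_{l,i}|=|a_{l,j}|=1$), writes that matrix difference as $\frac{k-l}{|k-l|}\otimes\bigl(\frac{k-l}{|k-l|}-\frac{k}{|k|}\bigr)+\bigl(\frac{k-l}{|k-l|}-\frac{k}{|k|}\bigr)\otimes\frac{k}{|k|}$, estimates $\bigl|\frac{k-l}{|k-l|}-\frac{k}{|k|}\bigr|\le 2|l|/|k|$, and then simply drops the $\sin^2(\angle_{k,l})$ prefactor by $1$, arriving at the pointwise bound $4|l|/|k|$. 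You instead use the orthogonality $a_{l,i},a_{l,j}\in l^\perp$ at the outset to collapse the two numerators, so the bracketed difference factors as $(a_{l,i}\cdot k)(a_{l,j}\cdot k)\bigl(|k-l|^{-2}-|k|^{-2}\bigr)$, and you then need the prefactor $\sin^2(\angle_{k,l})$ in an essential way (via $|k_\perp|\le\min\{|k|\sin\angle_{k,l},\,|k-l|\}$) to tame the near-singularity when $k$ is close to $l$ in direction. Both arguments then finish identically with $\sum_k(\theta^N_k)^2|k|^{-1}=D_N$. Your route is tied more tightly to the structure of the problem and gives the marginally sharper pointwise constant $2|l|/|k|$; the paper's is slightly more robust, since it would survive even if $a_{l,i}$ were not chosen in $l^\perp$. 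Both are sound and self-contained, and both handle the $k=l$ (and $k\parallel l$) degeneracy consistently.
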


\begin{proof}
We have
    $$\bigg| \frac{(a_{l,i}\cdot (k-l))(a_{l,j}\cdot (k-l))}{|k-l|^2} - \frac{(a_{l,i} \cdot k)(a_{l,j} \cdot k)  }{|k|^2} \bigg| \leq \bigg\|\frac{(k-l)\otimes (k-l)}{|k-l|^2} - \frac{k\otimes k}{|k|^2} \bigg\| , $$
where $\|\cdot \|$ is the matrix norm. Note that
    $$\frac{(k-l)\otimes (k-l)}{|k-l|^2} - \frac{k\otimes k}{|k|^2} = \frac{k-l}{|k-l|} \otimes \bigg(\frac{k-l}{|k-l|} - \frac{k}{|k|} \bigg) + \bigg(\frac{k-l}{|k-l|} - \frac{k}{|k|} \bigg) \otimes \frac{k}{|k|},$$
and thus
    $$\bigg\| \frac{(k-l)\otimes (k-l)}{|k-l|^2} - \frac{k\otimes k}{|k|^2} \bigg\| \leq 2\bigg| \frac{k-l}{|k-l|} - \frac{k}{|k|} \bigg|.$$
Next, since
    $$\frac{k-l}{|k-l|} - \frac{k}{|k|}= \bigg(\frac1{|k-l|} - \frac1{|k|}\bigg)(k-l) - \frac{l}{|k|}, $$
one has
    $$\bigg| \frac{k-l}{|k-l|} - \frac{k}{|k|} \bigg| \leq \frac{\big| |k| -|k-l| \big|}{ |k|} + \frac{|l|}{|k|} \leq 2 \frac{|l|}{|k|}. $$
Combining the above estimates we obtain
    $$\bigg| \frac{(a_{l,i}\cdot (k-l))(a_{l,j}\cdot (k-l))}{|k-l|^2} - \frac{(a_{l,i} \cdot k)(a_{l,j} \cdot k)  }{|k|^2} \bigg| \leq 4\frac{|l|}{|k|} . $$
Finally, recalling the definition of $D_N$, it holds
    $$\sum_{k} \big(\theta^N_k \big)^2 \sin^2(\angle_{k,l}) \bigg| \frac{(a_{l,i}\cdot (k-l))(a_{l,j}\cdot (k-l))}{|k-l|^2} - \frac{(a_{l,i} \cdot k)(a_{l,j} \cdot k)}{|k|^2} \bigg| \leq 4 \sum_{k} \big(\theta^N_k \big)^2 \frac{|l|}{|k|} = 4|l| D_N. $$
\end{proof}

To estimate $\sum_k (\theta^N_k)^2 \sin^2 (\angle_{k,l}) \frac{(a_{l,i}\cdot k)(a_{l,j}\cdot k)}{|k|^2}$, it is natural to convert the summation into an integral. The difference between summation and integral is estimated in the following lemma. We write $\square(k)$ for the unit square or cube centered at $k\in \Z^d$ such that all sides have length 1 and are parallel to the coordinate axes.

\begin{lemma}\label{lem-integral}
Let $\{\theta^N \}_{N\geq 1} \subset \ell^2$ be given as in \eqref{theta-N-def}. Assume $g: \R^d \rightarrow \R$ is a bounded function satisfying
\begin{equation*}
    |g(x)-g(k)| \leq \frac{C_1}{|k|} \quad \mbox{for all } x\in \square(k) \mbox{ and } |k|\ge 1.
\end{equation*}
There exists a constant $C>0$, depending on $C_1$ but independent of $N\geq 1$, such that
    \begin{equation*}
        \bigg| \sum_{k} \big(\theta^N_k \big)^2 g(k) - \epsilon_N \int_{\{1 \leq |x|\leq N\}} \frac{g(x)}{|x|^{2\gamma}} \,\d x \bigg| \leq C D_N.
    \end{equation*}
\end{lemma}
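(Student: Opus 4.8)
The plan is to divide through by $\epsilon_N$ and reduce the statement to a Riemann-sum comparison for the single function $f(x):=g(x)|x|^{-2\gamma}$ on the annulus $R_N:=\{x\in\R^d:1\le|x|\le N\}$. Since $(\theta_k^N)^2=\epsilon_N|k|^{-2\gamma}\mathbf{1}_{\{1\le|k|\le N\}}$ and $D_N=\epsilon_N\sum_{1\le|k|\le N}|k|^{-2\gamma-1}$, the asserted bound is equivalent to
\[
\Bigl|\sum_{1\le|k|\le N}f(k)-\int_{R_N}f(x)\,\d x\Bigr|\lesssim\sum_{1\le|k|\le N}|k|^{-2\gamma-1}.
\]
The first step is the pointwise estimate $|f(x)-f(k)|\lesssim|k|^{-2\gamma-1}$ valid for all $x\in\square(k)$ with $|k|$ not too small. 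Writing $f(x)-f(k)=|x|^{-2\gamma}\bigl(g(x)-g(k)\bigr)+g(k)\bigl(|x|^{-2\gamma}-|k|^{-2\gamma}\bigr)$, the first term is handled by the hypothesis $|g(x)-g(k)|\le C_1/|k|$ together with $|x|\ge|k|-\sqrt d/2\ge|k|/2$ (for $|k|\ge2$, say), and the second by $\sup|g|$ combined with the mean value inequality $\bigl||x|^{-2\gamma}-|k|^{-2\gamma}\bigr|\lesssim|k|^{-2\gamma-1}\bigl||x|-|k|\bigr|$ and $\bigl||x|-|k|\bigr|\le\sqrt d/2$.

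Next I would tile $\R^d$ (up to null sets) by the unit cubes $\square(k)$, $k\in\Z^d$, write $\int_{R_N}f=\sum_{k\in\Z^d}\int_{\square(k)}f\mathbf{1}_{R_N}$, and organize the relevant lattice points into three groups: (a) interior cubes with $\square(k)\subset R_N$; (b) cubes meeting the inner sphere $\{|x|=1\}$; (c) cubes meeting the outer sphere $\{|x|=N\}$. For group (a) one automatically has $1\le|k|\le N$ and $\int_{\square(k)}f\mathbf{1}_{R_N}=\int_{\square(k)}f$, so the contribution is $\sum_{(a)}\int_{\square(k)}\bigl(f(k)-f(x)\bigr)\,\d x$, which by the pointwise estimate is $\lesssim\sum_{1\le|k|\le N}|k|^{-2\gamma-1}$ (the finitely many small-$|k|$ interior cubes excluded from that estimate contribute only $O(1)$ and are lumped with group (b)). Group (b) consists of the $O(1)$ lattice points with $1\le|k|<1+\sqrt d/2$ — note $\sqrt d/2<1$ for $d=2,3$, so $\square(0)$ is disjoint from $R_N$ and causes no trouble — on which $|f|\lesssim1$, for a total contribution of $O(1)$. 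Group (c) consists of lattice points in an annulus of width $\le\sqrt d$ at radius $N$, hence $O(N^{d-1})$ of them (disjointness of cubes plus a volume bound), on each of which $|f(k)|+\int_{\square(k)}|f|\lesssim N^{-2\gamma}$, giving a contribution $O(N^{d-1-2\gamma})$.

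Finally I would check each contribution against $\sum_{1\le|k|\le N}|k|^{-2\gamma-1}$: for (a) this is immediate; for the $O(1)$ of (b) it suffices that $\sum_{1\le|k|\le N}|k|^{-2\gamma-1}\ge\#\{|k|=1\}\gtrsim1$; and for (c) one uses $\sum_{1\le|k|\le N}|k|^{-2\gamma-1}\ge\sum_{N/2\le|k|\le N}|k|^{-2\gamma-1}\ge N^{-2\gamma-1}\cdot\#\{N/2\le|k|\le N\}\gtrsim N^{d-1-2\gamma}$. Multiplying back through by $\epsilon_N$ turns the right-hand side into $D_N$, and the constant depends only on $d,\gamma,C_1$ and $\sup_{\R^d}|g|$. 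The only mildly delicate point is the outer-boundary bookkeeping in group (c) — verifying that the number of cubes there is genuinely $O(N^{d-1})$ for $d=2,3$ and that the resulting $O(N^{d-1-2\gamma})$ error is absorbed by $D_N/\epsilon_N$ uniformly across the three regimes of $\gamma$ in \eqref{def-D_N}; the rest is a routine Euler–Maclaurin-type comparison.
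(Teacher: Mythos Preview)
Your proposal is correct and follows essentially the same approach as the paper: a cube-by-cube Riemann-sum comparison using the pointwise bound $|f(x)-f(k)|\lesssim|k|^{-2\gamma-1}$ on $\square(k)$, followed by a separate treatment of the boundary layer, whose contribution is shown to be $\lesssim D_N/\epsilon_N$. The only organizational difference is that the paper first compares the sum to $\int_{S_N}f$ with $S_N:=\bigcup_{1\le|k|\le N}\square(k)$ and then bounds $\int_{S_N\Delta R_N}|f|$ in one stroke, whereas you group cubes directly into interior, inner-boundary, and outer-boundary classes; the resulting estimates are the same.
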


\begin{proof}
Without losing generality, we assume $\|g\|_{L^\infty} \leq 1$. Let $\theta^N(x)=\sqrt{\epsilon_N}/|x|^{\gamma}$ and $S_N = \bigcup_{1 \leq |k| \leq N} \square(k)$, then
    $$\aligned
      & \bigg|\sum_{k} \big(\theta^N_k \big)^2 g(k) - \int_{S_N} \frac{\epsilon_N}{|x|^{2\gamma}} g(x) \,\d x\bigg| \leq \sum_{1 \leq |k| \leq N} \int_{\square{(k)}} \bigg| (\theta^N(k))^2 g(k) - \frac{\epsilon_N}{|x|^{2\gamma}} g(x) \bigg| \,\d x  \\
       \quad & \leq \sum_{1 \leq |k| \leq N} \int_{\square{(k)}} (\theta^N(k))^2 \big| g(k) - g(x) \big| \,\d x + \sum_{1 \leq |k| \leq N} \int_{\square{(k)}} \bigg| (\theta^N(k))^2-\frac{\epsilon_N}{|x|^{2\gamma}} \bigg| \big| g(x) \big| \,\d x.
      \endaligned
    $$
For all $|k| \geq 1$ and $x\in \square(k)$, we have $|x-k|\leq \sqrt{d}/2$ and $|x|\geq 1/2$, thus
\begin{equation*}
    \bigg| (\theta^N(k))^2-\frac{\epsilon_N}{|x|^{2\gamma}} \bigg| = \epsilon_N \biggl| \frac{1}{|k|^{2\gamma}}-\frac{1}{|x|^{2\gamma}} \biggr| \leq C_2 \frac{\epsilon_N}{|k|^{2\gamma+1}}= C_2 \frac{(\theta^N(k))^2}{|k|}
\end{equation*}
and $|g(x)-g(k)| \leq \frac{C_1}{|k|}$. Then we have
\begin{equation*}
    \bigg|\sum_{k} \big(\theta^N_k \big)^2 g(k) - \epsilon_N  \int_{S_N} \frac{g(x)}{|x|^{2\gamma}} \,\d x\bigg| \leq C_3 \sum_{1 \leq |k| \leq N} \int_{\square{(k)}} \frac{(\theta^N(k))^2}{|k|}  \,\d x = C_3 D_N.
\end{equation*}

Note that there is a small difference between the sets $\{1 \leq |x|\leq N\}$ and $S_N$, but, in the same way, one can show that
    $$\bigg|\int_{\{1 \leq |x|\leq N\}} \frac{g(x)}{|x|^{2\gamma}} \,\d x - \int_{S_N} \frac{g(x)}{|x|^{2\gamma}} \,\d x\bigg| \leq C \frac{D_N}{\epsilon_N} .$$
Indeed, for any $x\in \square(k)$ with $1 \leq |k| \leq N$, one has $\frac{1}{2} \leq |x| \leq N+1$. Therefore,
    $$S_N = \bigcup_{1 \leq |k| \leq N} \square(k) \subset \Big\{ x \in \R^d: \, \frac{1}{2} \leq |x| \leq N+1 \Big\} =: T_N. $$
One also has
    $$R_N:= \{1 \leq |x| \leq N-1 \} \subset S_N.$$
Let $A\Delta B$ be the symmetric difference of subsets $A,B\subset \R^d$; then,
    $$\aligned
      &\bigg|\int_{\{1 \leq |x|\leq N\}} \frac{g(x)}{|x|^{2\gamma}} \,\d x - \int_{S_N} \frac{g(x)}{|x|^{2\gamma}} \,\d x\bigg| \leq \int_{S_N\Delta \{1 \leq |x|\leq N\}} \frac{|g(x)|}{|x|^{2\gamma}} \,\d x  \\
      & \, \leq \int_{S_N\Delta \{1 \leq |x|\leq N\}} \frac{1}{|x|^{2\gamma}} \,\d x \leq \int_{T_N\setminus R_N} \frac{1}{|x|^{2\gamma}} \,\d x
      \leq C_4 \Big(\frac{1}{N^{2\gamma-d+1}} + 1 \Big) \leq C_5 \frac{D_N}{\epsilon_N},
    \endaligned $$
where the last step follows from
    $$\frac{D_N}{\epsilon_N} = \sum_{1 \leq |k| \leq N} \frac1{|k|^{2\gamma+1}} \geq 1+ \frac1{N^{2\gamma+1}}\, \#\{k\in \Z^d_0: 2 \leq |k| \leq N \} \geq 1+\frac{C_6}{N^{2\gamma-d+1}}.$$
It is easy to see that the constants $C_4, C_5$ and $C_6$ depend only on $\gamma$.
\end{proof}

\begin{corollary} \label{corollary-intergral}
Let $\{\theta^N \}_{N\geq 1} \subset \ell^2$ be given as in \eqref{theta-N-def}. There exists a $C>0$, independent of $N\geq 1$ and $l\in \Z^d_0$, such that
  \begin{equation*}
  \bigg| \sum_{k} \big(\theta^N_k \big)^2 \sin^2(\angle_{k,l}) \frac{(a_{l,i}\cdot k)(a_{l,j}\cdot k)}{|k|^2} - \epsilon_N \int_{\{1 \leq |x|\leq N\}} \frac{\sin^2(\angle_{x,l})}{|x|^{2\gamma}} \frac{(a_{l,i}\cdot x)(a_{l,j}\cdot x)}{|x|^2} \,\d x \bigg| \leq C D_N.
  \end{equation*}
\end{corollary}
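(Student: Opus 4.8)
The plan is to deduce this directly from Lemma \ref{lem-integral}, applied to the degree-zero homogeneous function
\[
  g_l(x) := \sin^2(\angle_{x,l})\,\frac{(a_{l,i}\cdot x)(a_{l,j}\cdot x)}{|x|^2},
  \qquad x \in \R^d\setminus\{0\}.
\]
The only thing to verify is that $g_l$ meets the hypotheses of that lemma \emph{with constants that do not depend on $l$}: namely $\|g_l\|_{L^\infty}\le 1$ and $|g_l(x)-g_l(k)|\le C_1/|k|$ for $x\in\square(k)$ and $|k|\ge 1$, with $C_1$ independent of $l$. Granting this, Lemma \ref{lem-integral} produces a constant $C$ depending only on $C_1$ (hence only on $d$), independent of $N$ and $l$, and the stated inequality is precisely its conclusion for $g=g_l$.

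To check the hypotheses I would first record that $g_l$ depends on $x$ only through the direction $u=x/|x|$: writing $m=l/|l|$, one has $\cos(\angle_{x,l})=u\cdot m$, so $g_l(x)=h_l(u)$ with
\[
  h_l(u) = \bigl(1-(u\cdot m)^2\bigr)\,(u\cdot a_{l,i})\,(u\cdot a_{l,j}),
\]
a polynomial of degree $\le 4$ in the components of $u$ whose coefficients are built only from the \emph{unit} vectors $m,a_{l,i},a_{l,j}$. Hence $\sup_{|u|=1}|h_l(u)|\le 1$, giving $\|g_l\|_{L^\infty}\le 1$, and $\sup_{|u|\le 1}|\nabla h_l(u)|\le C_0$ for a universal constant $C_0=C_0(d)$, uniformly in $l$. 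Since for nonzero $x,y$ the segment joining $x/|x|$ and $y/|y|$ lies in the closed unit ball, the mean value inequality yields
\[
  |g_l(x)-g_l(y)| \le C_0\,\bigl|\tfrac{x}{|x|}-\tfrac{y}{|y|}\bigr|
  \le \frac{2C_0\,|x-y|}{\max\{|x|,|y|\}},
\]
the last inequality being $\bigl|\tfrac{x}{|x|}-\tfrac{y}{|y|}\bigr|\le \tfrac{|y|-|x|}{|y|}+\tfrac{|x-y|}{|y|}\le\tfrac{2|x-y|}{|y|}$ when $|x|\le|y|$, together with its symmetric version.

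Finally, for $k\in\Z^d_0$ with $|k|\ge 1$ and $x\in\square(k)$ one has $x\ne 0$, $|x-k|\le\sqrt d/2$ and $\max\{|x|,|k|\}\ge|k|$, so the above gives $|g_l(x)-g_l(k)|\le C_0\sqrt d/|k|=:C_1/|k|$ with $C_1=C_0\sqrt d$ independent of $l$, $k$ and $N$. Lemma \ref{lem-integral} then applies to $g=g_l$ and finishes the proof. The only mildly subtle point is precisely this uniformity in $l$ of the Lipschitz constant, which rests on the fact that $l/|l|$, $a_{l,i}$ and $a_{l,j}$ all lie on the unit sphere, so that the degree-zero function $g_l$ has sphere derivatives bounded independently of $l$; everything else is a routine invocation of Lemma \ref{lem-integral}.
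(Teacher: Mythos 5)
Your proposal is correct and follows the same strategy as the paper: verify that the function $g_l(x)=\sin^2(\angle_{x,l})\,(a_{l,i}\cdot x)(a_{l,j}\cdot x)/|x|^2$ satisfies the hypotheses of Lemma \ref{lem-integral} with constants independent of $l$, and then apply that lemma. The only difference is how the uniform Lipschitz-type bound is checked: the paper splits $g_l$ into the $\sin^2$ factor and the tensor factor and bounds each separately via elementary inequalities ($|\sin^2 a-\sin^2 b|\le 2|a-b|$ and the $\|k\otimes k/|k|^2-x\otimes x/|x|^2\|$ estimate), while you exploit the zero-homogeneity of $g_l$ to write it as a degree-$4$ polynomial $h_l$ in $u=x/|x|$ with coefficients built from the unit vectors $l/|l|,a_{l,i},a_{l,j}$, bound $\sup_{|u|\le 1}|\nabla h_l|$ universally, and conclude by the mean value inequality. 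Both verifications are routine and yield the same conclusion; yours is slightly cleaner in that it makes the source of the uniformity in $l$ (everything is a unit vector) explicit in one line.
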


\begin{proof}
For any $l \in \Z_0^d$ and index $i,j \in \{1,\cdots,d-1\}$, we define the function
    $$g_{l,i,j}(x) = \sin^2(\angle_{x,l}) \frac{(a_{l,i} \cdot x)(a_{l,j} \cdot x)}{|x|^2}, \quad x\in \R^d,\, x\neq 0; $$
clearly, $\|g_{l,i,j} \|_{L^\infty} \leq 1$.
For any $|k|\geq 1$ and $x\in \square(k)$, we have
  $$\aligned
  |g_{l,i,j}(k) -g_{l,i,j}(x)| &\leq |\sin^2(\angle_{k,l}) -\sin^2(\angle_{x,l})| + \bigg| \frac{(a_{l,i}\cdot k)(a_{l,j}\cdot k)}{|k|^2} - \frac{(a_{l,i}\cdot x)(a_{l,j}\cdot x)}{|x|^2} \bigg| \\
  &\leq 2|\sin(\angle_{k,l}) -\sin(\angle_{x,l})| + \bigg\| \frac{k\otimes k}{|k|^2} - \frac{x\otimes x}{|x|^2} \bigg\| \\
  &\leq 2| \angle_{k,l} -\angle_{x,l}| + 2 \bigg| \frac{k}{|k|} - \frac{x}{|x|}\bigg|.
  \endaligned $$
Since $|x-k| \leq \sqrt{d}/2$ and $|k| \geq 1$, we can find a constant $C_1>0$, independent of $l\in \Z^d_0$, indices $i,j \in \{1,\cdots,d-1\}$ and $N\geq 1$, such that
  $$|g_{l,i,j}(k) -g_{l,i,j}(x)| \leq \frac{C_1}{|k|}. $$
Then by Lemma \ref{lem-integral}, we finish the proof.
\end{proof}

Now we are ready to provide the proof of Theorem \ref{thm-Ito-corrector}.

\begin{proof}[Proof of Theorem \ref{thm-Ito-corrector}]
  We will first give detailed proof for the 3D case and then briefly explain the difference between the 2D and 3D proofs latter.

  For $d=3$, by Lemma \ref{lem-append-exress}, we have
    $$ S_\theta^{(3),\perp} (v)= - 6 \pi^2 \kappa \sum_{l,j} v_{l,j} |l|^2 \sum_{i=1}^{2} \bigg[ \sum_k \theta_k^2 \sin^2 (\angle_{k,l}) \frac{(a_{l,i}\cdot (k-l))(a_{l,j}\cdot (k-l))}{|k-l|^2} \bigg] \sigma_{l,i}. $$
  Since $v=\sum_{l,j} v_{l,j} \sigma_{l,j}$ one has
    $$ \kappa \Delta v= -4 \pi^2\kappa \sum_{l,j} v_{l,j} |l|^2 \sigma_{l,j}. $$
  Then we have
    $$\aligned
    & S_{\theta^N}^{(3),\perp} (v) -\frac{2}{5} \kappa \Delta v \\
    & \quad = - 6 \pi^2 \kappa \sum_{l} \sum_{i,j=1}^{2} v_{l,j} |l|^2 \bigg[ \sum_k (\theta_k^N)^2 \sin^2 (\angle_{k,l}) \frac{(a_{l,i}\cdot (k-l))(a_{l,j}\cdot (k-l))}{|k-l|^2} -\frac{4}{15} \delta_{i,j} \bigg] \sigma_{l,i},
    \endaligned $$
  Then fix any $L>0$, we have
    $$\bigg\| S_{\theta^N}^{(3),\perp}(v) -\frac{2}{5} \kappa \Delta v \bigg\|_{H^{b-2-\alpha}}^2 \leq 2 K_{L,1} + 2 K_{L,2},$$
  where ($C=36 \pi^4$)
    $$\aligned
    K_{L,1} &= C\kappa^2 \sum_{|l|> L} \sum_{i,j=1}^{2} |v_{l,j}|^2 |l|^{2(b-\alpha)} \bigg| \sum_k (\theta_k^N)^2 \sin^2 (\angle_{k,l}) \frac{(a_{l,i}\cdot (k-l))(a_{l,j}\cdot (k-l))}{|k-l|^2} -\frac{4}{15} \delta_{i,j} \bigg|^2 ,\\
    K_{L,2} &= C\kappa^2 \sum_{|l|\leq L} \sum_{i,j=1}^{2} |v_{l,j}|^2 |l|^{2(b-\alpha)} \bigg| \sum_k (\theta_k^N)^2 \sin^2 (\angle_{k,l}) \frac{(a_{l,i}\cdot (k-l))(a_{l,j}\cdot (k-l))}{|k-l|^2} -\frac{4}{15} \delta_{i,j} \bigg|^2.
    \endaligned$$
  Since $|a_{l,i}|=|a_{l,j}|=1$ and $\|\theta^N \|_{\ell^2}=1$, one has
    $$\aligned
    K_{L,1} &\leq C\kappa^2 \sum_{|l|> L} \sum_{j=1}^{2}  |v_{l,j}|^2|l|^{2(b-\alpha)} \bigg(\sum_{k} \big(\theta^N_k \big)^2  + \frac{4}{15} \bigg)^2\\
    &\leq C' \kappa^2 \sum_{|l|> L} \sum_{j=1}^{2}  |v_{l,j}|^{2} \frac{|l|^{2b}}{L^{2\alpha}} \leq C' \kappa^2 \frac{1}{L^{2\alpha}} \|v \|_{H^b}^2.
    \endaligned $$
  Next we estimate the second term $K_{L,2}$. We claim that there exists some constant $C>0$ such that for any $l\in \Z^3_0$, $i,j \in \{1,2\}$ and for all $N \geq 1$, it holds
  \begin{equation} \label{calculation-3d}
    \bigg| \sum_{k} \big(\theta^N_k \big)^2 \sin^2(\angle_{k,l}) \frac{(a_{l,i} \cdot (k-l))(a_{l,j} \cdot (k-l))}{|k-l|^2}- \frac{4}{15} \delta_{i,j} \bigg| \leq C D_N |l| .
  \end{equation}
  By this assertion, for $\alpha \in [0,1]$ we have
  $$K_{L,2} \leq C' \kappa^2 L^{2(1-\alpha)} D_N^2 \sum_{|l|\leq L} \sum_{j=1}^{2} |l|^{2b} |v_{l,j}|^2 \leq C' \kappa^2 D_N^2 L^{2(1-\alpha)} \|v \|_{H^b}^2.$$
  Summarizing these estimates and taking $L=D_N^{-1}$, we have
  $$ \bigg\| S_{\theta^N}^{(3),\perp}(v) -\frac{2}{5} \kappa \Delta v \bigg\|_{H^{b-2-\alpha}} \leq C \kappa D_N^{\alpha} \|v \|_{H^b}.$$
  By the identity \eqref{decompositions}, we complete the proof of Theorem \ref{thm-Ito-corrector} in 3D-dimension.

  Now, we turn to prove the assertion \eqref{calculation-3d}. Let
    $$
    J^{(i,j)}_N :=\epsilon_N \int_{\{1 \leq |x|\leq N\}} \frac{\sin^2(\angle_{x,l})}{|x|^{2\gamma}} \frac{(a_{l,i}\cdot x)(a_{l,j}\cdot x)}{|x|^2} \,\d x.
    $$
  By Lemma \ref{lem-differ} and Corollary \ref{corollary-intergral}, we only need to prove $|J^{(i,j)}_N-\frac{4}{15}\delta_{i,j}| \leq C D_N $. By the calculation in \cite[Section 5]{FL21}, $J_N^{(i,j)}$ is independent of $l \in \Z_0^3$ and
  \begin{equation} \label{proof-J_N-3}
    J_N^{(i,j)} = \frac{16 \pi}{15} \epsilon_N \int_1^{N}\frac{\d r}{r^{2\gamma-2}} \delta_{i,j}.
  \end{equation}
And by Lemma \ref{lem-integral} with $g\equiv \frac{4}{15}$, we have
\begin{equation*}
  \bigg| \frac{4}{15} \epsilon_N \int_{\{1 \leq |x|\leq N\}} \frac{\d x}{|x|^{2\gamma}} -\frac{4}{15} \sum_{k} \big(\theta^N_k \big)^2 \bigg|\leq C D_{N}
\end{equation*}
for some constant $C>0$. Equivalently,
  $$ \bigg| \frac{16\pi}{15} \epsilon_N \int_1^{N} \frac{\d r}{r^{2\gamma -2}} -\frac{4}{15} \bigg|\leq C D_N.$$
Recalling \eqref{proof-J_N-3}, we obtain the assertion \eqref{calculation-3d}.

For $d=2$, we have
$$S_{\theta^N}^{(2),\perp} (v) -\frac{3}{4} \kappa \Delta v = - 8 \pi^2 \kappa \sum_{l} v_{l} |l|^2 \bigg[ \sum_k \theta_k^2 \sin^2 (\angle_{k,l}) \frac{(a_{l}\cdot (k-l))^2}{|k-l|^2} -\frac{3}{8} \bigg] \sigma_{l}.$$
By the calculation in \cite[Appendix]{Luo21b}, we have
$$ J_N := \epsilon_N \int_{\{1 \leq |x|\leq N\}} \frac1{|x|^{2\gamma}} \sin^2(\angle_{x,l})  \frac{(a_l\cdot x)^2}{|x|^2} \,\d x=\frac{3\pi}{4} \epsilon_N \int_{1}^{N} \frac{\d r}{r^{2\gamma-1}} .$$
Then, in the same way, we can prove $\big|J_N-\frac{3}{8}\big|<CD_N$. By Lemma \ref{lem-differ} and Corollary \ref{corollary-intergral}, there exists some constant $C>0$ such that for any $l\in \Z^2_0$ and all $N \geq 1$, it holds
$$ \bigg| \sum_{k} \big(\theta^N_k \big)^2 \sin^2(\angle_{k,l}) \frac{(a_l\cdot (k-l))^2}{|k-l|^2}- \frac38 \bigg| \leq C D_N |l| .$$
Similar as in the three-dimensional case, we can also prove
$$ \bigg\| S_{\theta^N}^{(2),\perp} (v) -\frac{3}{4} \kappa \Delta v \bigg\|_{H^{b-2-\alpha}} \leq C \kappa D_N^{\alpha} \|v \|_{H^b}.$$
By the identity \eqref{decompositions}, we finish the proof of Theorem \ref{thm-Ito-corrector} in the 2D case.
\end{proof}

\bigskip

\noindent \textbf{Acknowledgements:} Both authors are very grateful to Professor Zhao Dong for helpful discussions. They would like to thank the financial supports of the National Key R\&D Program of China (No. 2020YFA0712700), the National Natural Science Foundation of China (Nos. 11931004, 12090014). The first named author is also supported by the Youth Innovation Promotion Association, CAS (Y2021002). \smallskip

\noindent \textbf{Conflicts of Interest:} The authors declare no conflicts of interest. \smallskip

\noindent \textbf{Date Availability:} The paper has no data to share.

\end{document}